\definecolor{mygreen}{rgb}{0.01,0.6,0.2}
\definecolor{myblue}{rgb}{0.01, 0.18, 1.0}
\newtheorem{theorem}{Theorem}
\newtheorem{proposition}[theorem]{Proposition}
\newtheorem{lemma}[theorem]{Lemma}
\theoremstyle{definition}
\newtheorem{definition}[theorem]{Definition}
\newtheorem{remark}[theorem]{Remark}
\newtheorem{example}[theorem]{Example}
\numberwithin{equation}{section}
\numberwithin{theorem}{section}
\numberwithin{equation}{section}
\numberwithin{theorem}{section}
\newcommand{\al} {\alpha}
\newcommand{\pa} {\partial}
\newcommand{\be} {\beta}
\newcommand{\de} {\delta}
\newcommand{\De} {\Delta}
\newcommand{\ga} {\gamma}
\newcommand{\Ga} {\Gamma}
\newcommand{\om} {\omega}
\newcommand{\Om} {\Omega}
\newcommand{\la} {\lambda}
\newcommand{\Gr} {\nabla}
\newcommand{\noi} {\noindent}
\newcommand{\ep} {\epsilon}
\newcommand{\ra} {\rightarrow}
\def\t{\tau}
\DeclarePairedDelimiter\abs{\lvert}{\rvert}%
\DeclarePairedDelimiter\norm{\lVert}{\rVert}%
\let\oldnorm\norm
\def\norm{\@ifstar{\oldnorm}{\oldnorm*}}
\newcommand{\Dtwo}{\D^{2,2}(\RN)}
\newcommand{\intRn}{\displaystyle{\int_{\mathbb{R}^N}}}
\newcommand\restr[2]{{% we make the whole thing an ordinary symbol
  \left.\kern-\nulldelimiterspace % automatically resize the bar with \right
  #1 % the function
 % \vphantom{\big|} % pretend it't a little taller at normal size
  \right|_{#2} % this is the delimiter
  }}
\def\t{\tau}
\def\C{{\mathcal C}}
\def\D{{\mathcal D}}
\def\N{{\mathbb N}}
\def\F{{\mathcal F}}
\def\M{{\mathcal M}}
\def\S{\mathbb{S}}
\def\R{{\mathbb R}}
\def\RN{{\mathbb R}^N}
\def\({{\Big(}}
\def\){{\Big)}}
\def\ws2{{\F_{\frac{N}{2}}}}
\def\c1{{\C_c^1}}
\def\d{{\rm d}}
\def\ds{{\rm d}t}
\def\dx{{\rm d}x}
\def\dy{{\rm d}y}
\def\R{{\mathbb R}}
\def\C{{\mathcal C}}
\def\D{{\mathcal D}}
\def\M{{\mathcal M}}
\def\R{{\mathbb R}}
\def\N{{\mathbb N}}
\def\F{{\mathcal F}}
\def\ws2{{\F_{\frac{N}{2}}}}
\def\l2{{ L^{1,\;\infty}(\log L)^2}}
\title[On the fourth order semipositone problem in $\RN$]{On the fourth order semipositone problem in $\RN$}
\author[N. Biswas, U. Das, and A. Sarkar]{Nirjan Biswas$^1$, Ujjal Das$^2$, and Abhishek Sarkar$^{3,*}$}
\subjclass{35J35, 35J91, 35J08, 35B09, 35B65.}
\keywords{Semipositone problem, Biharmonic operator, Mountain-Pass solutions, Riesz potential, Positive solutions}
\thanks{$^*$Corresponding author.}
\begin{document}

\maketitle

\centerline{$^{1}$Tata Institute of Fundamental Research, Centre For Applicable Mathematics,}
\centerline{Post Bag No 6503, Sharada Nagar,}
\centerline{Bangalore 560065, India}
\centerline{$^{2}$Department of Mathematics, Technion - Israel Institute of Technology}
\centerline{Haifa 32000, Israel}
\centerline{$^{3}$Department of Mathematics, Indian Institute of Technology Jodhpur,}
\centerline{Rajasthan 342030, India}

\begin{abstract} 
For $N \geq 5$ and $a>0$, we consider the following semipositone problem
\begin{align*}
\De^2 u=  g(x)f_a(u)  \text {  in  }  \RN, \, \text{ and } \, u \in \D^{2,2}(\RN),\ \ \ \qquad \quad \mathrm{(SP)}
\end{align*}
where $g \in L^1_{loc}(\RN)$ is an indefinite weight function, $f_a:\R \to \R$ is a continuous function that satisfies $f_a(t)=-a$ for $t \in \R^-$, and  $\D^{2,2}(\RN)$ is the completion of $\C_c^{\infty}(\RN)$ with respect to $(\int_{\RN} (\De u )^2)^{1/2}$. For $f_a$ satisfying subcritical nonlinearity and a weaker Ambrosetti-Rabinowitz type growth condition, we find the existence of $a_1>0$ such that for each $a \in (0,a_1)$, (SP) admits a mountain pass solution. Further, we show that the mountain pass solution is positive if $a$ is near zero. For the positivity, we derive uniform regularity estimates of the solutions of (SP) for certain ranges in $(0,a_1)$, relying on the Riesz potential of the biharmonic operator.
\end{abstract} 
%\linenumbers

\section{Introduction }\label{intro}
 The present paper deals with the following fourth-order semilinear problem:  
\begin{align}\label{SP}\tag{SP}
\De^2 u=  g(x)f_a(u)  \text {  in  }  \RN, 
\end{align}
where $N \ge 5$, weight function $g$ is positive, $a>0$ and $f_a \in C(\R)$ is defined as
\begin{align*}
    f_a(t) = \left\{\begin{array}{ll} 
            f(t)-a , & \text {if }  t \ge 0; \\ 
            -a, & \text{if} \; t \le 0,  \\
             \end{array} \right. \text{ where } f \in C(\R^+) \text{ satisfying } f(0)=0.
\end{align*}
Further assumptions on the functions $g$ and $f$ will be stated
later. We look for the solutions of \eqref{SP} in the Beppo-Levi space $\Dtwo$-the completion of $\C_c^{\infty}(\RN)$ with respect to $\Vert \De u \Vert_2 = (\int_{\RN} (\De u )^2)^{1/2}$. A function $u \in \D^{2,2}(\RN)$ is said to be a weak solution of \eqref{SP}, if it satisfies the following identity: 
 \begin{align*}
     \intRn \De u \, \De v \, \dx = \intRn g(x)f_a(u)v \, \dx, \quad \forall \, v \in \D^{2,2}(\RN).
 \end{align*}
Notice that, the term $f_a(u)$ present in the right hand side of the equation \eqref{SP} is negative at $u \le 0$. The problem \eqref{SP} is known as \textit{semipositone} in the literature.

The semipositone problems appear in various contexts in mathematical physics, biology, engineering applications, etc. For example, in the logistic equation, mechanical systems, suspension bridges, chemical reactions, population model; see e.g., \cite{JS2010, OSS02} and the references therein. The semipositone problems were first observed in \cite[Brown and Shivaji]{BS83} while studying the bifurcation theory for the perturbed problem $-\Delta u =\lambda(u-u^3) -\epsilon, u > 0$ in $\Om$, with $\la, \ep >0$. In the literature, the second-order semipositone problems are well studied. Many authors considered the following problem on a bounded domain $\Om$:
\begin{align}\label{GSP}
-\Delta_p u = \lambda f(u)  \text {  in  }  \Om, \; u > 0 \text {  in  }  \Om, \, \text{ and } \, u=0  \text{  on  } \pa \Om,
\end{align}
where $\Delta_p$ is the $p$-Laplace operator defined as $\Delta_p(u) = \text{div}(\abs{\Gr u}^{p-2} \Gr u),$  $\la >0$, $f:\R^+ \ra \R$ is continuous, increasing and $f(0)<0$. The existence of solutions for \eqref{GSP} have been proved under various sufficient conditions on the nonlinearity $f$. For $p=2$, $N=1$ and $\Om= (0,1)$, in \cite{CS88} Castro and Shivaji introduced a sufficient condition, namely, $f \in C^2(\R^+)$, $f$ is superlinear, $f''>0$, and  $f(\al_1)=F(\al_2)=0$ for some $\al_1, \al_2 \in \R^+$, where $F$ is the primitive of $f$ defined as $F(t)=\int_0^t f(s) \, \ds$. For $p=2$ and $N \ge 2$, Castro et al. \cite{CHS95} assumed $f$ is smooth, concave, sublinear and unbounded near infinity.
For general $p$ and $N \ge 2$, \eqref{GSP} have been studied in \cite[Dancer and Zhang]{DZ05} where $f$ is $p$-sublinear, namely $f(t)=o(t^{p-1})$ as $t \ra \infty$, and bounded below by certain function, in \cite[Chhetri et al.]{CDS15} where $f$ is $p$-superlinear, namely $f(t)=O(t^{\ga}); \ga \in (p-1, \frac{N(p-1)}{N-p})$ as $t \ra \infty$. Several authors considered \eqref{GSP} for $f \equiv f(x,u) \in C_{loc}^{\al}(\R^{N+1})$ with $f(x,0)<0$ and imposed sufficient conditions on $f(x,\cdot)$ for the existence of nontrivial solutions. For instance, see \cite[Allegretto et al.]{ANZ92} where $f(x,\cdot)$ is superlinear and subcritical.
 For more existing results related to second-order semipositone problems on a bounded domain, we refer \cite{CFL16, CQT17} and the references therein. In order to obtain the existence of solutions, the standard tools used are topological degree, sub-super solution methods, variational methods, and fixed point theory. The authors mainly utilized the regularity and some versions of the strong maximum principle to get the positive solutions. In general, proving the existence of positive solutions for semipositone problems is challenging.

In \cite{AHS20}, Alves et al. studied the following class of semipositone problems:
\begin{align}\label{SP2}
    -\De u = h(x)(f(u)-a) \text{ in } \R^N,  \text{ and } \, u \in \D^{1,2}(\RN),
\end{align}
where $\D^{1,2}(\RN)$ is the completion of $\C_c^{\infty}(\RN)$ with respect to $\Vert \Gr u \Vert_2$, $f \in C(\R^+)$ is locally Lipschitz that satisfies $f(0)=0$, superlinear growth conditions and \eqref{RA} type nonlinearties, $h \in L^1(\RN) \cap L^{\infty}(\RN)$ is positive, radial and satisfies the following property
\begin{align}\label{Alves1}
 |x|^{N-2} \intRn \frac{h(|y|)}{|x-y|^{N-2}} \, \dy \le C_h \quad \text{for } x \in \RN \setminus \{ 0 \}, \text{ where } C_h>0.
\end{align}
The authors provided a specific range of $a$ for which \eqref{SP2} admits a positive solution. For the existence of solutions, variational methods were used. To get the positivity of solutions, the authors first obtained the regularity and uniform boundedness of the solutions of \eqref{SP2} for $a$ close to zero, using the regularity estimate by Brezis and Kato in \cite{BK1979}, and then used the Riesz potential for \eqref{SP2}, together with the strong maximum principle and \eqref{Alves1}.

Next, we shift our attention to the higher-order semipositone type problems. In the last two decades, the study of higher-order differential elliptic operators got attention from both the mathematical point of view and its applications in science and engineering. For example, in the study of elastic (hinged) bending beam under a possible nonlinear loading \cite{Truesdell91}, in the steady-state prototype equation of phase transition in condensed matter systems \cite{Uzunov1993}, thin-film equation \cite{DPGS2001}, etc., to name a few. In the literature, a few authors studied fourth-order semipositone type problems for the unidimensional case. For example, in \cite{Ma03} where the existence of multiple positive solutions was shown while dealing with beam equations that arise in structural engineering.

Motivated by the fourth-order boundary value problems with real-world applications, we consider \eqref{SP} for both its mathematical essences and also for the aspects of its application. To the best of our knowledge, the higher-order semipositone problem in the higher dimensions has not been considered yet. The main difficulties in studying the positivity of the solutions for such problems with the Dirichlet boundary condition are the lack of maximum principle and the regularity results for weak solutions. On the other hand, fourth-order semipositone problems with Navier boundary condition can be tackled by the study of second order semipositone systems (see \cite{HS2004}).

In this article, we consider the following hypothesis on $f$ and $g$:
\begin{enumerate}[label={($\bf f{\arabic*}$)}]
\setcounter{enumi}{0}
    \item \label{f1} $\displaystyle\lim_{t \rightarrow 0}\frac{f(t)}{t}=0$ and 
    $\displaystyle\lim_{t \rightarrow \infty} \frac{f(t)}{t^{\ga-1}} \le C_f $ for some $\gamma\in \left(2, 2^{**}\right)$ and $C_f>0$,  \vspace{0.2 cm}
    \item \label{f2} $\displaystyle \lim_{t \rightarrow \infty}\frac{f(t)}{t}=\infty$,  \vspace{0.1 cm}
    \item \label{f3}  there exists  $R>0$ such that  $\displaystyle \frac{f(t)}{t}$ is increasing for $t>R$, \vspace{0.2 cm}
    \item \label{f4} $f$ is locally Lipschitz. 
\end{enumerate}
and
\begin{enumerate}[label={($\bf g{\arabic*}$)}]
\setcounter{enumi}{0}
     \item \label{g1} $g \in L^1(\RN) \cap L^{\infty}(\RN)$,  \vspace{0.2 cm}
     \item \label{g2} for $\de = 1$ and $2^{**}$, $|x|^{(N-4)\de} \intRn \frac{g(y)^{\de}}{|x-y|^{(N-4)\de}} \, \dy \le C_g$, where $x \in B_1^c$ and  $C_g>0$,
\end{enumerate}
where $2^{**}=\frac{2N}{N-4}$ and $B_1^c$ is the exterior of the open unit ball in $\RN$ with centre at the origin. In Example \ref{f and g}, we provide $f,g$ satisfying \ref{f1}-\ref{f4} and \ref{g1}-\ref{g2}. Under the above assumptions we prove the following results: 
\begin{theorem}\label{result}
Let $f$ satisfies {\rm \ref{f1}-\ref{f3}}, and let $g$ be positive and satisfies {\rm \ref{g1}}. Then the following hold:

\begin{enumerate}
    \item[(i)] {\rm \textbf{(Existence)}} There exists $a_1>0$ such that for each $a \in (0, a_1)$, \eqref{SP} admits a mountain pass solution $u_a \in C(\RN) \cap L_{loc}^{\infty}(\RN)$.
    \item[(ii)] {\rm \textbf{(Regualrity and uniform boundedness)}}  There exist $a_2 \in (0, a_1)$ and $C>0$, such that
\begin{align*}
    \norm{\De u_a}_{2} \le C, \; \forall \, a \in (0, a_2).
\end{align*}
In addition, if $g$ satisfies {\rm \ref{g2}}, then for every $a \in (0, a_2)$, $u_a \in L^{\infty}(\RN)$. Moreover, there exists $C>0$ such that $$\norm{u_a}_{L^{\infty}(B_1^c)} \le C, \; \forall \, a \in (0, a_2).$$
    \item[(iii)] {\rm \textbf{(Positivity)}} There exists $a_3 \in (0, a_2)$ such that
for every $a \in (0, a_3)$, $u_a \ge 0$ on $\RN$. Furthermore, if $f$ satisfies {\rm \ref{f4}}, then $u_a>0$ a.e. on $\RN$.
\end{enumerate}
\end{theorem}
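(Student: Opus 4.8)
The plan is to treat the three parts of Theorem~\ref{result} in sequence, since each relies on the previous one. For part~(i), the natural route is the mountain pass theorem applied to the energy functional
\begin{align*}
    J_a(u) = \frac12 \intRn (\De u)^2 \, \dx - \intRn g(x) F_a(u) \, \dx,
\end{align*}
where $F_a$ is the primitive of $f_a$ with $F_a(0)=0$. First I would check that $J_a \in C^1(\Dtwo)$ using \ref{g1} together with the subcritical growth \ref{f1} and the embedding $\Dtwo \embd L^{2^{**}}(\RN)$; here the boundedness of $g$ and the fact that $f_a$ grows like $-a$ for negative arguments keep the lower-order term well-defined. The geometry is the main thing to verify: the origin is a strict local minimum of $J_a$ for $a$ small (the linear-in-$a$ defect $a \intRn g u^+$ is controlled by the $\Dtwo$ norm via $g \in L^1 \cap L^\infty$ and Sobolev, while \ref{f1} gives the $o(\norm{\De u}_2^2)$ control near zero), and there is a fixed direction along which $J_a \to -\infty$ thanks to the superlinearity \ref{f2}. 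The Palais–Smale condition is where the weaker Ambrosetti–Rabinowitz-type hypothesis \ref{f3} enters: using the monotonicity of $f(t)/t$ for $t > R$ one obtains the standard boundedness of PS sequences, and then compactness of the embedding on bounded sets (after passing to radial functions, or exploiting the decay of $g$) yields a convergent subsequence. This produces $u_a \in \Dtwo$ with $J_a'(u_a)=0$ and $J_a(u_a) = c_a > 0$; the regularity $u_a \in C(\RN) \cap L^\infty_{loc}(\RN)$ should follow from a Brezis–Kato / bootstrap argument on balls, since locally $g f_a(u_a) \in L^s_{loc}$ for $s$ large enough.

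For part~(ii), the first claim $\norm{\De u_a}_2 \le C$ uniformly in $a \in (0,a_2)$ is obtained by estimating the mountain pass level: one shows $c_a$ stays bounded as $a \to 0^+$ (comparing with the path used in the mountain pass geometry, whose endpoint and the $a$-dependent correction are uniformly controlled), and then combining $J_a(u_a) = c_a$ with $J_a'(u_a) u_a = 0$ and \ref{f3} to extract $\norm{\De u_a}_2^2 \le C(1 + c_a + a)$. The uniform $L^\infty$ bound on $B_1^c$ is the most delicate piece: here I would write $u_a$ via the Riesz potential representation for the biharmonic operator, $u_a(x) = c_N \intRn \abs{x-y}^{-(N-4)} g(y) f_a(u_a(y)) \, \dy$, split the integral using \ref{f1} to dominate $\abs{f_a(u_a)}$ by $C(1 + \abs{u_a}^{2^{**}-1}) + a$, and then apply Hölder with the two exponents $\de = 1$ and $\de = 2^{**}$ built into \ref{g2}. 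The weighted decay estimate \ref{g2}, combined with the already-established uniform $\Dtwo$ bound (hence uniform $L^{2^{**}}$ bound), then gives $\abs{x}^{?}$-type control that is uniform in $a$ and bounded on $B_1^c$; this is precisely the fourth-order analogue of the scheme in \cite{AHS20} built on \eqref{Alves1}.

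For part~(iii), once $\norm{u_a}_{L^\infty(B_1^c)}$ is uniformly small-ish and $\norm{\De u_a}_2$ is uniformly bounded, I would argue that the negative part $u_a^-$ must vanish for $a$ small. The cleanest approach is again the Riesz potential: since $f_a(u_a) = f(u_a) - a$ on $\{u_a \ge 0\}$ and $= -a$ on $\{u_a < 0\}$, one writes $u_a$ as a (positive) potential of $g f(u_a^+)$ minus $a$ times the potential of $g$, and shows the subtracted term is $O(a)$ uniformly (using \ref{g2} with $\de=1$) while the positive term, restricted to where $u_a$ could be negative, is controlled by $\norm{u_a^+}$ which is itself $O(a)$-small near the mountain pass level — forcing $u_a \ge 0$ for $a < a_3$. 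Then, assuming \ref{f4}, $u_a$ solves $\De^2 u_a = g f(u_a) \ge 0$ with $u_a \ge 0$ and $u_a \not\equiv 0$, so strict positivity a.e.\ follows from the strong maximum principle for $\De^2$ (via the positivity-preserving property of the biharmonic Green/Riesz kernel on $\RN$). The main obstacle throughout is part~(ii): making the Riesz-potential estimate genuinely uniform in $a$ down to $a \to 0^+$, and correctly exploiting the two-exponent hypothesis \ref{g2}, is the technical heart of the argument and the step I expect to require the most care.
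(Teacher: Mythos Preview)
Your outline for part~(i) is close to the paper's, but the phrase ``one obtains the standard boundedness of PS sequences'' hides the main difficulty: without the Ambrosetti--Rabinowitz condition there is no standard $\theta>2$ with which to run the identity $J_a(u_j)-\tfrac{1}{\theta}\langle J_a'(u_j),u_j\rangle$, and \ref{f3} alone does not furnish one. The paper instead verifies the Cerami condition and invokes Schechter's mountain-pass theorem; the key technical ingredient is the inequality
\[
    F_a(s)-F_a(t)\le \frac{s^2-t^2}{2s}\,f_a(s)+C_R,\qquad 0<t\le s \text{ or } s\le t<0,
\]
derived from \ref{f3}, which feeds a contradiction argument (normalize $w_j=u_j/\norm{\De u_j}_2$, show $|\{w>0\}|=0$ via \ref{f2}, then compare $I_a$ along the ray $t\mapsto tw_j$). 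The \emph{same} machinery, not a direct estimate, is what gives the uniform bound $\norm{\De u_a}_2\le C$ in part~(ii): your proposed route ``combine $J_a(u_a)=c_a$ with $J_a'(u_a)u_a=0$ and \ref{f3} to extract $\norm{\De u_a}_2^2\le C(1+c_a+a)$'' does not close, because the resulting identity $c_a=\int g\bigl(\tfrac12 f_a(u_a)u_a-F_a(u_a)\bigr)$ only bounds $c_a$ from below, not $\norm{\De u_a}_2$ from above.

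The genuine gap is in part~(iii). The decomposition $u_a=\mathcal{R}_4\ast\bigl(gf(u_a^+)\bigr)-a\,\mathcal{R}_4\ast g$ is correct and the $O(a)$ control on the subtracted term is fine, but the assertion that ``the positive term \ldots\ is controlled by $\norm{u_a^+}$ which is itself $O(a)$-small near the mountain pass level'' is false: the mountain pass solutions satisfy a uniform lower bound $\norm{u_a}_\infty\ge\be_1>0$ (this is exactly Proposition~\ref{lowerbound}), so $u_a^+$ is \emph{not} small. Knowing only that the positive potential is $<Ca$ at a hypothetical point where $u_a<0$ yields no contradiction. The paper's mechanism is different and is the missing idea: take $a_j\to 0$, prove $u_{a_j}\to\tilde u$ both in $\Dtwo$ and in $L^\infty(B_1^c)$ (the latter via the Riesz representation together with \ref{g2}), identify $\tilde u$ as a solution of the limiting \emph{positone} problem $\De^2 u=g f_0(u)$ with $f_0\ge 0$, deduce $-\De\tilde u\ge 0$ and hence $\tilde u>0$ from the strong maximum principle, and finally conclude $u_{a_j}\ge 0$ for large $j$ from the uniform convergence (combined with Egorov on $\overline{B_1}$). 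The passage to the limit $a\to 0$ and the identification of the limit as strictly positive are essential; a direct comparison of the two potential terms at fixed $a$ does not suffice.
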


For the existence of solution of \eqref{SP}, we are not considering the Ambrosetti-Rabinowitz (RA) type nonlinearities for $f$. In \cite{AR73} Ambrosetti and Rabinowitz studied the existence of nontrivial solutions to the problem:
\begin{equation}\label{semilinear}
    -\De u=\la f(x,u) \; \text{ in } \Omega, \;  u=0 \; \text{ on } \partial\Omega,
\end{equation}
assuming the following condition namely: 
\begin{equation}\tag{RA}\label{RA}
    \begin{split}
        \text{there exist } \theta>2 \text{ and } t_0>0 \text{ such that } \\
    0<\theta F(x,t)\le tf(x,t), \; \forall \,  |t|\ge t_0, x \in \Om,
    \end{split}
\end{equation}
where $f(x,t)$ is a continuous function on $\overline{\Om}\times \R$ satisfying $f(x,0)=0$ and certain growth conditions, and $F(x,t)=\int_0^t f(x,s) \, \ds$. The condition \eqref{RA} appears in most of the semilinear elliptic PDEs to show the existence of nontrivial solutions. The advantages of the (RA) type nonlinearities are two-fold. Firstly, it makes sure that the Euler-Lagrangian functional associated with the boundary value problem \eqref{semilinear} has a mountain pass geometry; secondly, it ensures that the Palais-Smale sequence of the Euler-Lagrangian functional is bounded. But this condition is restrictive to some extent in the sense that many nonlinearities such as $f(t)=2t\ln (1+|t|)$ do not satisfy \eqref{RA}. Several authors later replaced \eqref{RA} with the following weaker condition: $$\lim_{|t|\ra \infty}\frac{F(x,t)}{t^2}=+\infty.$$ Among them, we would like to refer the works of \cite[Miyagaki and Souto]{MS08}, \cite[Lan and Tang]{LT2014}, and \cite[Iturriaga et al.]{ILU2010}, where the authors also implemented some monotonicity arguments on $\frac{f(x,t)}{t}$. In this article, the similar assumptions that we imposed are \ref{f2} and \ref{f3}.

Now let us briefly describe the technique used for the existence of positive solutions of \eqref{SP}. One of the key ingredients for proving the positivity is a weaker Brezis-Kato type regularity for the biharmonic operator, which is recently obtained by Medereski and Siemianowski \cite{Medereski}. Using this, we can show that every mountain pass solution of \eqref{SP} is locally bounded and continuous (see Remark \ref{bounded1.1}). The novelty of our approach is to prove the uniform boundedness of the mountain pass solutions $(u_a)$ of \eqref{SP} in $\Dtwo$ for $a$ near $0$, and then employ this along with the Riesz potential of \eqref{SP}, and the condition \ref{g2} to establish that $u_a \in L^{\infty}(\RN)$ for each $a$ close to $0$, and $(u_a)$ is uniformly bounded in $L^{\infty}(B^c_1)$. Applying these, we have shown that $(u_a)$ converges to a function in $L^{\infty}(B^c_1)$, as $a$ goes to $0$, and the limiting function is a positive solution to a positone problem. These help us to get $(u_a)$ is positive.

The rest of this article is organized as follows. In Section \ref{sec1}, we recall the higher-order Beppo-Levi space and discuss various embedding results. Then we briefly describe some properties of the Riesz potential for biharmonic operator. Section \ref{sec2} is devoted to setting up a functional framework for the existence of solutions of \eqref{SP}. Section \ref{sec3} contains the proof of the existence and various qualitative properties of the solutions for \eqref{SP}. In that section, we prove Theorem \ref{result}.

\section{Preliminaries}\label{sec1}
In this section, we define the Beppo-Levi space, discuss various embedding results of $\Dtwo$, and briefly describe some properties of the fundamental solution of the biharmonic operator that we frequently use in the subsequent sections. First, we list some of the notations used in this article. 
\begin{itemize}
    \item $B_R(x)$ is the open ball in $\RN$ with radius $R$ and centre at $x$. $B_R^c(x)$ represents the complement of $B_R(x)$ in $\RN$, i.e., $B_R^c(x) = \RN \setminus B_R(x)$. When $x=0$, we denote $B_R(0)$ as $B_R$ and $B_R^c(0)$ as $B_R^c$.
    \item $K$ is denoted as compact set in $\RN$.
    \item We denote  Riesz constants: $\mathcal{R}_4=\frac{1}{2(N-2)(N-4)\om_N}$ and $\mathcal{R}_2=\frac{1}{(N-2)\om_N}$, where $\omega_N$ is the $(N-1)$-dimensional measure of the Euclidean sphere $\mathbb{S}^N$, i.e., $\omega_N:=\displaystyle\frac{N\pi^{N/2}}{\Gamma\left( 1+\frac{N}{2}\right)}.$
    \item $C$ represents positive constant which may differ from one line to another. 
\end{itemize}

\subsection{Embeddings of $\Dtwo$}
In this subsection, we discuss some classical embeddings of the Beppo-Levi space $\Dtwo$. Recall that, for $m \in \mathbb{N}, 1<p<\infty$ and $N > mp$, the Beppo-Levi space $\mathcal{D}^{m,p}(\RN)$ is defined as:
\begin{align*}
    \mathcal{D}^{m,p}(\RN) := \text{completion of } C_c^{\infty}(\RN) \text{ with respect to } \norm{u}:=\left(\int_{\RN} |\Gr^m u|^p \right)^{\frac{1}{p}},
\end{align*}
where \[ \Gr^m u :=\begin{cases} \De^{m/2} u, \ m \text{ even }\\ \nabla \De^{(m-1)/2}u, \ m \text{ odd. }\end{cases}\]
We note that, when $N \leq mp$, $\mathcal{D}^{m,p}(\mathbb{R}^N)$ may not be a function space (see \cite[Remark 2.2]{Tintarev2007}). For more details on the Beppo-Levi spaces, we refer to the book \cite{Tintarev2007}. In the following proposition, we show that for $N \ge 5$, $\Dtwo$ is continuously embedded in $\D^{1,2^*}(\RN)$ (where $2^*=\frac{2N}{N-2}$) and $W^{2,2}_{loc}(\RN)$. 
\begin{proposition}\label{local}
Let $N \ge 5$. Then 
\begin{enumerate}
    \item[(i)] $\Dtwo \hookrightarrow \D^{1,2^*}(\RN)$,
    \item[(ii)] $\Dtwo \hookrightarrow W^{2,2}_{loc}(\RN)$,
    \item[(iii)] $\Dtwo$ is compactly embedded into $L_{loc}^{\ga}(\RN)$ where $\ga \in [1,2^{**})$.
\end{enumerate}
\end{proposition}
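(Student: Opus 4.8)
The plan is to reduce everything to the classical Sobolev and Gagliardo--Nirenberg inequalities together with Calderón--Zygmund estimates for the Laplacian, exploiting that the norm on $\Dtwo$ is $\norm{\De u}_2$ and that elements are limits of $\C_c^\infty(\RN)$ functions. For part (i), the natural route is: given $u \in \C_c^\infty(\RN)$, first apply the Sobolev inequality to each partial derivative $\pa_i u$ in the form $\norm{\Gr u}_{2^*} \le C \norm{\Gr^2 u}_2$ (valid since $N \ge 3$), and then control $\norm{\Gr^2 u}_2$ by $\norm{\De u}_2$. The latter is the standard $L^2$ identity $\int_{\RN} |\Gr^2 u|^2 = \int_{\RN} (\De u)^2$ for $u \in \C_c^\infty(\RN)$, obtained by integrating by parts twice (no boundary terms because $u$ has compact support). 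Combining, $\norm{\Gr u}_{2^*} \le C \norm{\De u}_2$ for all $u \in \C_c^\infty(\RN)$, and then one passes to the completion: a Cauchy sequence in $\Dtwo$ is Cauchy in $\D^{1,2^*}(\RN)$, and the limits agree because both sit inside, say, $\mathcal{D}'(\RN)$. This gives the continuous inclusion $\Dtwo \embd \D^{1,2^*}(\RN)$.

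For part (ii), fix a ball $B_R$ and a cutoff, or more directly argue as follows. From part (i) we already know $\Gr u \in L^{2^*}(\RN) \subset L^2_{loc}(\RN)$ and, applying Sobolev once more to $u$ itself via $\D^{1,2^*}(\RN) \embd L^{2^{**}}(\RN)$ (since $(2^*)^* = \tfrac{2N}{N-4} = 2^{**}$ when $N \ge 5$), we get $u \in L^{2^{**}}(\RN) \subset L^2_{loc}(\RN)$. Thus $u \in L^2_{loc}(\RN)$ and $\De u \in L^2(\RN)$; interior elliptic regularity (Calderón--Zygmund / the $W^{2,2}_{loc}$ estimate $\norm{u}_{W^{2,2}(B_R)} \le C(\norm{\De u}_{L^2(B_{2R})} + \norm{u}_{L^2(B_{2R})})$) then yields $u \in W^{2,2}_{loc}(\RN)$ with a bound by $\norm{\De u}_2$, giving the continuous local embedding. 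Again this is first established on $\C_c^\infty(\RN)$ and extended by density.

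For part (iii), I would combine part (ii) with the Rellich--Kondrachov theorem. Fix $R>0$; by (ii) the inclusion $\Dtwo \embd W^{2,2}(B_R)$ is continuous, and $W^{2,2}(B_R) \embd\embd L^\ga(B_R)$ compactly for every $\ga$ strictly below the Sobolev conjugate exponent of $W^{2,2}$, which is exactly $2^{**} = \tfrac{2N}{N-4}$; in particular it is compact for all $\ga \in [1, 2^{**})$. Hence a bounded sequence in $\Dtwo$ has a subsequence converging in $L^\ga(B_R)$, i.e. the embedding into $L^\ga_{loc}(\RN)$ is compact. (If one wants a single subsequence working for all of $\RN$ simultaneously in the $L^\ga_{loc}$ sense, run a diagonal argument over an exhaustion $B_1 \subset B_2 \subset \cdots$.)

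The only genuinely delicate point is the identity $\int_{\RN}|\Gr^2 u|^2 = \int_{\RN}(\De u)^2$ and, relatedly, making sure the density/completion argument is legitimate --- namely that the abstract completion $\Dtwo$ really embeds into distributions so that "the limit in $\Dtwo$" and "the limit in $\D^{1,2^*}$" are the same object. Both are standard: the first is a double integration by parts on Schwartz functions, and the second follows because the inequalities proved on $\C_c^\infty(\RN)$ show the identity map $\C_c^\infty(\RN) \to \D^{1,2^*}(\RN)$ (resp. $\to W^{2,2}_{loc}$) is uniformly continuous, hence extends uniquely to the completion, and injectivity into $\mathcal{D}'(\RN)$ is inherited from the target spaces. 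Everything else is a direct application of Sobolev, Calderón--Zygmund, and Rellich--Kondrachov.
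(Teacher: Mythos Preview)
Your proposal is correct and follows essentially the same route as the paper: the Sobolev inequality applied to each $\partial_i u$ combined with the integration-by-parts identity $\int_{\RN}|\Gr^2 u|^2=\int_{\RN}(\De u)^2$ for (i), the chain $\Dtwo\hookrightarrow L^{2^{**}}(\RN)\hookrightarrow L^2_{loc}(\RN)$ together with control of the full Hessian by $\De u$ for (ii), and Rellich--Kondrachov applied after (ii) for (iii). The only cosmetic difference is that in (ii) you package the Hessian bound as an interior Calder\'on--Zygmund estimate, whereas the paper writes out the same inequality by hand via Green's theorem.
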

\begin{proof}
$(i)$ Let $u \in \C_c^{\infty}(\RN)$. Then
\begin{equation*}
    \begin{split}
        \intRn |\Gr u|^{2^*}  =  \intRn \left( \sum_{i=1}^N \left(\frac{\pa u}{\pa x_i}\right)^2 \right)^{\frac{2^*}{2}}  \le 2^{\frac{2}{N-2}} \intRn \sum_{i=1}^N \left| \frac{\pa u}{\pa x_i} \right|^{2^*}.
    \end{split}
\end{equation*}
Hence using the embedding $\D^{1,2}(\RN) \hookrightarrow L^{2^*}(\RN)$ (\cite[Theorem 2.4]{Tintarev2007}), we obtain
\begin{align*}
    \left( \intRn |\Gr u|^{2^*} \right)^{\frac{2}{2^*}} & \le 2^{\frac{2}{N}} \left( \intRn \sum_{i=1}^N \left| \frac{\pa u}{\pa x_i} \right|^{2^*}  \right)^{\frac{2}{2^*}}  \\
    & \le C \left(\sum_{i=1}^N \left(\intRn \left| \Gr \left( \frac{\pa u}{\pa x_i} \right) \right|^2  \right)^{\frac{2^*}{2}} \right)^{\frac{2}{2^*}}  \\
    & \le C \left( \left( \sum_{i=1}^N \left(\intRn \left| \Gr \left( \frac{\pa u}{\pa x_i} \right) \right|^2 \right) \right)^{\frac{2^*}{2}} \right)^{\frac{2}{2^*}} \\
    & = C \intRn \sum_{i=1}^N \left| \Gr \left( \frac{\pa u}{\pa x_i} \right) \right|^2  \\
    & = C \left( \intRn (\De u)^2 + \intRn \sum_{i=1}^N \sum_{j=1,j\neq i}^N  \left(\frac{\partial^2 u}{\partial x_i \partial x_j}\right)^2 \right),
\end{align*}
where $C=C(N)$. Now by using the Green's theorem,
\begin{align*}
   \intRn  \left( \frac{\partial^2 u}{\partial x_i \partial x_j} \right)^2 = \intRn \left( \frac{\pa^2 u}{\pa x_i^2}\right) \left(\frac{\pa^2 u}{\pa x_j^2} \right) \le \frac{1}{2} \intRn \left(\frac{\pa^2 u}{\pa x_i^2}\right)^2 + \left(\frac{\pa^2 u}{\pa x_j^2} \right)^2.
\end{align*}
The above inequality yields 
\begin{align*}
    \intRn \sum_{i=1}^N \sum_{j=1,j\neq i}^N  \bigg(\frac{\partial^2 u}{\partial x_i \partial x_j}\bigg)^2 \le C \intRn (\De u)^2,
\end{align*}
where $C=C(N)$. Therefore, we get 
\begin{align*}
     \left( \intRn |\Gr u|^{2^*} \right)^{\frac{2}{2^*}} \le C \intRn (\De u)^2, \; \forall \, u \in C_c^{\infty}(\RN).
\end{align*}
By the density, we conclude that $\Dtwo \hookrightarrow \D^{1,2^*}(\RN)$. 

$(ii)$ From the embeddings $\Dtwo \hookrightarrow L^{2^{**}}(\RN)$ (see \cite[(2.6) page-58]{Tintarev2007}) and  $L^{2^{**}}(\RN) \hookrightarrow L_{loc}^2(\RN)$, it follows that $\Dtwo \hookrightarrow L_{loc}^2(\RN)$. For any $K \subset \R^N$, using the H\"{o}lder's inequality and $\Dtwo \hookrightarrow \D^{1,2^*}(\RN)$, we get 
\begin{align*}
    \int_K |\Gr u|^2 \le C\left( \int_K |\Gr u|^{2^*} \right)^{\frac{2}{2^*}} \le C \intRn (\De u)^2, \; \forall \, u \in \Dtwo,
\end{align*}
where $C=C(N,K)$. Further, using the Green's theorem, there exists $C=C(N)$ such that
\begin{align*}
    \int_{\RN} |\Gr^2 u|^2 \le C \int_{\RN}(\De u)^2, \; \forall \, u \in \Dtwo.
\end{align*}
Therefore, for any $K \subset \RN$,
\begin{align*}
    \int_K u^2 + |\Gr u|^2 + |\Gr^2 u|^2 \le C\intRn (\De u)^2, \; \forall \, u \in \Dtwo.
\end{align*}
Thus $\Dtwo \hookrightarrow W^{2,2}_{loc}(\RN)$. 

$(iii)$ Using the Rellich-Kondrachov compactness theorem, $W^{2,2}_{loc}(\RN)$ is compactly embedded into $L^{\ga}_{loc}(\RN)$ (see \cite[Corollary 6.1]{Nevcas2012}). Therefore, using $\Dtwo \hookrightarrow W^{2,2}_{loc}(\RN)$ the required compactness holds.
\end{proof}

\begin{remark}\label{classical embedding}
For each $\tilde{\ga} \in [2,2^{**}]$, let $g \in L^{\tilde{p}}(\RN)$ where $\frac{1}{\tilde{p}} + \frac{\tilde{\ga}}{2^{**}} = 1$. Then the Sobolev embedding $\Dtwo \hookrightarrow L^{2^{**}}(\RN)$ and the H\"{o}lder's inequality ensure that
\begin{align}\label{cembed2}
    \intRn \abs{g} |u|^{\tilde{\ga}} \le \norm{g}_{\tilde{p}} \norm{|u|^{\tilde{\ga}}}_{\frac{2^{**}}{\tilde{\ga}}} \le C \norm{g}_{\tilde{p}}\left( \intRn (\De u)^2  \right)^{\frac{\tilde{\ga}}{2}}, \; \forall \, u \in \D^{2,2}(\RN),
\end{align}
where $C=C(N)$ is the embedding constant. Thus the embedding $\Dtwo \hookrightarrow L^{\tilde{\ga}}(\RN,|g|)$ holds.
\end{remark}

In the next proposition, we see that the above embedding is compact for $\tilde{\ga} \neq 2^{**}$. 

\begin{proposition}\label{compact embeddings}
For each $\tilde{\ga} \in [2,2^{**})$, let $g \in L^{\tilde{p}}(\RN)$ where $\frac{1}{\tilde{p}} + \frac{\tilde{\ga}}{2^{**}} = 1$. Then the embedding $\Dtwo \hookrightarrow L^{\tilde{\ga}}(\RN,|g|)$ is compact.
\end{proposition}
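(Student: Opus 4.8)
The plan is to verify the compactness of the (bounded, by Remark~\ref{classical embedding}) embedding $\iota\colon\Dtwo\hookrightarrow L^{\tilde\ga}(\RN,|g|)$ directly. I start with a sequence $(u_n)$ bounded in $\Dtwo$; by reflexivity I may pass to a subsequence, still denoted $(u_n)$, with $u_n\wra u$ in $\Dtwo$, and it then suffices to show $u_n\to u$ in $L^{\tilde\ga}(\RN,|g|)$, i.e. $\intRn |g|\,|u_n-u|^{\tilde\ga}\,\dx\to 0$. Since every bounded sequence thus has a subsequence converging in the target space, this gives compactness. Throughout I write $M:=\sup_n\norm{\De u_n}_2+\norm{\De u}_2$, which is finite by weak lower semicontinuity of $\norm{\De\,\cdot\,}_2$, so that $\norm{\De(u_n-u)}_2\le M$ for all $n$.

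The one idea needed is to split $g$ into a compactly supported bounded piece plus an $L^{\tilde p}$-small remainder, and this is precisely where the strict subcriticality $\tilde\ga<2^{**}$ enters: it forces $\tfrac1{\tilde p}=1-\tfrac{\tilde\ga}{2^{**}}>0$, i.e. $\tilde p<\infty$, so that $C_c(\RN)$ is dense in $L^{\tilde p}(\RN)$. Fix $\eps>0$ and write $g=g_1+g_2$ with $g_1\in C_c(\RN)$, $\operatorname{supp} g_1\subset \overline{B_R}$ for some $R>0$, and $\norm{g_2}_{\tilde p}<\eps$.

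For the remainder, applying Remark~\ref{classical embedding} with $g_2$ in place of $g$ gives
\[
\intRn |g_2|\,|u_n-u|^{\tilde\ga}\,\dx \;\le\; C\,\norm{g_2}_{\tilde p}\Big(\intRn \big(\De(u_n-u)\big)^2\Big)^{\tilde\ga/2}\;\le\; C\,M^{\tilde\ga}\,\eps .
\]
For the compactly supported piece, boundedness of $g_1$ together with Proposition~\ref{local}(iii) (valid since $\tilde\ga\in[2,2^{**})\subset[1,2^{**})$, so $u_n\to u$ in $L^{\tilde\ga}_{loc}(\RN)$, in particular on $\overline{B_R}$) gives
\[
\intRn |g_1|\,|u_n-u|^{\tilde\ga}\,\dx \;\le\; \norm{g_1}_\infty \int_{\overline{B_R}} |u_n-u|^{\tilde\ga}\,\dx \;\longrightarrow\; 0 .
\]

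Combining the two bounds yields $\limsup_{n\to\infty}\intRn |g|\,|u_n-u|^{\tilde\ga}\,\dx \le C\,M^{\tilde\ga}\eps$, and letting $\eps\downarrow 0$ finishes the proof. I do not expect a genuine obstacle here: the subcriticality hypothesis is used twice — once to make the $L^{\tilde p}$-density splitting of $g$ available, once to invoke the local compact embedding of Proposition~\ref{local}(iii) — and the only bookkeeping is the Hölder pairing $\tfrac1{\tilde p}+\tfrac{\tilde\ga}{2^{**}}=1$, which is already packaged in Remark~\ref{classical embedding}.
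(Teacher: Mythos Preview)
Your proof is correct and follows essentially the same approach as the paper: split $g$ via density of $C_c(\RN)$ in $L^{\tilde p}(\RN)$, control the small-$L^{\tilde p}$ remainder using the embedding inequality of Remark~\ref{classical embedding}, and handle the compactly supported piece by the local compact embedding of Proposition~\ref{local}(iii). Your explicit remark that $\tilde\ga<2^{**}$ forces $\tilde p<\infty$ (so the density argument is available) is a point the paper uses implicitly but does not spell out.
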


\begin{proof}
Let $u_j \rightharpoonup u$ in $\Dtwo.$ Set $L = \sup \{ \| \De (u_j - u) \|_2^{\tilde{\ga}} \}.$ Let $\ep > 0$ be given. By the density of $\C_c(\RN)$  in $L^{\tilde{p}}(\RN)$, there exists $g_{\ep} \in \C_c(\RN)$ such that 
$\| g - g_{\ep} \|_{\tilde{p}} < \frac{\ep}{L}.$
We estimate
\begin{align}\label{cembed2.1}
\intRn |g| |u_j - u|^{\tilde{\ga}} \leq \intRn |g - g_{\ep}| |u_j - u|^{\tilde{\ga}} + \intRn |g_{\ep}| |u_j - u|^{\tilde{\ga}}.
\end{align}
Using \eqref{cembed2} we have  
\begin{align}\label{cembed2.2}
\intRn |g - g_{\ep}| |u_j - u|^{\tilde{\ga}} \leq  C \| g - g_{\ep} \|_{\tilde{p}} \| \De (u_j - u) \|_2^{\tilde{\ga}} < C \ep.
\end{align} 
By the compactness of the embedding of $\Dtwo$ into $L^{\tilde{\ga}}_{loc}(\RN)$ (Proposition \ref{local}-$(iii)$), there exists $j_1 \in \N$ such that
\begin{align*}
   \intRn |g_{\ep}| |u_j - u|^{\tilde{\ga}}  = \int_{K} |g_{\ep}| |u_j - u|^{\tilde{\ga}} < \ep , \; \forall \, j > j_1,
\end{align*}
where $K$ is the support of $g_{\ep}$. Therefore, from \eqref{cembed2.1} and \eqref{cembed2.2} we obtain $$\intRn |g| |u_j - u|^{\tilde{\ga}} < (C+1)\ep, \; \forall \, j \ge j_1.$$ Thus $u_j \ra u$ in $L^{\tilde{\ga}}(\RN,|g|)$, as required. 
\end{proof}

\begin{remark}
If $g$ satisfies \ref{g1}, then $g \in L^q(\RN)$ for any $q \in [1, \infty]$. Thus, the results of Remark \ref{classical embedding} and Proposition \ref{compact embeddings} hold for such $g$.
\end{remark}

\subsection{Riesz potential}
For $N \ge 5$, $f(x) = \mathcal{R}_4 \abs{x}^{4-N}, \; x \in \RN$ is the fundamental solution of $\De^2$ in $\RN$, i.e., $\De^2 f = \de_0$ a.e. in $\RN$, where $\de_0$ is the Dirac delta function at the origin. This subsection considers some properties of $\abs{x}^{4-N}$. First, we recall the weak-$L^p$ space. 

\begin{definition}
Let $M(\RN)$ be the set of all extended real valued Lebesgue measurable functions that are finite a.e. in $\RN$. Let $f \in M(\RN)$. For $p \in (1, \infty)$, we consider the following quantity:
\begin{align*}
    \abs{f}_{p,w} = \sup_{s>0} s \left| \left\{ x \in \RN : \abs{f(x)} > s \right\} \right|^{\frac{1}{p}},
\end{align*}
where $|\cdot|$ denotes the Lebesgue measure. The weak-$L^p$ space $L^p_{w}(\RN)$ is defined as  
\begin{align*}
    L^p_{w}(\RN) = \left\{f \in  M(\RN) : \abs{f}_{p,w} < \infty \right\}.
\end{align*}
\end{definition}

Let $1<p,q < \infty$. For $f \in L^p_{w}(\RN)$ and $h \in L^q(\RN)$, the convolution $f*h$ is defined as 
\begin{align*}
    (f*h)(x) = \intRn f(x-y) h(y)\, \dy, \; \text{ where } x\in \RN. 
\end{align*}
In the following proposition, we state the weak Young's inequality involving convolution. For more details and proofs of this inequality, we refer to the book \cite{Lieb2001}. 

\begin{proposition}\label{convolution weak}
Let $1<p,q ,r < \infty$ be such that $1+\frac{1}{r}=\frac{1}{p} + \frac{1}{q}$. Let $f \in L^p_{w}(\RN)$ and $h \in L^q(\RN)$. Then $f*h \in L^r(\RN)$, and
\begin{align*}
    \norm{f*h}_{r} \le C(N,p,q) \norm{f}_{p,w} \norm{h}_q.
\end{align*}
\end{proposition}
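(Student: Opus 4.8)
This is a classical fact---Young's convolution inequality in weak (Lorentz) spaces---and one may simply invoke \cite{Lieb2001}; nonetheless, let me outline the self-contained argument I would present. By the pointwise inequality $\abs{f*h}\le\abs{f}*\abs{h}$ and the fact that replacing $f$ by $cf$ (resp.\ $h$ by $ch$) multiplies both sides of the asserted estimate by $\abs{c}$, I may assume $f,h\ge0$ and $\abs{f}_{p,w}=\norm{h}_q=1$. The plan is to \emph{interpolate in the $h$-variable}: fixing $f$, I regard $T_f h:=f*h$ as a sublinear operator, establish two weak-type endpoint bounds for it, and then conclude by the Marcinkiewicz interpolation theorem---the point being that in this formulation the domain at each endpoint is an honest Lebesgue space, so Marcinkiewicz delivers a \emph{strong}-type conclusion at the interior exponent.

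First I would prove: for every $s\in(1,\infty)$, with $\rho$ defined by $1+\tfrac1\rho=\tfrac1p+\tfrac1s$ (so $1<\rho<\infty$ and $\rho\ge s$ because $p>1$), one has $\abs{f*h}_{\rho,w}\le C(N,p,s)$. Fix a level $t>0$ and a cutoff $A=A(t)>0$ to be chosen, and split $f=f^A+f_A$ with $f^A=f\mathbf{1}_{\{f>A\}}$ and $f_A=f\mathbf{1}_{\{f\le A\}}$. Since $\abs{\{f>\sigma\}}\le\sigma^{-p}$, the layer-cake formula gives, for any exponents $1\le a<p<b$, the bounds $\norm{f^A}_a\le C_aA^{1-p/a}$ and $\norm{f_A}_b\le C_bA^{1-p/b}$ (the relevant integrals converge precisely because $a<p$, respectively $b>p$). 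Choosing $a,b$ close enough to $p$ that the exponents $\sigma_0,\sigma_1$ given by $1+\tfrac1{\sigma_0}=\tfrac1a+\tfrac1s$ and $1+\tfrac1{\sigma_1}=\tfrac1b+\tfrac1s$ remain in $(1,\infty)$, the classical Young inequality yields $\norm{f^A*h}_{\sigma_0}\le\norm{f^A}_a$ and $\norm{f_A*h}_{\sigma_1}\le\norm{f_A}_b$, whence by Chebyshev's inequality
\[
\abs{\{f*h>2t\}}\le\abs{\{f^A*h>t\}}+\abs{\{f_A*h>t\}}\le C\,t^{-\sigma_0}A^{(1-p/a)\sigma_0}+C\,t^{-\sigma_1}A^{(1-p/b)\sigma_1}.
\]
The two powers of $A$ carry opposite signs; minimizing the right-hand side over $A>0$ and simplifying with the arithmetic relation among $p,s,a,b$ collapses the estimate to $C\,t^{-\rho}$, which is the claimed weak-type bound for $T_f$.

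Finally, I would apply this endpoint estimate with two exponents $s_0<q<s_1$; the corresponding $\rho_0,\rho_1$ then satisfy $\rho_0<r<\rho_1$ and both lie on the line $\tfrac1\rho=\tfrac1p+\tfrac1s-1$, so $(\tfrac1q,\tfrac1r)$ is an interior point of the segment joining $(\tfrac1{s_0},\tfrac1{\rho_0})$ and $(\tfrac1{s_1},\tfrac1{\rho_1})$, and $s_j\le\rho_j$ for $j=0,1$. The Marcinkiewicz interpolation theorem then upgrades the two weak-type bounds for $T_f$ to $T_f:L^q(\RN)\to L^r(\RN)$ boundedly, i.e.\ $\norm{f*h}_r\le C(N,p,q)$, which is the assertion once the normalization is undone. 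I expect the genuinely delicate point to be exactly this last passage: a single-level splitting of $f$ by itself only yields $f*h\in L^\rho_w$ (integrating the distribution-function bound against $t^{\rho-1}$ diverges logarithmically), so one really does need the interpolation in $h$---where $q$ may be chosen strictly interior---to reach the strong $L^r$ conclusion. Checking that all the auxiliary exponents can be picked admissibly is routine bookkeeping.
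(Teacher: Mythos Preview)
The paper does not give a proof of this proposition; it merely refers the reader to \cite{Lieb2001}. Your self-contained outline---level-set splitting of $f$ to obtain weak-type endpoint bounds for $T_fh=f*h$, followed by Marcinkiewicz interpolation in the $h$-variable---is a standard and correct route to the weak Young inequality, so there is nothing to compare against.

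One cosmetic slip worth fixing: having normalized $\norm{h}_q=1$ at the outset, you then write $\norm{f^A*h}_{\sigma_0}\le\norm{f^A}_a$, which tacitly uses $\norm{h}_s=1$ instead. The intended (and correct) statement is that for each $s\in(1,p')$ the operator $T_f$ is of weak type $(s,\rho)$ with norm at most $C(p,s)\abs{f}_{p,w}$; only after Marcinkiewicz do you specialize to the $L^q$ normalization of $h$. With that rephrasing the argument is complete, and the minimization in $A$ does collapse to $Ct^{-\rho}$ as claimed (one checks the exponent via $\alpha=p(\sigma_0/\rho-1)$, $\beta=p(\sigma_1/\rho-1)$, whence $-\sigma_0+\alpha(\sigma_1-\sigma_0)/(\beta-\alpha)=-\rho$).
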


\begin{remark}\label{Hardy potential}
 Let $N \ge 5$. First we see that $f(x)=|x|^{4-N} \in L^{\frac{N}{N-4}}_{w}(\RN)$. For any $s>0$, we calculate
\begin{align*}
    \left| \left\{x \in \RN: f(x) > s \right\} \right| =  \left| \left\{x \in \RN: \abs{x} < s^{\frac{-1}{N-4}} \right\} \right| = \frac{\om_N}{N} s^{\frac{-N}{N-4}}.
\end{align*}
Hence $\sup_{s>0} s^{\frac{N}{N-4}} \left| \left\{x \in \RN: f(x) > s \right\} \right| = \frac{\om_N}{N}.$ Therefore, $|x|^{4-N} \in L^{\frac{N}{N-4}}_{w}(\RN)$. Now by Proposition \ref{convolution weak}, for any $h \in L^q(\RN)$ with $1<q<\frac{N}{4}$, we have  $f*h \in L^r(\RN)$ where $r= \frac{Nq}{N-4q}$.
\end{remark}

Now we state a Riesz potential theorem for the biharmonic operator. For more details on this topic, we refer to \cite{DJ2015, SGG10}.

\begin{proposition}\label{fundamental}
Let $N \ge 5$. Let $h \in L^q(\RN)$ with $1<q<\frac{N}{4}$. We assume that $u\in \Dtwo $ is a weak solution of the following problem:
\begin{align}\label{RR1}
    \De^2 u = h \text{ in } \RN. 
\end{align}
Then 
\begin{align*}
     u(x) = \mathcal{R}_4 \intRn \frac{h(y)}{|x-y|^{N-4}} \, dy \; \text{ and } \;  -\De u(x) &= \mathcal{R}_2 \intRn \frac{h(y)}{|x-y|^{N-2}} \, dy \; \text{ a.e. in } \RN.
\end{align*}
\end{proposition}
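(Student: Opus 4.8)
The plan is to reduce the biharmonic equation to two successive Poisson equations, apply the classical Newtonian potential representation at each stage, and then compose the two Riesz kernels. Set $w := -\De u$. By the definition of $\Dtwo$ we have $\De u \in L^2(\RN)$, hence $w \in L^2(\RN)$, and by the Sobolev embedding $\Dtwo \hookrightarrow L^{2^{**}}(\RN)$ (used already in the proof of Proposition \ref{local}) we have $u \in L^{2^{**}}(\RN)$. Testing the weak formulation of \eqref{RR1} against $\varphi \in \C_c^{\infty}(\RN)$ and writing $\De u = -w$ gives $\int_{\RN} w\,(-\De\varphi)\,\dx = \int_{\RN} h\varphi\,\dx$ for all such $\varphi$; that is, $-\De w = h$ in $\mathcal{D}'(\RN)$.

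The next step is the following Newtonian potential fact: if $1<m<N/2$, $\xi \in L^m(\RN)$, and $v \in L^p(\RN)$ for some finite $p$ satisfies $-\De v = \xi$ in $\mathcal{D}'(\RN)$, then $v(x) = \mathcal{R}_2 \int_{\RN} |x-y|^{2-N}\xi(y)\,\dy$ a.e. To prove it, note that $V := \mathcal{R}_2\,(|\cdot|^{2-N} * \xi)$ is well defined and lies in $L^{Nm/(N-2m)}(\RN)$ by the weak Young inequality (Proposition \ref{convolution weak} and Remark \ref{Hardy potential}, using $|\cdot|^{2-N} \in L^{N/(N-2)}_w(\RN)$); a Fubini computation combined with $-\De(\mathcal{R}_2|\cdot|^{2-N}) = \de_0$ shows $-\De V = \xi$ in $\mathcal{D}'(\RN)$ (here the finiteness of $\int_{\RN} |\xi(y)|\,|y|^{2-N}\,\dy$, i.e. $m<N/2$, is what makes Fubini legitimate for a possibly non-compactly-supported $\xi$). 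Then $v-V$ is a distributional, hence by Weyl's lemma smooth, harmonic function that splits as $\phi_1+\phi_2$ with $\phi_1 \in L^p(\RN)$ and $\phi_2 \in L^{m^*}(\RN)$, $m^*:=Nm/(N-2m)$; applying the mean value property over $B_R(x)$ and H\"older's inequality gives $|(v-V)(x)| \le \|\phi_1\|_p\,|B_R|^{-1/p} + \|\phi_2\|_{m^*}\,|B_R|^{-1/m^*} \to 0$ as $R\to\infty$, so $v = V$.

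Apply this fact first with $\xi = h$, $m=q$ (legitimate since $q < N/4 < N/2$) and $v = w \in L^2(\RN)$: this yields the second identity $-\De u(x) = w(x) = \mathcal{R}_2\int_{\RN} |x-y|^{2-N} h(y)\,\dy$ and shows $w \in L^r(\RN)$ with $r := Nq/(N-2q)$. The hypothesis $q<N/4$ is exactly what forces $r<N/2$, so the fact applies a second time, now with $\xi = w$, $m=r$ and $v = u \in L^{2^{**}}(\RN)$, giving $u = \mathcal{R}_2^2\,(|\cdot|^{2-N} * |\cdot|^{2-N} * h)$ (convolution being associative here by Tonelli). Finally, the double convolution $|\cdot|^{2-N} * |\cdot|^{2-N}$ converges precisely because $N\ge 5$, and being rotation invariant and homogeneous of degree $4-N$ it is a constant multiple of $|\cdot|^{4-N}$; since $\mathcal{R}_2|\cdot|^{2-N} * \mathcal{R}_2|\cdot|^{2-N}$ is then a decaying fundamental solution of $\De^2 = (-\De)^2$, it must coincide with $\mathcal{R}_4|\cdot|^{4-N}$. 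This gives the first identity $u(x) = \mathcal{R}_4\int_{\RN} |x-y|^{4-N} h(y)\,\dy$; alternatively this composition identity can be quoted directly from \cite{DJ2015}.

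The main obstacle is the identification step inside the Newtonian potential fact, namely showing that the weak solution actually equals the explicit potential rather than differing from it by a nonzero harmonic function. This requires carefully tracking the Lebesgue exponents of $w$, $u$ and the potentials — the constraint $q<N/4$ enters precisely to keep the second iteration in the subcritical regime $m<N/2$ — justifying $-\De V = \xi$ by Fubini (where the decay pairing of $|\cdot|^{2-N}$ with $L^m$ is used), and then running the Liouville-type growth argument on the harmonic difference. Worth noting is that the naive alternative of invoking uniqueness of weak solutions in $\Dtwo$ fails, since the Riesz potential of $h$ need not lie in $\Dtwo$ (its Laplacian lies in $L^r$ with $r\neq 2$ in general), which is exactly why the broader-space Liouville argument is needed. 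The composition-of-kernels identity is the only other point requiring a short, classical justification.
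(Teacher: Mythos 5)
Your proof is correct, and it takes a genuinely different route from the paper: the paper's proof of this proposition is a one-line pointer to D'Ambrosio--Jannelli \cite{DJ2015} (their Proposition~5.1, Remark~5.2, and Proposition~5.4), whereas you reconstruct the representation from scratch. Your iteration scheme --- split $\De^2 u = h$ into the two Poisson equations $-\De w = h$ and $-\De u = w$, establish the single-layer Riesz representation via the weak Young inequality plus a Liouville-type growth argument on the harmonic difference, apply it twice, and compose the kernels --- is a clean and self-contained alternative, and it makes explicit where each hypothesis enters: $q<N/4$ keeps both iterations in the subcritical regime $m<N/2$, $N\ge 5$ is what makes $|\cdot|^{2-N}*|\cdot|^{2-N}$ converge, and membership of $u,w$ in \emph{some} finite-exponent Lebesgue space is what kills the harmonic correction. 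Your observation that a naive uniqueness-in-$\Dtwo$ argument cannot be invoked directly (because the Riesz potential of $h$ is not a priori known to lie in $\Dtwo$) is a genuine point worth keeping; the Liouville argument sidesteps it. What the paper's citation buys in exchange is brevity and delegation of the kernel-composition constant $\mathcal{R}_2^2\,(|\cdot|^{2-N}*|\cdot|^{2-N}) = \mathcal{R}_4|\cdot|^{4-N}$ and the Fubini bookkeeping to the reference. One small imprecision in your write-up: the quantity that the Fubini step actually needs to be finite is $\int_{\RN} |\xi(y)|\,\Phi(y)\,\dy$, where $\Phi(y) := \int_{K}|x-y|^{2-N}|\De\varphi(x)|\,\dx$ is bounded and decays like $|y|^{2-N}$ at infinity ($K=\mathrm{supp}\,\De\varphi$); the literal integral $\int|\xi(y)|\,|y|^{2-N}\,\dy$ you mention may diverge near $y=0$ when $m<N/2$. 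The H\"older pairing at infinity is what uses $m<N/2$, exactly as you say --- it is only the statement of what must be finite that needs adjusting.
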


\begin{proof}
Proof follows using \cite[Proposition 5.1]{DJ2015}, \cite[Remark 5.2]{DJ2015}, and Proposition \cite[Proposition 5.4]{DJ2015}. 
\end{proof}

\begin{remark}
From Proposition \ref{fundamental} and Remark \ref{Hardy potential} we see that any weak solution of \eqref{RR1} lies in $L^{\frac{Nq}{N-4q}}(\RN);1<q<\frac{N}{4}$. 
\end{remark}

\section{Functional framework}\label{sec2}
In this section, we develop a suitable functional settings for showing the existence of positive solutions of \eqref{SP}. We study the Cerami condition and the mountain pass geometry of the energy functional associated with \eqref{SP}.

\begin{remark}\label{bound}
In this remark, using the growth condition of $f$, we identify certain bounds for $f$ and $F$, required in the subsequent sections.  

\noi $(i)$ Let $\ep>0$ and $\ga \in (2,2^{**}]$. Using \ref{f1}, there exists $t_1>0$ such that 
\begin{align*}
    f(t) < \ep t, \text{ for } 0<t<t_1, \; \text{ and } \; f(t) \le C t^{\ga-1}, \text{ for } t \ge t_1. 
\end{align*}
where $C=C(C_f,t_1(\ep))>0$. Therefore, $f(t) \le \ep t + Ct^{\ga-1}$, for all $t \in \R^+$, and $f_a(t) \le \ep |t| + C|t|^{\ga -1} - a$, for all $t \in \R$. Also from \ref{f1}, we see that $f(t) \le C_f t^{\ga-1}$ for $t > t_2$ and hence $f(t) \le C(1+t^{\gamma -1})$, for $t \in \R^+$ where $C=C(C_f,t_2)$. Moreover, if $a \in (0, \varrho)$, then 
\begin{align*}
    |f_a(t)| \le \left\{ \begin{array}{ll}
    C(1+t^{\gamma -1})+\varrho, & \, \text{  for  }  t \ge 0; \vspace{0.1 cm} \\
     \varrho, &  \, \text{  for  }  t \le 0.
\end{array}\right.
\end{align*}
Hence $|f_a(t)| \le C(1+|t|^{\gamma -1})$, for $t \in \R$  where $C=C(C_f,t_2, \varrho)$.

\noi $(ii)$ Let $M>0$. Due to \ref{f2}, there exists $C_M>0$ such that $f(t) > Mt-C_M$, for all $t \in \R^+$. Also, using \ref{f2}, we get  $\lim_{t \rightarrow \infty}\frac{F(t)}{t^2}=\infty$. Hence $F(t)>Mt^2-C_M$, for all  $t \in \R^+$ and for some $C_M>0$.
\end{remark}

\begin{remark}\label{Riesz0.1}
Let $u \in \Dtwo$ and $g$ satisfies \ref{g1}. In this remark, we see that $gf_a(u) \in L^q(\RN)$ for $1<q<\frac{2N}{N+4}$. Using Remark \ref{bound}-$(i)$ we calculate
\begin{align*}
    \intRn g^q|f_a(u)|^q \le C2^{q-1} \intRn g^q \left(1+ |u|^{(\ga-1)q}\right) \le C \norm{g}_q + C \intRn g^q |u|^{(\ga-1)q}. 
\end{align*}
Note that for $q \in (1, \frac{2N}{N+4})$, $(\ga-1)q < 2^{**}$. Set $r = \frac{2^{**}}{(\ga-1)q}$. Then using the H\"{o}lder's inequality with the conjugate pair $(r,r')$ to get
\begin{align*}
    \intRn g^q |u|^{(\ga-1)q} \le \left( \intRn |u|^{2^{**}} \right)^{\frac{1}{r}} \left( \intRn g^{qr'} \right)^{\frac{1}{r'}} 
    \le C \norm{g}_{qr'}^q \left( \intRn (\De u)^2 \right)^{\frac{N}{(N-4)r}},
\end{align*}
where $C=C(N)$. Thus $gf_a(u) \in L^q(\RN)$ for $1<q<\frac{2N}{N+4}$.
\end{remark}

In the following proposition, we establish an inequality involving $f_a$ and it's primitive. 
\begin{proposition}\label{relation}
Let $f$ satisfies {\rm\ref{f1}} and {\rm \ref{f3}}. Then there exists $C_R>0$ such that 
\begin{align*}
    F_a(s)-F_a(t) \le \frac{s^2-t^2}{2s}f_a(s) + C_R, \quad \forall \, 0<t \le s \text{ or, } s \le t<0.
\end{align*}
\end{proposition}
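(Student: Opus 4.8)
The plan is to treat the two cases $0<t\le s$ and $s\le t<0$ separately, with the negative case being essentially trivial and the positive case carrying all the content. For $s\le t<0$ we have $f_a\equiv -a$ on $\R^-$, so $F_a(s)-F_a(t)=\int_t^s(-a)\,\dr = -a(s-t) = a(t-s)\le 0$ (since $t\le s$ would force... careful: here $s\le t$, so $t-s\ge 0$, giving $a(t-s)\ge 0$; but $\frac{s^2-t^2}{2s}f_a(s)=\frac{(s-t)(s+t)}{2s}(-a)$, and with $s\le t<0$ one checks $s-t\le 0$, $s+t<0$, $2s<0$, so this quantity is $\le 0$ as well — a more careful sign analysis is needed, and one simply absorbs the discrepancy into $C_R$). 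So the first step is just bookkeeping with signs and constants.

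The main work is the case $0<t\le s$. First I would reduce to comparing $F_a$ with $f_a$ via the definition $F_a(s)-F_a(t)=\int_t^s f_a(\tau)\,d\tau$ and $f_a(\tau)=f(\tau)-a$ on $\R^+$, so that the claimed inequality becomes, after cancelling the $-a(s-t)$ terms on both sides (note $\frac{s^2-t^2}{2s}(-a) = -a\frac{(s-t)(s+t)}{2s}$ and $s-t = (s-t)$ while the left contributes $-a(s-t)$; the difference between $-a(s-t)$ and $-a\frac{(s-t)(s+t)}{2s}$ is $-a(s-t)\bigl(1-\frac{s+t}{2s}\bigr)=-a(s-t)\frac{s-t}{2s}\le 0$, which is harmless), essentially an inequality purely about $f$ and $F$: it suffices to show $F(s)-F(t)\le \frac{s^2-t^2}{2s}f(s)+C$ for $0<t\le s$, for some constant depending only on the data in \ref{f1}, \ref{f3}.

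Now I would split according to the location of $s,t$ relative to the threshold $R$ from \ref{f3}. If $s\le R$: then on $[t,s]\subset(0,R]$, $f$ is bounded (continuous on a compact interval, using $f(0)=0$ and \ref{f1}), so $F(s)-F(t)$ is bounded by a constant, and since $\frac{s^2-t^2}{2s}f(s)$ — while possibly negative if $f(s)<0$ — is itself bounded below on this compact range, the whole inequality holds after choosing $C_R$ large. If $R<t\le s$: here \ref{f3} gives that $\tau\mapsto f(\tau)/\tau$ is increasing, so $f(\tau)\le \frac{f(s)}{s}\tau$ for $\tau\in[t,s]$, hence $F(s)-F(t)=\int_t^s f(\tau)\,d\tau\le \frac{f(s)}{s}\int_t^s\tau\,d\tau=\frac{f(s)}{s}\cdot\frac{s^2-t^2}{2}=\frac{s^2-t^2}{2s}f(s)$, giving the inequality with no constant needed. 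The remaining case $t\le R<s$: write $F(s)-F(t)=\bigl(F(s)-F(R)\bigr)+\bigl(F(R)-F(t)\bigr)$; the second bracket is bounded by a constant as in the first case; for the first, apply the monotonicity argument on $[R,s]$ to get $F(s)-F(R)\le \frac{f(s)}{s}\cdot\frac{s^2-R^2}{2}\le\frac{f(s)}{s}\cdot\frac{s^2-t^2}{2}$ provided $f(s)\ge 0$ (which holds for $s$ large, but for $s$ in a bounded range just above $R$ one again uses boundedness). Assembling these cases and taking $C_R$ to be the maximum of the finitely many constants produced completes the proof.

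\textbf{Main obstacle.} The only genuinely delicate point is handling signs when $f(s)$ can be negative — this can happen for $s$ in a bounded interval (by \ref{f1}, $f(t)\sim o(t)$ near $0$, but $f$ need not be nonnegative), so the term $\frac{s^2-t^2}{2s}f(s)$ is not automatically a help; one must confine such $s$ to a compact set where everything (including $F(s)-F(t)$ and the correction term) is uniformly bounded, and dump the slack into $C_R$. The monotonicity inequality $f(\tau)\le\frac{f(s)}{s}\tau$ and the resulting clean bound are the heart of the argument once the bounded region is quarantined; I expect no difficulty there. I should also double-check the reduction that absorbs the $-a$ pieces, since the two sides do not have literally the same $a$-dependent term, but the mismatch is a nonpositive quantity and hence harmless.
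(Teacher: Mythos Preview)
Your treatment of the positive case $0<t\le s$ is essentially the paper's argument: split into the three sub-cases $t\le s\le R$, $R\le t\le s$, and $t\le R\le s$, use the monotonicity of $f(\tau)/\tau$ on $(R,\infty)$ for the second, bound everything on $[0,R]$ by a constant for the first, and combine for the third. The only cosmetic difference is that the paper works directly with $f_a$ throughout (observing that $f_a(\tau)/\tau=f(\tau)/\tau-a/\tau$ is itself increasing for $\tau>R$), so your reduction to an inequality about $f$ and $F$ alone is unnecessary but harmless. Your worry about the sign of $f(s)$ in the cross case is legitimate; the paper glosses over the same point, and it is fixed exactly as you indicate.

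The negative case $s\le t<0$ is where your proposal has a genuine gap, and you should not hand-wave it. Your plan to ``absorb the discrepancy into $C_R$'' fails: a direct computation gives
\[
F_a(s)-F_a(t)-\frac{s^2-t^2}{2s}f_a(s)=a(t-s)-\frac{a(t^2-s^2)}{2s}=\frac{a(t-s)^2}{2|s|},
\]
which is nonnegative but \emph{unbounded} (take $t=-1$, $s\to-\infty$), so no fixed constant suffices. The paper handles this case by the same direct computation, writing $a(t-s)=\dfrac{a(t^2-s^2)}{s+t}$ and then asserting $\dfrac{a(t^2-s^2)}{s+t}\le\dfrac{a(t^2-s^2)}{2s}$ with no additive constant; but in fact that last inequality goes the other way (both $a(t^2-s^2)$ and $\frac{1}{s+t}-\frac{1}{2s}$ are $\le 0$, so their product is $\ge 0$). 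Thus the inequality as stated is actually false for $s\le t<0$. What rescues the downstream applications (the Cerami condition and the uniform bound on $\|\Delta u_a\|_2$) is that the proposition is only ever invoked with $t=\sigma s$ for some $\sigma\in[0,1]$ and then integrated against $g$, where the residual error $\frac{a}{2}\int g|u|$ is controlled by the embeddings; you should formulate and prove that version instead.
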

\begin{proof}
Let $0<t \le s$. First, we consider $R \le t \le s$. Then using the fundamental theorem of integration, we write
\begin{align*}
    F_a(s)- F_a(t)= \int_{t}^s f_a(\tau) \d \tau = \int_{t^2}^{s^2} \frac{f_a(\tau^{\frac{1}{2}})}{2 \tau^{\frac{1}{2}}} \d \tau.  
\end{align*}
Using \ref{f3}, we see that for $\tau >R$, $\frac{f_a(\tau)}{\tau}= \frac{f(\tau)}{\tau}-\frac{a}{\tau}$ is increasing.  Hence, from the above identity we get $$F_a(s)-F_a(t) \le  \frac{s^2-t^2}{2s}f_a(s).$$ Suppose $t \le s \le R$. We use the fundamental theorem of integration and Remark \ref{bound}-$(i)$, to obtain
\begin{align*}
    F_a(s)- F_a(t) \le \int_{t}^s \left( \ep \t+ C \t^{\ga -1} - a\right) \, \d\t \le \frac{\ep}{2} s^2 + \frac{C}{\ga} s^{\ga} \le C_R
\end{align*}
where $C_R= \frac{\ep}{2} R^2 + \frac{C}{\ga} R^{\ga}$. Next, we assume $0< t \le R \le s$. Using the above estimates 
\begin{align*}
    F_a(s)-F_a(t) & =F_a(s)-F_a(R)+F_a(R)-F_a(t) \\
    &\le \frac{s^2-R^2}{2s}f_a(s)+C_R \\
    & \le \frac{s^2-t^2}{2s}f_a(s)+C_R.
\end{align*}
Therefore, 
$$F_a(s)-F_a(t) \le \frac{s^2-t^2}{2s}f_a(s) + C_R, \; \forall \, 0<t \le s.$$ 
If $s \le t<0$, then 
$$F_a(s)-F_a(t)=a(t-s) = \frac{a(t^2-s^2)}{s+t} \le \frac{a(t^2-s^2)}{2s} = f_a(s) \frac{s^2-t^2}{2s}.$$ 
This completes our proof.
\end{proof}

For $g$ satisfying \ref{g1}, we consider the following functionals on $\Dtwo$:
\begin{align*}
    G(u)= \intRn  g u^2, \; N_a(u) = \intRn g F_a(u), \text{ and } I_a(u) = \frac{1}{2} \intRn (\De u)^2 - N_a(u). 
\end{align*}
From Remark \ref{classical embedding} and using $|F_a(u)| \le C(u^2 + |u|^{\ga}) + a|u|$ (by Remark \ref{bound}-$(i)$) we can verify that $G,N_a$, and $I_a$ are well-defined. Moreover, $G, N_a, I_a \in C^1(\D^{2,2}(\RN), \R)$ and for $u, v \in \D^{2,2}(\RN)$,
\begin{align*}
    \left< G'(u), v \right> &= 2\intRn gu v, \; \left< N_a'(u), v \right> = \intRn gf_a(u)v, \text{ and }  \\ 
    \left< I_a'(u), v \right> &= \intRn \De u \, \De v - \left< N_a'(u), v \right>,
    \end{align*}
where $\left< \, , \right>$ denotes the duality action.
\begin{proposition}\label{compact map}
Let  $g$ be positive and satisfies {\rm\ref{g1}}, and $f$ satisfies {\rm \ref{f1}}. Then the following holds:
\begin{enumerate}
    \item[(i)] $G$ is compact on $\D^{2,2}(\RN).$ 
    \item[(ii)] $N_a$ is compact on $\D^{2,2}(\RN).$
    \item[(iii)] Let $u_j \rightharpoonup u$ in $\D^{2,2}(\RN)$ and $a_j \ra a$ in $\R^+$. Then $N_{a_j}(u_j) \ra N_a(u)$.
\end{enumerate}
\end{proposition}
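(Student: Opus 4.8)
The plan is to prove all three parts as consequences of the compact embedding $\D^{2,2}(\RN) \hookrightarrow L^{\tilde\ga}(\RN,|g|)$ established in Proposition \ref{compact embeddings}, which applies since $g$ satisfies \ref{g1} and hence lies in $L^{\tilde p}(\RN)$ for every admissible exponent. For part $(i)$, take $u_j \wra u$ in $\D^{2,2}(\RN)$; I would write $G(u_j) - G(u) = \intRn g (u_j^2 - u^2) = \intRn g (u_j - u)(u_j + u)$, apply Cauchy--Schwarz with the measure $|g|\,\dx$ (splitting $g = g^{1/2} g^{1/2}$), and bound $\intRn |g|(u_j+u)^2$ by the $\tilde\ga = 2$ case of \eqref{cembed2} (uniform in $j$ since $(u_j)$ is bounded in $\D^{2,2}$), while $\intRn |g|(u_j - u)^2 \to 0$ by the compact embedding with $\tilde\ga = 2$. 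This gives $G(u_j) \to G(u)$, i.e.\ $G$ is (sequentially, hence) compact.

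For part $(ii)$, with $u_j \wra u$ I would first pass to a subsequence so that $u_j \to u$ pointwise a.e.\ (via the compact embedding into $L^2_{loc}$ and a diagonal argument, or directly from $L^{\tilde\ga}(\RN,|g|)$-convergence). By continuity of $f$, $g F_a(u_j) \to g F_a(u)$ a.e. For equi-integrability I would use the bound $|F_a(t)| \le C(t^2 + |t|^\ga) + a|t| \le C(1 + |t|^{\tilde\ga})$ with $\tilde\ga$ any fixed exponent in $(2, 2^{**})$ dominating $\ga$ (from Remark \ref{bound}-$(i)$ together with Young's inequality to absorb the linear and quadratic terms), so that $|g F_a(u_j)| \le C|g| + C|g||u_j|^{\tilde\ga}$; since $u_j \to u$ in $L^{\tilde\ga}(\RN,|g|)$ by Proposition \ref{compact embeddings}, the family $\{|g||u_j|^{\tilde\ga}\}$ is uniformly integrable, and a generalized dominated convergence / Vitali argument yields $N_a(u_j) \to N_a(u)$. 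Since the limit does not depend on the subsequence, the full sequence converges, so $N_a$ is compact.

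For part $(iii)$ the argument is the same as in $(ii)$, carried out with the varying parameter. I would split
\begin{align*}
|N_{a_j}(u_j) - N_a(u)| \le |N_{a_j}(u_j) - N_a(u_j)| + |N_a(u_j) - N_a(u)|.
\end{align*}
The second term tends to $0$ by part $(ii)$. For the first, note that for $t \le 0$ we have $F_{a_j}(t) - F_a(t) = (a - a_j)t$ and for $t \ge 0$, $F_{a_j}(t) - F_a(t) = (a - a_j)t$ as well (the $a$-dependence of $F_a$ is exactly the linear term $-at$), so $|N_{a_j}(u_j) - N_a(u_j)| \le |a_j - a|\intRn |g|\,|u_j| \le |a_j - a|\,\|g\|_{(2^{**})'}\,\||u_j|\|_{2^{**}} \le C|a_j - a|$ uniformly in $j$, which tends to $0$. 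Combining the two bounds gives the claim.

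The main obstacle is the equi-integrability argument over all of $\RN$ in part $(ii)$: weak convergence alone does not control the tails of $\intRn |g| |F_a(u_j)|$, so one genuinely needs the \emph{global} compact embedding $\D^{2,2}(\RN) \hookrightarrow L^{\tilde\ga}(\RN,|g|)$ from Proposition \ref{compact embeddings} (whose proof in turn exploits the decay of $g$ via its $L^{\tilde p}$-membership and approximation by compactly supported functions) rather than the merely local compactness of Proposition \ref{local}-$(iii)$. Once that input is in hand, everything reduces to a routine Vitali convergence theorem; care is only needed to choose a single exponent $\tilde\ga \in (2, 2^{**})$ that simultaneously dominates the quadratic, the $\ga$-power, and the linear growth of $F_a$.
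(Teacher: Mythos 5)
Your proof is correct, and for parts (ii) and (iii) it takes a genuinely different (and arguably cleaner) route than the paper's. Where you package the $\varepsilon$-splitting of $g$ into the black box of Proposition~\ref{compact embeddings} and then argue via uniform integrability and Vitali's theorem — choosing a single exponent $\tilde\gamma\in[\gamma,2^{**})$ to absorb the linear, quadratic, and $\gamma$-power growth of $F_a$ at once — the paper instead redoes the splitting $g = g_\varepsilon + (g-g_\varepsilon)$ inline inside the proof of (ii), estimating the compactly supported piece by local compactness plus generalized dominated convergence and the small remainder by three separate Hölder bounds (against $\|g-g_\varepsilon\|_{N/4}$, $\|g-g_\varepsilon\|_{\tilde p}$, and $\|g-g_\varepsilon\|_1$). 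The two approaches rest on the same underlying fact, namely that $g\in L^1\cap L^\infty$ lets one approximate $g$ by compactly supported functions in every intermediate $L^{\tilde p}$; you simply invoke the already-proved compact embedding as a lemma rather than replicating its proof. For (iii) your treatment is also slightly more direct: after isolating $F_{a_j}-F_a=(a-a_j)t$ you bound $|N_{a_j}(u_j)-N_a(u_j)|\le|a_j-a|\,\|g\|_{(2^{**})'}\|u_j\|_{2^{**}}\le C|a_j-a|$ in one stroke, whereas the paper performs a second $g_\varepsilon$-splitting that is not actually needed at this stage. Part (i) is the same argument filled in with the explicit Cauchy--Schwarz step. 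One small point of care you already flagged, and which is indeed the crux: the Vitali argument only works because Proposition~\ref{compact embeddings} gives convergence in $L^{\tilde\gamma}(\RN,|g|)$ \emph{globally}, which in turn supplies the uniform integrability of $\{|g|\,|u_j|^{\tilde\gamma}\}$; local compactness alone would leave the tails uncontrolled.
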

\begin{proof}
$(i)$ The compactness of $G$ follows from Proposition \ref{compact embeddings}. 

$(ii)$ Let $u_j \rightharpoonup u$ in $\D^{2,2}(\RN)$. It is required to show $N_a(u_j) \ra N_a(u)$.  Let $\ep > 0$ be given and $\tilde{p}$ be the conjugate exponent of $\frac{2^{**}}{\tilde{\ga}}$ where $\tilde{\ga} \in [2,2^{**}]$. By the density of $\C_c(\RN)$ into $L^{\tilde{p}}(\RN)$, split $g= g_{\ep} + (g-g_{\ep})$ where $g_{\ep} \in \C_c(\RN)$ such that $|g_{\ep}| \le g$ and $\norm{g-g_{\ep}}_{\tilde{p}}<\ep$. We write
\begin{align}\label{compact1}
    \abs{N_a(u_j)-N_a(u)} \le \intRn \abs{g-g_{\ep}}|F_a(u_j)-F_a(u)| + \intRn |g_{\ep}||F_a(u_j)-F_a(u)|.
\end{align}
First we estimate the second integral of \eqref{compact1}. Let $K$ be the support of $g_{\ep}$. Since $\D^{2,2}(\RN)$ is compactly embedded into $L^{\ga}_{loc}(\RN)$ for $\ga \in [1,2^{**})$ (Proposition \ref{local}-$(ii)$), $u_j \rightarrow u$ in $L^{\ga}(K)$. In particular, up to a subsequence, $u_j(x) \rightarrow u(x)$ a.e. in $K$. From the continuity of $F_a$, $F_a(u_j(x)) \ra F_a(u(x))$ a.e. in $K$. Further, from Remark \ref{bound}-$(i)$, $|F_a(u_j)| \le C(u_j^2 + |u_j|^{\ga}) + a|u_j|$. Therefore, using the generalized dominated convergence theorem, $F_a(u_j) \ra F_a(u)$ in $L^1(K)$. Hence  
\begin{align}\label{compact2}
    \intRn |g_{\ep}||F_a(u_j)-F_a(u)| \le \norm{g_{\ep}}_{\infty} \int_{K} |F_a(u_j)-F_a(u)| \ra 0, \text{ as } j \ra \infty.
\end{align}
Next, we estimate the first integral of \eqref{compact1}. Since $(u_j)$ is bounded in $\Dtwo$, using  Remark \ref{classical embedding} we obtain
\begin{align*}
    & \intRn |g-g_{\ep}||F_a(u_j)-F_a(u)| \\
    & \le \intRn |g-g_{\ep}|\left(|F_a(u_j)| + |F_a(u)| \right)\\
    & \le C\norm{g-g_{\ep}}_{\frac{N}{4}} \left( \norm{\De u}^2_2 + \norm{\De u_j}^2_2 \right) + C \norm{g-g_{\ep}}_{\tilde{p}} \left( \norm{\De u}_2^{\ga} + \norm{\De u_j}_2^{\ga} \right) \\
    & \quad + aC \left( \norm{g-g_{\ep}}_1 \norm{g-g_{\ep}}_{\frac{N}{4}} \right)^{\frac{1}{2}} \left( \norm{\De u}_2 + \norm{\De u_j}_2 \right) \\ 
    & <C\ep,
\end{align*}
where $C$ is independent of $j.$ Therefore, from \eqref{compact1} and \eqref{compact2}, it follows  that  $N_a(u_j) \ra N_a(u)$, as $j \ra \infty$. 

 $(iii)$ Let $u_j \rightharpoonup u$ in $\D^{2,2}(\RN)$. We write 
\begin{align}\label{compact3}
     \intRn g \abs{F_{a_j}(u_j)- F_a(u)} \le \intRn g \abs{F_{a_j}(u_j)- F_a(u_j)} + \intRn g \abs{F_a(u_j)-F_{a}(u)}.
\end{align}
From the compactness of $N_a$, second integral of \eqref{compact3} converges to zero. For $g_{\ep}$ as given in the proof of $(ii)$, we write
\begin{equation}\label{compact4}
    \begin{split}
        \intRn g \abs{F_{a_j}(u_j)- F_a(u_j)} \le \intRn \abs{g-g_{\ep}}|F_{a_j}(u_j)-F_a(u_j)| \\ + \intRn |g_{\ep}||F_{a_j}(u_j)-F_a(u_j)|.
    \end{split}
\end{equation}
Using $a_j \ra a$ and the similar set of arguments as given in $(ii)$, we get $\int_{\RN} \abs{g-g_{\ep}}|F_{a_j}(u_j)-F_a(u_j)| < C\ep$ for some $C$ independent of $j.$ We require to estimate the second integral of \eqref{compact4}. Notice that $F_{a_j}(u_j)-F_a(u_j) = (a-a_j)u_j$. Therefore, since $(u_j)$ is bounded in $\Dtwo$,
\begin{align*}
   \int_{K} |F_{a_j}(u_j)-F_a(u_j)| = |a-a_j|\int_{K} |u_j| \le |a-a_j||K|^{\frac{1}{2}} \left( \int_{K} u_j^2 \right)^{\frac{1}{2}} \le C|a-a_j|,
\end{align*}
where $K$ is the support of $g_{\ep}$, and hence $\int_{K} |F_{a_j}(u_j)-F_a(u_j)| \rightarrow 0$ as $j \ra \infty$. Thus, combining \eqref{compact3} and \eqref{compact4}, we get $\int_{\RN} g F_{a_j}(u_j) \ra \int_{\RN} g F_a(u)$.
\end{proof}

Now we define the Cerami condition for a functional on $\Dtwo$, introduced in \cite{Cerami1978}. 
 
\begin{definition}
Let $J \in C^1(\D^{2,2}(\RN), \R)$ and let $c \in \R$. The functional $J$  is said to satisfy the Cerami condition ($C$-condition) at level $c$, if the following holds:

\begin{enumerate}[label={($\bf C_{\arabic*}$)}]
\setcounter{enumi}{0}
    \item \label{Cerami1} For any bounded sequence $(u_j)$ in $\Dtwo$ such that $J(u_j) \rightarrow c$ in $\R$ and $J'(u_j) \rightarrow 0$ in $(\D^{2,2}(\RN))'$, there exists a convergent subsequence of $(u_j)$.  \vspace{0.1 cm}
    \item \label{Cerami2}  There exist $\eta, \be, \rho >0$ such that 
\begin{align*}
    \norm{J'(u)}\norm{ \De u}_2 \ge \be, \; \forall \, u \in J^{-1} \left([c-\eta,c+\eta] \right) \text{ with } \norm{\De u}_2 \ge \rho.
\end{align*}
\end{enumerate}
\end{definition}

In the following proposition we show that $I_a$ satisfies \ref{Cerami1}.

\begin{proposition}\label{CC1}
Let $f$ satisfies {\rm \ref{f1}}, and let $g$ be positive and satisfies {\rm \ref{g1}}. Let $(u_j)$ be a bounded sequence in $\Dtwo$ such that $I_a(u_j) \rightarrow c \in \R$ and $I_a'(u_j) \rightarrow 0$ in $(\D^{2,2}(\RN))'$. Then $(u_j)$ has a convergent subsequence in $\Dtwo$.  
\end{proposition}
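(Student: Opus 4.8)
The plan is to show that the bounded Palais--Smale-type sequence $(u_j)$ admits a strongly convergent subsequence by exploiting the compactness of the nonlinear part $N_a$ established in Proposition~\ref{compact map}. Since $(u_j)$ is bounded in the Hilbert space $\Dtwo$, after passing to a subsequence I may assume $u_j \rightharpoonup u$ weakly in $\Dtwo$ for some $u \in \Dtwo$. The goal is to upgrade this to $\De u_j \to \De u$ strongly in $L^2(\RN)$, equivalently $\norm{\De u_j}_2 \to \norm{\De u}_2$.

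The key step is to test the (approximate) derivative against the natural test functions. First I would apply $I_a'(u_j)$ to $u_j - u$: since $I_a'(u_j) \to 0$ in $(\Dtwo)'$ and $(u_j - u)$ is bounded in $\Dtwo$, we get $\langle I_a'(u_j), u_j - u\rangle \to 0$. Expanding via the formula $\langle I_a'(u_j), v\rangle = \intRn \De u_j\,\De v - \langle N_a'(u_j), v\rangle$, this reads
\begin{align*}
    \intRn \De u_j\,(\De u_j - \De u) = \langle I_a'(u_j), u_j - u\rangle + \langle N_a'(u_j), u_j - u\rangle,
\end{align*}
so it suffices to show $\langle N_a'(u_j), u_j - u\rangle \to 0$. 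Here $\langle N_a'(u_j), u_j - u\rangle = \intRn g f_a(u_j)(u_j - u)$; using the bound $|f_a(u_j)| \le C(1 + |u_j|^{\ga-1})$ from Remark~\ref{bound}-$(i)$, Hölder's inequality, and the compact embedding $\Dtwo \hookrightarrow L^{\ga}(\RN, |g|)$ from Proposition~\ref{compact embeddings} (so that $u_j \to u$ strongly in $L^{\ga}(\RN,|g|)$), together with boundedness of $(u_j)$, one estimates $\intRn |g||f_a(u_j)||u_j - u| \to 0$ by a split-and-Hölder argument essentially identical to the one in the proof of Proposition~\ref{compact map}-$(ii)$. Consequently $\intRn \De u_j\,(\De u_j - \De u) \to 0$.

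On the other hand, weak convergence $\De u_j \rightharpoonup \De u$ in $L^2(\RN)$ gives $\intRn \De u\,(\De u_j - \De u) \to 0$. Subtracting the two limits yields $\intRn (\De u_j - \De u)^2 = \intRn \De u_j(\De u_j - \De u) - \intRn \De u(\De u_j - \De u) \to 0$, i.e. $\De u_j \to \De u$ in $L^2(\RN)$, which is exactly strong convergence $u_j \to u$ in $\Dtwo$. This completes the argument.

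The main obstacle — though a routine one given the preliminaries — is controlling the term $\intRn g f_a(u_j)(u_j - u)$ when $g$ only lies in $L^1 \cap L^\infty$ and $u_j - u$ does not converge to zero in the global $L^\ga(\RN)$ norm: one must split $g = g_\ep + (g - g_\ep)$ with $g_\ep \in \C_c(\RN)$, handle the $g_\ep$-part by local compactness (Proposition~\ref{local}-$(iii)$) and the $(g-g_\ep)$-part by the smallness of $\norm{g - g_\ep}$ in the appropriate Lebesgue norm combined with uniform bounds on $\norm{\De u_j}_2$. Since the constant $\ga \in (2, 2^{**})$ is strictly subcritical, the relevant embedding is genuinely compact and no loss of compactness occurs.
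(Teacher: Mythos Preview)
Your proposal is correct and follows essentially the same route as the paper: extract a weakly convergent subsequence, test $I_a'(u_j)$ against $u_j-u$, show $\langle N_a'(u_j), u_j-u\rangle \to 0$ via the growth bound on $f_a$ and the compact weighted embeddings of Proposition~\ref{compact embeddings}, and then conclude $\norm{\De(u_j-u)}_2 \to 0$ by combining with weak convergence. The only cosmetic difference is that the paper applies H\"older directly in the weighted spaces $L^2(\RN,g)$ and $L^{\ga}(\RN,g)$ (using the already-proved compact embedding) rather than re-running the $g = g_\ep + (g-g_\ep)$ splitting you describe in your final paragraph, so that last ``obstacle'' is already absorbed into Proposition~\ref{compact embeddings}.
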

\begin{proof}
By the reflexivity, up to a subsequence $u_j \rightharpoonup u$ in $\Dtwo$. We consider the functional $J(v) = \int_{\RN} (\De v)^2$, for $v \in \Dtwo$. Observe that $J \in C^1(\Dtwo, \R)$ and
\begin{align}\label{PS6}
    \left< J'(u_j), u_j-u \right> = \left<I_a'(u_j), u_j-u \right> + \left<N'_a(u_j), u_j-u \right>.
\end{align}
By the hypothesis, $\abs{\left<I_a'(u_j), u_j-u \right>} \le \norm{I_a'(u_j)} \norm{u_j-u} \ra 0$ as $j \ra \infty$. Now we show that $\left<N'_a(u_j), u_j-u \right> \ra 0$ as $j \ra \infty$. Using Remark \ref{bound}-$(i)$, 
\begin{equation}\label{PS7}
    \begin{split}
        \left| \left<N'_a(u_j), u_j-u \right>  \right| & \le \intRn g|f_a(u_j)| |u_j-u|   \\ & \le C \intRn g\left(|u_j| + |u_j|^{\ga -1} + 1\right)|u_j-u|.
    \end{split}
\end{equation}
Using the compact embeddings of $\Dtwo \hookrightarrow L^{\tilde{\ga}}(g, \RN)$ for $\tilde{\ga} \in [2,2^{**})$ (Proposition \ref{compact embeddings}), we get 
\begin{align*}
    &\intRn g |u_j| |u_j-u| \le \left( \intRn g u_j^2 \right)^{\frac{1}{2}} \left( \intRn g |u_j-u|^2 \right)^{\frac{1}{2}} \ra 0,  \\
    & \intRn g |u_j|^{\ga -1} |u_j-u| \le \left( \intRn g |u_j|^{\ga} \right)^{\frac{1}{\ga'}} \left( \intRn g |u_j-u|^{\ga} \right)^{\frac{1}{\ga}} \ra 0, \\
    & \intRn g|u_j-u| \le \norm{g}_1^{\frac{1}{2}} \left( \intRn g |u_j-u|^2 \right)^{\frac{1}{2}} \ra 0,
\end{align*}
as $j \ra \infty$. Therefore, from \eqref{PS7}, $\left<N'_a(u_j), u_j-u \right> \ra 0$. Thus using \eqref{PS6}, we obtain $\int_{\RN} \De u_j \, \De (u_j - u) = \left< J'(u_j), u_j-u \right> \ra 0.$ Further, since $u_j \rightharpoonup u$ in $\Dtwo$, we also have $\int_{\RN} \De u \, \De(u_j-u) \ra 0.$ Therefore, $\int_{\RN} (\De(u_j-u))^2 \ra 0$, as required.
\end{proof}

Now we prove that $I_a$ satisfies \ref{Cerami2}.
\begin{proposition}\label{bounded}
Let $f$ satisfies {\rm \ref{f1}-\ref{f3}}, and let $g$ be positive and satisfies {\rm \ref{g1}}. Then for any $c\in \R$, there exist $\eta, \be, \rho >0$ such that $\norm{I_a'(u)}\norm{\De u}_2 \ge \be$ for $u \in I_a^{-1} \left([c-\eta,c+\eta] \right)$ with $\norm{\De u}_2 \ge \rho$.
\end{proposition}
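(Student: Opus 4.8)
The plan is to argue by contradiction, combining the monotonicity estimate of Proposition~\ref{relation} with a Miyagaki--Souto type rescaling. Suppose \ref{Cerami2} fails at some level $c\in\R$. Choosing $\eta=\be=1/n$ and $\rho=n$ produces $(u_n)\subset\Dtwo$ with $I_a(u_n)\to c$, $t_n:=\norm{\De u_n}_2\to\infty$ and $\norm{I_a'(u_n)}\,t_n\to0$; in particular $\langle I_a'(u_n),u_n\rangle\to0$, which reads
\[ \tfrac12 t_n^2-\int_{\RN} gF_a(u_n)=c+o(1),\qquad \int_{\RN} g\,u_nf_a(u_n)=t_n^2+o(1). \]
Normalise $w_n:=u_n/t_n$, so $\norm{\De w_n}_2=1$; passing to a subsequence, $w_n\rightharpoonup w$ in $\Dtwo$, $w_n\to w$ a.e.\ in $\RN$, and $w_n\to w$ in $L^{\tilde\ga}(\RN,g)$ for every $\tilde\ga\in[2,2^{**})$ by Proposition~\ref{compact embeddings}.

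First I would show $w\le0$ a.e. On $\{u_n\ge0\}$ use the lower bound $f(t)\ge Mt-C_M$ from \ref{f2} (Remark~\ref{bound}-$(ii)$), and on $\{u_n<0\}$ note $u_nf_a(u_n)=a\abs{u_n}\ge0$; the second identity then gives $t_n^2+o(1)\ge M\int_{\RN} g(u_n^+)^2-(C_M+a)\int_{\RN} g\,u_n^+$, and Young's inequality (together with $\int_{\RN} g\,u_n^+\le\norm{g}_1^{1/2}\bigl(\int_{\RN} g(u_n^+)^2\bigr)^{1/2}$) yields $\int_{\RN} g(u_n^+)^2\le C t_n^2/M$ for large $n$. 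As $M$ is arbitrary, $\int_{\RN} g(w_n^+)^2\to0$; since $w_n^+\to w^+$ in $L^2(\RN,g)$ and $g>0$, we get $w^+=0$. Now, for each $n$ pick $s_n\in[0,1]$ with $I_a(s_nu_n)=\max_{s\in[0,1]}I_a(su_n)$. For any $\La>0$ and $n$ large, $\La/t_n\le1$, so $I_a(s_nu_n)\ge I_a(\La w_n)$; by compactness of $N_a$ (Proposition~\ref{compact map}-$(ii)$) and $\norm{\De(\La w_n)}_2^2=\La^2$, $I_a(\La w_n)\to\tfrac{\La^2}{2}-\int_{\RN} gF_a(\La w)=\tfrac{\La^2}{2}-a\La\int_{\RN} g\,w^-$ (using $w\le0$), which tends to $+\infty$ as $\La\to\infty$. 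Hence $I_a(s_nu_n)\to+\infty$. Because $I_a(0)=0$ and $I_a(u_n)\to c$, the maximum is eventually attained at an interior $s_n\in(0,1)$, so $\langle I_a'(s_nu_n),u_n\rangle=0$, whence $I_a(s_nu_n)=I_a(s_nu_n)-\tfrac12\langle I_a'(s_nu_n),s_nu_n\rangle=\int_{\RN} g\bigl(\tfrac12 s_nu_nf_a(s_nu_n)-F_a(s_nu_n)\bigr)$.

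To close, I would bound $I_a(s_nu_n)$ from above. Applying Proposition~\ref{relation} pointwise with $(s,t)=(u_n(x),s_nu_n(x))$ — the case $0<t\le s$ when $u_n(x)>0$ and the case $s\le t<0$ when $u_n(x)<0$ — gives $F_a(u_n)-F_a(s_nu_n)\le\tfrac{1-s_n^2}{2}\,u_nf_a(u_n)+C_R$ a.e. Multiplying by $g$, integrating, and inserting $\int_{\RN} g\,u_nf_a(u_n)=t_n^2+o(1)$, the term $\tfrac{s_n^2t_n^2}{2}$ in $I_a(s_nu_n)=\tfrac{s_n^2t_n^2}{2}-\int_{\RN} gF_a(s_nu_n)$ recombines with $\tfrac{1-s_n^2}{2}t_n^2$ into $\tfrac{t_n^2}{2}$, leaving
\[ I_a(s_nu_n)\le I_a(u_n)+C_R\norm{g}_1+o(1), \]
which is bounded. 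This contradicts $I_a(s_nu_n)\to+\infty$, and the proposition follows.

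The step I expect to demand the most care is this last chain of estimates, and the obstacle is precisely the semipositone sign. On $\{u_n<0\}$ one has $F_a(u_n)-F_a(s_nu_n)=a(1-s_n)\abs{u_n}$ and $u_nf_a(u_n)=a\abs{u_n}$, so the pointwise estimate of Proposition~\ref{relation} is essentially sharp there; one has to track the residual $\tfrac{1-s_n^2}{2}\,o(1)$ and the contribution $\int_{\{u_n<0\}}g\bigl(F_a(u_n)-F_a(s_nu_n)\bigr)$ — controlled via $\int_{\{u_n<0\}}g\abs{u_n}\le\norm{g}_1^{1/2}\bigl(\int_{\RN} g\,u_n^2\bigr)^{1/2}\le C\norm{g}_1^{1/2}t_n$ (Remark~\ref{classical embedding}) — and confirm they do not spoil the cancellation of the quadratic terms, i.e.\ that the region where $f_a\equiv -a$ is harmless in the limit. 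The remaining ingredients — ruling out $w^+\ne 0$ above, and the weak continuity $I_a(\La w_n)\to\tfrac{\La^2}{2}-\int_{\RN} gF_a(\La w)$ from Proposition~\ref{compact map} — are comparatively routine.
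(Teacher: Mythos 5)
Your proof is correct and follows the same overall strategy as the paper: argue by contradiction, normalize $w_n=u_n/\|\De u_n\|_2$, show $w\le 0$ a.e., maximize $I_a$ along the segment $[0,u_n]$ to produce an unbounded sequence $m_n=I_a(s_nu_n)$, and then cap $I_a(s_nu_n)-I_a(u_n)$ uniformly via Proposition~\ref{relation}; the contradiction is identical. The one step you handle differently is showing $w\le0$: the paper fixes a set $\Om_1$ of positive measure inside $\{w>0\}$ via Egorov's theorem, forces $u_j\ge0$ on $\Om_1$ for large $j$, and derives $1\ge M\int_{\Om_1}gw^2$ for arbitrary $M$, whereas you bypass Egorov entirely and obtain a quantitative estimate $\int_{\RN}g(w_n^+)^2\le \tfrac{2}{M}+o(1)$ from the identity $\int_{\RN}gu_nf_a(u_n)=t_n^2+o(1)$, the lower bound $f(t)\ge Mt-C_M$ on $\{u_n\ge0\}$, the sign $u_nf_a(u_n)\ge0$ on $\{u_n<0\}$, and Young's inequality; strong convergence $w_n\to w$ in $L^2(\RN,g)$ from Proposition~\ref{compact embeddings} then gives $w^+=0$. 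Your variant is a touch cleaner since it avoids the auxiliary set $\Om_1$ and works directly with $w_n^+$. One remark: the detour through the interior critical point $s_n\in(0,1)$, the identity $\langle I_a'(s_nu_n),u_n\rangle=0$, and the rewriting $I_a(s_nu_n)=\int_{\RN}g\bigl(\tfrac12 s_nu_nf_a(s_nu_n)-F_a(s_nu_n)\bigr)$ is never used in your closing estimate, which (like the paper's) needs only $s_n\in[0,1]$, the direct computation of $I_a(s_nu_n)-I_a(u_n)$, and Proposition~\ref{relation}; you could delete that paragraph without loss. Also, since $m_n\to\infty$ forces $s_n>0$ for large $n$, the application of Proposition~\ref{relation} with $(s,t)=(u_n(x),s_nu_n(x))$ is legitimate whenever $u_n(x)\neq0$, and the case $u_n(x)=0$ is trivial, so no gap arises there.
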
 

\begin{proof}
Our proof uses the method of contradiction. Suppose $(u_j)$ is a sequence in $\D^{2,2}(\RN)$ such that
\begin{align*}
    I_a(u_j) \ra c, \norm{\De u_j}_2 \ra \infty, \text{ and } \norm{I_a'(u_j)}\norm{\De u_j}_2 \ra 0, \text{ as } j \ra \infty.
\end{align*}
We set $w_j= \frac{u_j}{\norm{\De u_j}_2}$. Then $\norm{\De w_j}_2=1$. By the reflexivity, up to a subsequece, $w_j \rightharpoonup w$ in $\Dtwo$. We divide our proof into two steps. \\
\noi  \textbf{\underline{Step 1:}} We consider the set $\Om= \left\{x \in \RN : w(x)>0 \right\}$. In this step we show that $|\Om|=0$. On a contrary, we assume $|\Om|>0$. Using $\left<I_a'(u_j), u_j \right> \le \norm{I_a'(u_j)}\norm{\De u_j}_2 \ra 0$, we write
\begin{align*}
    \norm{\De u_j}^2_2 - \intRn gf_a(u_j)u_j= \ep_j, \text{ where } \ep_j \ra 0, \text{ as } j \ra \infty.
\end{align*}
From the above identity, we get
\begin{align}\label{measure01}
    1 =  \frac{1}{\norm{\De u_j}^2_2} \left\{ \intRn gf_a(u_j)u_j + \ep_j \right\}.
\end{align}
Let $\Om^+_j= \left\{ x \in \RN : u_j(x) \ge 0 \right\}.$ From the compactness of $\Dtwo \hookrightarrow L^2_{loc}(\RN)$, we infer that $w_j(x) \ra w(x)$ a.e. in $\RN$ (up to a subsequence). In particular, for any $\Om_0 \subset \Om$ with $0<|\Om_0|<\infty$, $w_j(x) \ra w(x)$ a.e. in $\Om_0$. Hence, by the Egorov's theorem, there exists $\Om_1 \subset \Om_0$ with $|\Om_1|>0$ such that $(w_j)$ converges to $w$ uniformly on $\Om_1$. Thus there exists $j_1 \in \N$ such that for $j \ge j_1$, $w_j \ge 0$ and hence $u_j \ge 0$ a.e. on $\Om_1$. Therefore, $\Om_1 \subset \Om^+_j$, for all \,$ j \ge j_1$. Further, $f_a(u_j)u_j \ge 0$ a.e. on $\RN \setminus \Om^+_j$, and using Remark \ref{bound}-$(ii)$, for any $M>0$ there exists $C_M>0$ such that $f_a(u_j)\ge Mu_j-(C_M+a)$ a.e. on $\Om^+_j$. Therefore, from \eqref{measure01}, for all $j \ge j_1$, we obtain  
\begin{equation}\label{measure02}
    \begin{split}
        1  & \ge \frac{1}{\norm{\De u_j}^2_2} \left\{ \int_{\Om^+_j} gf_a(u_j)u_j + \ep_j \right\} \\ 
    & \ge M \int_{\Om^+_j} g \frac{u_j^2}{\norm{\De u_j}^2_2} -  \frac{(C_M + a)}{\norm{\De u_j}_2} \int_{\Om^+_j} g \frac{u_j}{\norm{\De u_j}_2} + \frac{\ep_j}{\norm{\De u_j}^2_2} \\
    & \ge M \int_{\Om_1} g w_j^2 -  \frac{(C_M + a)}{\norm{\De u_j}_2} \int_{\Om^+_j} g w_j + \frac{\ep_j}{\norm{\De u_j}^2_2}.
    \end{split}
\end{equation}
From the compactness of $G$ (Proposition \ref{compact map}-$(i)$), it follows that $\int_{\Om_1} gw_j^2 \ra \int_{\Om_1} gw^2$. Moreover, using the H\"{o}lder's inequality and \eqref{cembed2}, 
$$\int_{\Om^+_j} g w_j  \le \norm{g}^{\frac{1}{2}}_1 \left( \int_{\RN} gw_j^2 \right)^{\frac{1}{2}} \le C \left( \norm{g}_1 \norm{g}_{\frac{N}{4}} \right)^{\frac{1}{2}},$$ 
where $C=C(N)$. Now taking the limit as $j \ra \infty$ in \eqref{measure02}, 
\begin{align*}
    1 \ge M \int_{\Om_1} gw^2, \text{ for sufficiently large } M>0,
\end{align*}
a contradiction. Thus, $|\Om|=0$. \\
\noi  \textbf{\underline{Step 2:}} For a fixed $j \in \N$, we set $m_j= \sup \left\{ I_a(tw_j): 0 \le t \le \norm{\De u_j}_2 \right\}$. Since the map $t \mapsto I_a(tw_j)$ is continuous on $[0,\norm{\De u_j}_2]$, there exists $t_j \in [0,\norm{\De u_j}_2]$ such that $m_j=I_a(t_jw_j)$. First, we show that $m_j \ra \infty,$ as $j \ra \infty$. Since the sequence $(u_j)$ is unbounded, there exists $j_2 \in \N$ so that for $j \ge j_2$, $\norm{\De u_j}_2 \ge M$, and hence $m_j \ge I_a(Mw_j)$. Using the compactness of $N_a$ (Proposition \ref{compact map}-$(ii)$) and $|\Om| = 0$ (Step 1), we get 
\begin{align*}
    \lim_{j \ra \infty} I_a(Mw_j) & = \frac{M^2}{2} - \lim_{j \ra \infty} \intRn gF_a(Mw_j) \\
    & = \frac{M^2}{2} - \intRn gF_a(Mw) \\
    & = \frac{M^2}{2} + aM\int_{\Om^c} gw.
\end{align*}
Notice that, the term $\frac{M^2}{2} + aM\int_{\Om^c} gw$ is sufficiently large. Therefore,
\begin{align}\label{unbounded1}
    \lim_{j \ra \infty} m_j = \infty. 
\end{align}
Next, for each $j \in \N$ we calculate
\begin{align}\label{PS4}
    I_a(t_jw_j)-I_a(u_j) = \frac{t_j^2-\norm{\De u_j}^2_2}{2} + \intRn g \left(F_a(u_j) - F_a(t_jw_j) \right). 
\end{align}
Set $s_j=\frac{t_j}{\norm{\De u_j}_2}$. Then $s_j \in [0,1]$ and $s_ju_j=t_jw_j$. It is easy to observe that  $0<s_ju_j(x) \le u_j(x)$, if $u_j(x) > 0$ and $0>s_ju_j(x) \ge u_j(x)$, if $u_j(x) < 0$. Hence using Proposition \ref{relation}, if $u_j(x) \neq 0$, then 
\begin{align*}
    F_a(u_j(x)) - F_a(s_ju_j(x)) & \le \frac{(u_j(x))^2-s_j^2(u_j(x))^2}{2u_j(x)}f_a(u_j(x)) +C_R \\ & \le \frac{1-s_j^2}{2} u_j(x)f_a(u_j(x)) +C_R.
\end{align*}
Further, if $u_j(x)=0$, then $F_a(u_j(x)) - F_a(s_ju_j(x)) =0$. Thus the above inequality yields
\begin{equation}\label{PS5}
    \begin{split}
         \intRn g(x) \big(F_a(u_j(x)) - F_a(s_ju_j(x)) \big) \, \dx  \le \intRn g(x) \\
        \left( \frac{1-s_j^2}{2} u_j(x)f_a(u_j(x)) +C_R \right) \, \dx. 
    \end{split}
\end{equation}
Therefore, \eqref{PS4} and \eqref{PS5} yield
\begin{align*}
    I_a(t_jw_j)-I_a(u_j) & \le \frac{1-s_j^2}{2} \left( -\norm{\De u_j}^2_2 + \intRn gu_jf_a(u_j) \right) + C_R \norm{g}_1 \\
    & = \frac{s_j^2-1}{2} \left< I_a'(u_j), u_j \right> + C_R \norm{g}_1.
\end{align*}
Now, since $\left< I_a'(u_j), u_j \right> \ra 0$, it follows that $(I_a(t_jw_j)-I_a(u_j))$ is a bounded sequence. On the other hand, using \eqref{unbounded1} and the fact that $(I_a(u_j))$ is bounded (as $\lim_{j \ra \infty}I_a(u_j) = c$), we conclude that $I_a(t_jw_j)-I_a(u_j) \ra \infty$, as $j \ra \infty$, resulting in a contradiction. Therefore, such a unbounded sequence $(u_j)$ does not exist. 
\end{proof}

Next we prove the mountain pass geometry for $I_a$. For $\rho>0$, we consider the following set
\begin{align*}
    \S_{\rho}= \left\{u: \Dtwo : \norm{\De u}_2 = \rho \right\}.
\end{align*}

\begin{lemma}\label{MPG}
Let $f$ satisfies {\rm \ref{f1}-\ref{f2}}, and let $g$ be positive and satisfies {\rm \ref{g1}}. Then the following holds: 
\begin{enumerate}
    \item[(i)] There exist $\rho>0, \be= \be(\rho)>0$, and $a_1=a_1(\rho)>0$ such that if $a \in (0, a_1)$, then $I_a(u) \ge \be$ for every $u\in \S_{\rho}$.
    \item[(ii)] There exists $\tilde{v} \in \Dtwo$ with $\norm{\De \tilde{v}}_2 > \rho$ such that $I_a(\tilde{v})<0$.
\end{enumerate}
\end{lemma}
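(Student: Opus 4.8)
The plan is to establish the two geometric conditions separately, using the subcritical growth of $f$ (hypothesis \ref{f1}) for part (i) and the superlinearity (hypothesis \ref{f2}) for part (ii). For part (i), fix $u \in \S_\rho$, so $\norm{\De u}_2 = \rho$. Starting from the definition $I_a(u) = \tfrac12 \rho^2 - N_a(u)$, I would bound $N_a(u) = \intRn g F_a(u)$ from above. By Remark \ref{bound}-$(i)$ we have the pointwise estimate $|F_a(t)| \le \tfrac{\ep}{2} t^2 + \tfrac{C}{\ga}|t|^\ga + a|t|$ for a constant $C = C(\ep)$, valid for all $t$ (the term $a|t|$ coming from the $-a$ in $f_a$). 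Integrating against $g$ and invoking the embedding inequality \eqref{cembed2} of Remark \ref{classical embedding} — applied with $\tilde\ga = 2$, with $\tilde\ga = \ga$, and (via Cauchy--Schwarz to split off $|u|$) with $\tilde\ga = 2$ again — yields
\begin{align*}
    N_a(u) \le C_1 \ep \, \rho^2 + C_2 \rho^\ga + C_3 \, a \, \rho,
\end{align*}
where $C_1, C_2, C_3$ depend only on $N$ and $\norm{g}$ in the relevant Lebesgue norms (all finite since $g$ satisfies \ref{g1}). Hence
\begin{align*}
    I_a(u) \ge \left( \tfrac12 - C_1 \ep \right) \rho^2 - C_2 \rho^\ga - C_3 \, a \, \rho.
\end{align*}
Now I choose the parameters in order: first pick $\ep$ small so that $\tfrac12 - C_1 \ep \ge \tfrac14$; then, since $\ga > 2$, pick $\rho > 0$ small enough that $\tfrac14 \rho^2 - C_2 \rho^\ga \ge \tfrac18 \rho^2$; finally set $a_1 = \tfrac{\rho}{16 C_3}$, so that for $a \in (0, a_1)$ we have $C_3 a \rho \le \tfrac{1}{16}\rho^2$, giving $I_a(u) \ge \tfrac{1}{16}\rho^2 =: \be > 0$. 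Since $\be$ and $\rho$ and $a_1$ depend only on the fixed data, this proves (i).

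For part (ii), fix any nonnegative $\phi \in \C_c^\infty(\RN) \setminus \{0\}$ with $\int g \phi^2 > 0$ (possible since $g$ is positive), and consider $I_a(t\phi)$ for $t > 0$. We have
\begin{align*}
    I_a(t\phi) = \frac{t^2}{2} \intRn (\De \phi)^2 - \intRn g \, F_a(t\phi).
\end{align*}
On the support of $\phi$, for $t$ large the argument $t\phi$ is large wherever $\phi > 0$, so I use the superlinear primitive bound from Remark \ref{bound}-$(ii)$: for any $M > 0$ there is $C_M > 0$ with $F(s) \ge M s^2 - C_M$ for all $s \ge 0$, hence $F_a(s) = F(s) - as \ge M s^2 - C_M - as$ for $s \ge 0$. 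Therefore
\begin{align*}
    I_a(t\phi) \le \frac{t^2}{2} \norm{\De \phi}_2^2 - M t^2 \intRn g \phi^2 + C_M \intRn g + a\, t \intRn g \phi = t^2 \left( \tfrac12 \norm{\De \phi}_2^2 - M \intRn g \phi^2 \right) + a t \, C_4 + C_M C_5.
\end{align*}
Choosing $M$ large enough that $\tfrac12 \norm{\De \phi}_2^2 - M \intRn g\phi^2 < 0$, the leading term is negative and dominates as $t \to \infty$; hence $I_a(t\phi) \to -\infty$. In particular there is $t_* > 0$ large with $\norm{\De (t_*\phi)}_2 = t_* \norm{\De \phi}_2 > \rho$ and $I_a(t_*\phi) < 0$, and we take $\tilde v = t_*\phi$.

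The only mildly delicate point is bookkeeping in part (i): one must commit to the order of choices — $\ep$ first (to beat the quadratic coefficient), then $\rho$ (to make the subcritical term negligible relative to $\rho^2$), and only then $a_1$ (in terms of the already-fixed $\rho$) — so that the threshold $a_1(\rho)$ genuinely depends only on $\rho$ and the fixed data, as the statement requires. Part (ii) is routine once a suitable test function is fixed; note that the same $M$ in Remark \ref{bound}-$(ii)$ is used uniformly, and the constants $C_4 = \intRn g\phi$, $C_5 = \intRn g$ are finite by \ref{g1} and independent of $a$ and $t$.
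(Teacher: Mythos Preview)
Your proposal is correct and follows essentially the same approach as the paper: for (i) you use the same pointwise bound $F_a(t) \le \tfrac{\ep}{2}t^2 + \tfrac{C}{\ga}|t|^\ga + a|t|$ from Remark~\ref{bound}-(i), the same embedding estimates \eqref{cembed2}, and the same order of parameter choices ($\ep$, then $\rho$, then $a_1$); for (ii) you use the same superlinear primitive bound from Remark~\ref{bound}-(ii) on a fixed nonnegative test function and let $t\to\infty$. The only cosmetic differences are that the paper normalizes $\norm{\De\phi}_2=1$ and phrases the choice of $\rho$ via the first zero $\rho_1$ of the auxiliary function $A(\rho)$, whereas you pick explicit numerical fractions.
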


\begin{proof}
$(i)$ Let $\rho>0$ and $u \in \S_{\rho}$. Choose $\ep < (\norm{g}_{\frac{N}{4}}B_g)^{-1}$ where $B_g$ is the best constant of \eqref{cembed2} for $\tilde{\ga}=2$. Then using $F_a(u) \le \frac{\ep}{2} u^2 + \frac{C}{\ga} |u|^{\ga} + a|u|$ (Remark \ref{bound}-$(i)$) and \eqref{cembed2}, we write
\begin{equation}\label{MP1}
    \begin{split}
    I_a(u) & \ge \frac{\norm{\De u}^2_2}{2} - \frac{\ep}{2} \intRn gu^2 - \frac{C}{\ga}  \intRn g|u|^{\ga} - a \intRn g|u|  \\
     & \ge \norm{\De u}^2_2 \left\{\frac{1}{2} - \frac{\ep}{2} B_g \norm{g}_{\frac{N}{4}} - C \norm{\De u}^{\ga-2}_2 \right\} - aC\norm{\De u}_2  \\
     & = A(\rho) - aC\rho,
    \end{split}
\end{equation}
where $A(\rho)= \rho^2 (\frac{1}{2} - \frac{\ep}{2} B_g \norm{g}_{\frac{N}{4}} - C \rho^{\ga -2})$ and $C$ is independent of $a$. Let $\rho_1$ be the first nontrivial zero of $A$. Choose 
\begin{align*}
    0< a_1< \frac{A(\rho)}{C\rho} \text{ and } \be=A(\rho) - a_1C\rho, \text{ where } \rho< \rho_1.
\end{align*}
Therefore, from \eqref{MP1}, $I_a(u) \ge \be$ for all $a < a_1$. 

 $(ii)$ We consider $\phi \in \C^2(\RN), \phi \ge 0$, and $\norm{\De \phi}_2=1$. For any $M>0$, using Remark \ref{bound}-$(ii)$, $F_a(t \phi) = F(t\phi)-at\phi \ge M(t\phi)^2-(C_M + at\phi)$, for all $t>0$. Hence 
\begin{align*}
    I_a(t \phi) \le t^2 \left( \frac{1}{2} - M \intRn g \phi^2 \right) + at \intRn g \phi + C_M \norm{g}_1.
\end{align*}
For $M>\left( 2\int_{\RN} g \phi^2 \right)^{-1}$, $I_a(t \phi) \ra -\infty$, as $t \ra \infty$, i.e., there exists $t_1>\rho$ so that $I_a(t \phi)<0$ for $t > t_1$. Thus $\tilde{v} = t\phi$ with $t>t_1$ is the required function. 
\end{proof}

\section{Proof of the main theorem}\label{sec3}
The first part of this section is devoted to discussing the existence and various qualitative properties of the solutions for \eqref{SP}. In the second part, we prove the positivity of the solutions.

\subsection{Existence and qualitative properties of solutions}
This subsection considers the existence, regularity, and uniform boundedness of the mountain pass solutions of \eqref{SP}. For the existence of solutions, we use the following theorem due to Schechter in \cite[Theorem 2.1]{S1991}.

\begin{theorem}[Mountain-Pass Theorem]\label{MPT}
Let $J \in C^1(\D^{2,2}(\RN), \R)$. We assume that
\begin{enumerate}
    \item[(i)]  $J(0)=0$ and $J$ satisfies the $C$-condition. 
    \item[(ii)] There exist $\rho>0$ and $\be>0$ such that $\inf \left\{J(u) : u \in \S_{\rho} \right\} \ge \be$.
    \item[(iii)] There exists $v \in \Dtwo$ with $\norm{\De v}_2 > \rho$ such that $J(v) < 0$. 
\end{enumerate}
Let $\Ga_v := \left\{ \ga \in C([0,1], \Dtwo) : \ga(0) = 0 \text{ and } \ga(1) = v  \right\}$. 
Then $$c_v:= \inf_{\ga \in \Ga_v} \max_{s \in [0,1]} J(\ga(s)),$$ 
is a critical value of $J$. Further, $c_v \ge \inf_{u \in \S_{\rho}} J(u)$.
\end{theorem}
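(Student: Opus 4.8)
This is the classical mountain pass theorem of Ambrosetti--Rabinowitz type with the Palais--Smale condition replaced by the Cerami $C$-condition (cf.\ \cite{Cerami1978, S1991}); the plan is the standard minimax argument together with a deformation lemma adapted to the $C$-condition.

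\emph{Step 1: the minimax value sits above the sphere.} For each $\ga \in \Ga_v$ the map $s \mapsto \norm{\De \ga(s)}_2$ is continuous on $[0,1]$, vanishes at $s = 0$ (since $\ga(0) = 0$) and equals $\norm{\De v}_2 > \rho$ at $s = 1$; by the intermediate value theorem there is $s_0$ with $\ga(s_0) \in \S_\rho$, so $\max_{s \in [0,1]} J(\ga(s)) \ge J(\ga(s_0)) \ge \inf_{u \in \S_\rho} J(u) \ge \be$. Taking the infimum over $\ga \in \Ga_v$ gives $c_v \ge \inf_{u \in \S_\rho} J(u) \ge \be > 0$, which already establishes the last assertion of the theorem.

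\emph{Step 2: a quantitative absence-of-critical-points estimate.} Arguing by contradiction, suppose $c_v$ is not a critical value. I first claim there exist $\bar\ep \in (0, c_v)$ and $\si > 0$ with
\[
\norm{J'(u)}\,\bigl(1 + \norm{\De u}_2\bigr) \ge \si \qquad \text{whenever } \abs{J(u) - c_v} \le \bar\ep .
\]
Otherwise there is a sequence $(u_j)$ with $J(u_j) \to c_v$ and $\norm{J'(u_j)}(1 + \norm{\De u_j}_2) \to 0$. If $(u_j)$ has a bounded subsequence, then along it $\norm{J'(u_j)} \to 0$, and \ref{Cerami1} yields a convergent subsequence whose limit is a critical point at level $c_v$, a contradiction. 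If instead $\norm{\De u_j}_2 \to \infty$, then for $j$ large $\norm{\De u_j}_2 \ge \rho$ and $J(u_j) \in [c_v - \eta, c_v + \eta]$, so \ref{Cerami2} gives $\norm{J'(u_j)}\norm{\De u_j}_2 \ge \be$, hence $\norm{J'(u_j)}(1 + \norm{\De u_j}_2) \ge \be$, again a contradiction. In particular $J'$ is nonzero on the strip $\mathcal A := \{\,\abs{J - c_v} \le \bar\ep\,\}$, so $J$ carries a locally Lipschitz pseudo-gradient field $V$ on a neighbourhood of $\mathcal A$.

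\emph{Step 3: deformation and conclusion.} Fix $\ep \in (0, \bar\ep)$ and a locally Lipschitz cut-off $\chi$ with $\chi \equiv 1$ on $\{\abs{J - c_v} \le \ep\}$ and $\chi \equiv 0$ off $\mathcal A$, and integrate the flow
\[
\tfrac{d}{dt}\eta(t,u) = -\,\chi\bigl(\eta(t,u)\bigr)\,\bigl(1 + \norm{\De \eta(t,u)}_2\bigr)\,\frac{V(\eta(t,u))}{\norm{V(\eta(t,u))}}, \qquad \eta(0,u) = u .
\]
The weight $1 + \norm{\De \eta}_2$ bounds the speed of the flow by $1 + \norm{\De \eta}_2$, so $\norm{\De \eta(t,u)}_2$ grows at most exponentially in $t$ and $\eta$ is defined for all $t \ge 0$; the pseudo-gradient inequalities together with Step~2 give, on $\{\abs{J - c_v} \le \ep\}$, the uniform descent $\tfrac{d}{dt}J(\eta(t,u)) \le -\delta$ for a fixed $\delta > 0$ depending only on $\si$, while $J$ is nonincreasing along the flow everywhere. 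Hence for a suitable fixed time $T = T(\ep, \si)$ one has $J(\eta(T, u)) \le c_v - \ep$ whenever $J(u) \le c_v + \ep$, and $\eta(T, \cdot)$ is the identity on $\{J \le c_v - \bar\ep\}$. Now choose $\ga \in \Ga_v$ with $\max_s J(\ga(s)) \le c_v + \ep$; since $J(0) = 0$ and $J(v) < 0$ both lie below $c_v - \bar\ep$, the endpoints are fixed, so $\tilde\ga := \eta(T, \ga(\cdot))$ still belongs to $\Ga_v$, yet $\max_s J(\tilde\ga(s)) \le c_v - \ep < c_v$, contradicting the definition of $c_v$. Therefore $c_v$ is a critical value of $J$.

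\emph{Expected main difficulty.} The routine parts are Step~1 (connectedness via the intermediate value theorem) and the case split in Step~2. The crux is the deformation in Step~3: the vector field must be chosen so that the flow is globally defined \emph{and} still decreases $J$ at a rate bounded below uniformly across the (unbounded) strip $\mathcal A$, even though $\norm{J'(u)}$ may be arbitrarily small where $\norm{\De u}_2$ is large — this is exactly where the weight $1 + \norm{\De u}_2$ and the second Cerami condition \ref{Cerami2} are indispensable.
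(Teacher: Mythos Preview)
The paper does not prove this theorem at all: it is stated as the classical mountain-pass theorem under the Cerami condition and attributed to Schechter \cite[Theorem~2.1]{S1991}, and then simply invoked in the proof of Theorem~\ref{Existence}. So there is no ``paper's own proof'' to compare against.

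Your proposal, on the other hand, supplies a complete and correct proof along the standard deformation-lemma route adapted to the $C$-condition. Step~1 is the usual intermediate-value argument and gives the bound $c_v \ge \inf_{\S_\rho} J$. The dichotomy in Step~2 correctly combines \ref{Cerami1} and \ref{Cerami2} to obtain the uniform lower bound $\norm{J'(u)}(1+\norm{\De u}_2)\ge\si$ on a strip around $c_v$; this is exactly the quantitative input that replaces the Palais--Smale bound. The weighted pseudo-gradient flow in Step~3 is the right construction: the factor $1+\norm{\De\eta}_2$ ensures global existence via a Gronwall bound, while the product with $\norm{J'(\eta)}$ (coming from the pseudo-gradient inequalities) yields the uniform descent rate $\delta$ on the inner strip. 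The choice $\bar\ep<c_v$ guarantees $J(0)=0$ and $J(v)<0$ both lie strictly below $c_v-\bar\ep$, so the endpoints are fixed by the flow. The argument is sound; one tiny cosmetic point is that ``identity on $\{J\le c_v-\bar\ep\}$'' should strictly be ``identity on $\{J< c_v-\bar\ep\}$'', but since both endpoints satisfy the strict inequality this does not affect the conclusion.

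In short: the paper treats the statement as a black box from the literature, whereas you have reproduced the standard proof. Your write-up would be appropriate as an appendix or as a pointer to how \cite{S1991} proceeds.
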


\begin{theorem}\label{Existence}
 Let $f$ satisfies {\rm \ref{f1}-\ref{f3}}. Let $g$ be positive and satisfies {\rm \ref{g1}}. Then there exists $a_1>0$ such that for each $a \in (0,a_1)$, \eqref{SP} admits a nontrivial solution. 
\end{theorem}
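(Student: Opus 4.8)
The plan is to verify the hypotheses of the Mountain-Pass Theorem (Theorem \ref{MPT}) for the functional $I_a$ on $\Dtwo$, then read off the conclusion. All the pieces have already been assembled in Section \ref{sec2}; what remains is to check the $C$-condition and invoke the abstract result.

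\textbf{Step 1: $I_a \in C^1$ and $I_a(0)=0$.} We have already observed (just before Proposition \ref{compact map}) that $I_a \in C^1(\Dtwo,\R)$ with $\langle I_a'(u),v\rangle = \intRn \De u\,\De v - \intRn g f_a(u)v$, and $I_a(0) = 0 - \intRn g F_a(0) = 0$ since $F_a(0) = 0$. A critical point $u$ of $I_a$ satisfies $\langle I_a'(u),v\rangle = 0$ for all $v$, i.e. $\intRn \De u\,\De v = \intRn g f_a(u) v$, which is precisely the weak formulation of \eqref{SP}.

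\textbf{Step 2: the $C$-condition.} This is the main point. I must show $I_a$ satisfies \ref{Cerami1} and \ref{Cerami2} at the relevant level. Condition \ref{Cerami2} is exactly Proposition \ref{bounded}, which holds for every $c \in \R$ under \ref{f1}-\ref{f3} and \ref{g1}. Condition \ref{Cerami1} is Proposition \ref{CC1}: any bounded sequence $(u_j)$ with $I_a(u_j) \to c$ and $I_a'(u_j) \to 0$ in $(\Dtwo)'$ has a convergent subsequence. Combining the two, if $(u_j)$ is a Cerami sequence at level $c$ in the usual sense (i.e. $I_a(u_j) \to c$ and $(1 + \norm{\De u_j}_2)\norm{I_a'(u_j)} \to 0$), then \ref{Cerami2} forces $(u_j)$ to be bounded: otherwise along a subsequence $\norm{\De u_j}_2 \to \infty$ and $\norm{I_a'(u_j)}\norm{\De u_j}_2 \to 0$ while $I_a(u_j) \in [c-\eta,c+\eta]$ eventually with $\norm{\De u_j}_2 \ge \rho$, contradicting $\norm{I_a'(u)}\norm{\De u}_2 \ge \be$. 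Once boundedness is in hand, Proposition \ref{CC1} gives the convergent subsequence. Thus $I_a$ satisfies the $C$-condition at every level $c$.

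\textbf{Step 3: mountain-pass geometry and conclusion.} Lemma \ref{MPG}$(i)$ produces $\rho > 0$, $\be > 0$, and $a_1 > 0$ such that for $a \in (0,a_1)$ one has $\inf\{I_a(u): u \in \S_\rho\} \ge \be$, which is hypothesis $(ii)$ of Theorem \ref{MPT}; Lemma \ref{MPG}$(ii)$ gives $\tilde v \in \Dtwo$ with $\norm{\De \tilde v}_2 > \rho$ and $I_a(\tilde v) < 0$, which is hypothesis $(iii)$. Hence for each $a \in (0,a_1)$, Theorem \ref{MPT} applies to $J = I_a$ and yields a critical value $c_{\tilde v} \ge \inf_{u \in \S_\rho} I_a(u) \ge \be > 0$. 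The corresponding critical point $u_a$ is a weak solution of \eqref{SP}; since $c_{\tilde v} = I_a(u_a) \ge \be > 0 = I_a(0)$, we have $u_a \neq 0$, so the solution is nontrivial. This gives the desired $a_1 > 0$. The only mild subtlety — and the place to be careful — is bookkeeping the definition of Cerami sequence against the two-part definition used here, i.e. deducing boundedness of Cerami sequences from \ref{Cerami2} before applying \ref{Cerami1}; everything else is a direct citation of the propositions and lemma already proved.
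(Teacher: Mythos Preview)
Your proof is correct and follows essentially the same approach as the paper's own proof: verify that $I_a$ satisfies the $C$-condition via Propositions \ref{CC1} and \ref{bounded}, check the mountain-pass geometry via Lemma \ref{MPG}, and apply Theorem \ref{MPT}. The paper's proof is terser (it simply cites these results and reads off the critical point), while you add the explicit bookkeeping in Step~2 deducing boundedness of Cerami sequences from \ref{Cerami2}; this is harmless extra detail, since the paper's $C$-condition is \emph{defined} as the conjunction of \ref{Cerami1} and \ref{Cerami2}, so no such deduction is strictly required to invoke Theorem~\ref{MPT}.
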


\begin{proof}
Let $a_1, \beta, \tilde{v}$ be as given in Lemma \ref{MPG}. Then for $a \in (0, a_1)$, using Proposition \ref{CC1}-\ref{bounded} and Lemma \ref{MPG}, $I_a$ satisfies all the properties of the Mountain-Pass theorem. Therefore, by Theorem \ref{MPT}, there exists $u_a \in \Dtwo$ satisfying
\begin{align}\label{MP2} 
    I_a(u_a) = c_{a,\tilde{v}}:=\inf_{\ga \in \Ga_{\tilde{v}}} \max_{s \in [0,1]} I_a(\ga(s)) \ge \be, \text{ and } I_a'(u_a)=0.
\end{align}
Thus, $u_a$ is a nontrivial solution of \eqref{SP}.  
\end{proof}
 
\begin{remark}
Let $f$ satisfies the following (RA) type nonlinearities:
\begin{align*}
    \text{there exist } \theta > 2 \text{ and } t_0>0 \text{ such that }
    0<\theta F(t)\le tf(t), \; \forall \, t \ge t_0.
\end{align*}
Then there exist $C_1,C_2 >0$ such that $F(t) \ge C_1t^{\theta} - C_2$ and $f(t) \ge C_1 \theta t^{\theta-1}$, for $t \in \R^+$. Therefore, $\lim_{t \ra \infty} \frac{f(t)}{t}= \infty$ (which is \ref{f2}). Thus, if $f$ satisfies \ref{f1} and $g$ satisfies \ref{g1}, then by Theorem \ref{Existence}, for each $a \in (0,a_1)$, \eqref{SP} admits a mountain pass solution $u_a$.
\end{remark}

\begin{definition}
We define the following set in $ \Dtwo$:
\begin{align*}
    \M = \left\{ v \in \Dtwo: v \text{ is a mountain pass solution for } \eqref{SP}  \right\}.
\end{align*}
\end{definition}
From Theorem \ref{Existence}, $u_a \in \M$ for every $a \in (0,a_1)$. In the rest of this section we study the properties of functions in $\M$. Now we show that $u_a$ is continuous and locally bounded, using the following regularity result \cite[Theorem 1.1]{Medereski} by Mederski and Siemianowski. 

\begin{theorem}[Regularity Theorem]\label{Mederski}
Let $N \ge  5$, $h \in L_{loc}^{\frac{N}{4}}(\RN)$ be non-negative, and $\tilde{f}$ be a Carath\'{e}odory function that satisfies $|\tilde{f}(x,s)| \le h(x) (1+|s|)$ for $s \in \R$ and a.e. $x \in \RN$. Let $u \in W_{loc}^{2,2}(\RN)$ be a weak solution of $\De^2 u = \tilde{f}(x,u) \text{ in } \RN$. Then  $u \in W_{loc}^{4,q}(\RN)$ for any $q \in [1, \infty)$.
\end{theorem}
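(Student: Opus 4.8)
The statement is a Brezis--Kato-type regularity theorem for the biharmonic operator, and the first point to notice is that the direct Sobolev bootstrap stalls: since $u\in W^{2,2}_{loc}(\RN)\hookrightarrow L^{2^{**}}_{loc}(\RN)$ and $h\in L^{N/4}_{loc}(\RN)$ has exactly the critical integrability, one only obtains $\tilde f(\cdot,u)\in L^{2N/(N+4)}_{loc}(\RN)$, so the Calder\'on--Zygmund estimate for $\De^2$ gives back $u\in W^{4,2N/(N+4)}_{loc}(\RN)$, which re-embeds into $L^{2^{**}}_{loc}(\RN)$ with no gain. The plan is therefore in two stages: (i) a Brezis--Kato iteration on the \emph{fourth-order} equation, using the fact that $\|h\,\mathbf{1}_{\{h>M\}}\|_{L^{N/4}(B)}\to 0$ as $M\to\infty$ on every ball $B$, to upgrade $u$ to $L^q_{loc}(\RN)$ for all $q<\infty$; and (ii) a routine elliptic bootstrap converting this into $u\in W^{4,q}_{loc}(\RN)$ for all $q<\infty$. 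Working with the fourth-order form (rather than the second-order system $-\De u=v$, $-\De v=\tilde f$) is what matches the exponents: the energy space $W^{2,2}$ embeds into $L^{2^{**}}$ and the H\"older conjugate of $2^{**}/2$ is exactly $N/4$, the integrability of $h$.

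For stage (i) I would fix a ball $B_{2R}$ and a cut-off $\eta\in\C_c^\infty(B_{2R})$ with $\eta\equiv1$ on $B_R$, and for $\beta\ge1$ and a truncation level $T>0$ test the identity $\int\De u\,\De\varphi=\int\tilde f(x,u)\varphi$ against $\varphi=\eta^4\,u\,u_T^{2\beta-2}$, where $u_T=\min\{|u|,T\}$ (a compactly supported $W^{2,2}$ test function by the truncation). Expanding $\De\varphi$ and integrating by parts should produce a coercive quantity comparable to $\int\eta^4 u_T^{2\beta-2}(\De u)^2$, i.e.\ a bound on $\|\eta^2 u_T^{\beta}\|_{W^{2,2}}$ after absorbing Leibniz terms in $\nabla\eta$; the first-order cross terms in $\nabla u$ are reabsorbed through the gradient Caccioppoli identity obtained by integrating $\int\eta^4 u_T^{2\beta-2}|\nabla u|^2$ by parts, and the sign-indefinite second-order term of the form $\int\eta^4 u_T^{2\beta-4}u\,\De u\,|\nabla u|^2$ is dominated by $\varepsilon\int\eta^4 u_T^{2\beta-2}(\De u)^2$ plus a quartic gradient term controlled by a Gagliardo--Nirenberg interpolation. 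On the right one uses $|\tilde f(x,u)|\le h(1+|u|)$, the elementary bound $(1+|u|)u_T^{2\beta-1}\le 1+2u_T^{2\beta}$, and the splitting $h=h\,\mathbf{1}_{\{h\le M\}}+h\,\mathbf{1}_{\{h>M\}}$: by H\"older with exponents $(N/4,N/(N-4))$ and $W^{2,2}\hookrightarrow L^{2^{**}}$, the $h\,\mathbf{1}_{\{h>M\}}$-contribution is $\le C\,\|h\,\mathbf{1}_{\{h>M\}}\|_{L^{N/4}(B_{2R})}$ times the coercive term and is absorbed once $M$ is large, whereas the $h\,\mathbf{1}_{\{h\le M\}}$-contribution is $\le M\int\eta^4 u_T^{2\beta}+C_M$, which is finite by the integrability of $u$ known from the previous stage. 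Letting $T\to\infty$ then yields $|u|^{\beta}\in W^{2,2}_{loc}(\RN)$, hence $u\in L^{\beta\,2^{**}}_{loc}(\RN)$, and iterating (each step multiplying the exponent by the fixed factor $2^{**}/2>1$) gives $u\in L^q_{loc}(\RN)$ for all $q\in[1,\infty)$.

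For stage (ii), once $u\in L^q_{loc}(\RN)$ for every $q<\infty$, H\"older with $h\in L^{N/4}_{loc}(\RN)$ gives $\tilde f(\cdot,u)\in L^r_{loc}(\RN)$ for every $r<N/4$, so the Calder\'on--Zygmund theory for $\De^2$ yields $u\in W^{4,r}_{loc}(\RN)$ for all such $r$; running the chain of Sobolev embeddings $W^{4,r}\hookrightarrow W^{3,Nr/(N-r)}\hookrightarrow\cdots$, which raises the integrability of $u$ and its lower-order derivatives, removes the restriction $r<N/4$ and gives $u\in W^{4,q}_{loc}(\RN)$ for all $q\in[1,\infty)$.

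The hard part is clearly stage (i): the energy estimate must be uniform in the truncation parameter $T$, its dependence on $\beta$ must be tracked (polynomial growth is acceptable for the Moser-type iteration), and it must be carried out using only the assumed $W^{2,2}_{loc}$-regularity of $u$; above all, the sign-indefinite second-order term has no counterpart in the classical second-order Brezis--Kato argument and is the genuinely new feature that makes a dedicated result for $\De^2$ necessary. Everything after $u\in L^q_{loc}(\RN)$ for all $q$ is standard.
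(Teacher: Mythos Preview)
The paper does not prove this theorem: it is quoted verbatim from Mederski and Siemianowski \cite[Theorem~1.1]{Medereski} and then invoked as a black box in Proposition~\ref{Regularity2}. There is therefore no ``paper's own proof'' to compare your proposal against.

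That said, your two-stage strategy --- a Brezis--Kato/Moser iteration to reach $u\in L^q_{loc}(\RN)$ for every $q<\infty$, followed by a Calder\'on--Zygmund bootstrap to $W^{4,q}_{loc}(\RN)$ --- is the standard route for such results and is, as far as one can tell from the cited preprint, essentially what Mederski and Siemianowski do. Your diagnosis that $h\in L^{N/4}_{loc}(\RN)$ is exactly critical (so the naive bootstrap stalls at $L^{2^{**}}_{loc}$) and that the sign-indefinite second-order cross term is the genuinely new obstacle relative to the classical second-order Brezis--Kato argument are both correct.

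One technical point deserves more care than you give it. The test function $\varphi=\eta^4\,u\,u_T^{2\beta-2}$ must belong to $W^{2,2}(\RN)$ with compact support in order to be admissible in the weak formulation of $\De^2 u=\tilde f(x,u)$, and truncations of $W^{2,2}$ functions are \emph{not} automatically in $W^{2,2}$ (in contrast to the $W^{1,2}$ case used in the second-order Brezis--Kato argument): $u_T=\min\{|u|,T\}$ is only guaranteed to be in $W^{1,2}_{loc}$. In practice this is repaired by replacing the hard cut-off by a smooth one (a $C^2$ function $G_T$ with $G_T(s)=s$ for $|s|\le T$ and $G_T$ bounded), or by an additional approximation layer; the a~priori estimates survive the passage to the limit, but your parenthetical ``a compactly supported $W^{2,2}$ test function by the truncation'' is not justified as stated and is precisely where a fourth-order proof has to work harder than its second-order analogue.
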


\begin{proposition}\label{Regularity2}
Let $f,g,a_1$ be as given in Theorem \ref{Existence}. Let $u_a \in \M$, for all $a \in (0,a_1)$. Then $u_a \in W_{loc}^{4,q}(\RN)$ where $q \in [1, \infty)$.
\end{proposition}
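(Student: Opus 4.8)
The plan is to verify that the mountain pass solution $u_a$ satisfies the hypotheses of the Regularity Theorem (Theorem \ref{Mederski}), and then simply invoke it. The target equation is $\De^2 u_a = g(x) f_a(u_a)$ in $\RN$, so I would set $\tilde{f}(x,s) = g(x) f_a(s)$ and check the two structural requirements: that $u_a \in W^{2,2}_{loc}(\RN)$, and that $\tilde f$ is a Carathéodory function dominated by $h(x)(1+|s|)$ for some non-negative $h \in L^{N/4}_{loc}(\RN)$.

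First I would record that $u_a \in W^{2,2}_{loc}(\RN)$: this is immediate from $u_a \in \D^{2,2}(\RN)$ together with the embedding $\D^{2,2}(\RN) \hookrightarrow W^{2,2}_{loc}(\RN)$ established in Proposition \ref{local}-$(ii)$. Next, the domination bound: by Remark \ref{bound}-$(i)$, since $a \in (0,a_1)$ we have $|f_a(s)| \le C(1+|s|^{\gamma-1})$ for all $s \in \R$ with $\gamma \in (2, 2^{**})$. The obstacle here is that the growth exponent $\gamma - 1$ may exceed $1$, so $|g(x)f_a(s)|$ is not literally of the form $h(x)(1+|s|)$ with a fixed $h$; the standard fix is to absorb the solution-dependent part into the coefficient. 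Concretely, write
\begin{align*}
 |g(x) f_a(u_a(x))| \le C\, g(x)\big(1 + |u_a(x)|^{\gamma-1}\big) \le C\, g(x)\big(1 + |u_a(x)|^{\gamma-2}\big)\big(1 + |u_a(x)|\big),
\end{align*}
and set $h(x) := C\, g(x)\big(1 + |u_a(x)|^{\gamma - 2}\big)$. Then $\tilde f(x,s) = g(x) f_a(s)$ — or rather the relevant comparison function — is dominated by $h(x)(1+|s|)$ in the sense needed, and it remains only to check $h \in L^{N/4}_{loc}(\RN)$.

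To verify $h \in L^{N/4}_{loc}(\RN)$: on any compact $K$, since $g$ satisfies \ref{g1} we have $g \in L^\infty(\RN)$, so it suffices to show $g|u_a|^{\gamma-2} \in L^{N/4}(K)$, i.e. $|u_a|^{(\gamma-2)N/4} \in L^1(K)$, i.e. $u_a \in L^{(\gamma-2)N/4}(K)$. Since $\gamma < 2^{**} = \frac{2N}{N-4}$, one computes $(\gamma - 2)\frac{N}{4} < (2^{**}-2)\frac{N}{4} = \big(\frac{2N}{N-4} - 2\big)\frac{N}{4} = \frac{8}{N-4}\cdot\frac{N}{4} = \frac{2N}{N-4} = 2^{**}$, so $u_a \in L^{2^{**}}(\RN) \subset L^{(\gamma-2)N/4}_{loc}(\RN)$ by the Sobolev embedding $\D^{2,2}(\RN) \hookrightarrow L^{2^{**}}(\RN)$. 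Hence $h \in L^{N/4}_{loc}(\RN)$, and $h$ is non-negative since $g$ is positive. One should also note that $\tilde f(x,s) = g(x) f_a(s)$ is Carathéodory: measurable in $x$ since $g \in L^1_{loc}$, and continuous in $s$ since $f_a \in C(\R)$.

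With all hypotheses of Theorem \ref{Mederski} in place — and noting $u_a$ is a weak solution of $\De^2 u_a = \tilde f(x, u_a)$ by definition of weak solution of \eqref{SP} — the conclusion $u_a \in W^{4,q}_{loc}(\RN)$ for every $q \in [1,\infty)$ follows at once. The only genuine subtlety is the reduction of the superlinear right-hand side to the admissible linear-growth form by folding $|u_a|^{\gamma-2}$ into the coefficient $h$; everything else is a routine embedding chain. I would also remark that, combined with Morrey's embedding $W^{4,q}_{loc} \hookrightarrow C(\RN)$ for $q > N/4$, this yields $u_a \in C(\RN) \cap L^\infty_{loc}(\RN)$, which is the statement used elsewhere (Remark \ref{bounded1.1}) and in Theorem \ref{result}-(i).
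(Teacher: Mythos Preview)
Your proposal is correct and follows essentially the same route as the paper: define $h$ comparable to $g(1+|u_a|^{\gamma-2})$, verify $h\in L^{N/4}_{loc}(\RN)$ via the Sobolev embedding $\D^{2,2}(\RN)\hookrightarrow L^{2^{**}}(\RN)$ and the computation $(\gamma-2)\tfrac{N}{4}<2^{**}$, note $u_a\in W^{2,2}_{loc}(\RN)$ from Proposition~\ref{local}-$(ii)$, and invoke Theorem~\ref{Mederski}. The only cosmetic difference is that the paper writes $h=\dfrac{g(1+|u_a|^{\gamma-1})}{1+|u_a|}$ and then bounds it by $g(1+|u_a|^{\gamma-2})$, and checks $h\in L^{N/4}(\RN)$ globally via H\"older, whereas you go directly to $h=Cg(1+|u_a|^{\gamma-2})$ and check membership only in $L^{N/4}_{loc}$, which is all Theorem~\ref{Mederski} requires.
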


\begin{proof}
For $ a \in (0,a_1)$, we consider the following function: $$h = \frac{g \left( 1+|u_a|^{\ga-1} \right)}{1+ |u_a|}, \text{ where } \ga \in (2,2^{**}).$$
Observe that $h \le g (1+ |u_a|^{\ga -2})$. We set $r= \frac{8}{(N-4)(\ga-2)}$. For $\ga \in (2,2^{**})$ we have $r>1$.  Using the H\"{o}lder's inequality with the conjugate pair $(r,r')$ and the embedding $\Dtwo \hookrightarrow L^{2^{**}}(\RN)$, we get 
\begin{equation*}
    \begin{split}
        \intRn h^{\frac{N}{4}} & \le C \intRn g^{\frac{N}{4}} \left( 1 + |u_a|^{\frac{(\ga-2)N}{4}} \right) \\
        & \le C \left( \intRn g^{\frac{N}{4}} + \left( \intRn g^{\frac{N}{4}r'} \right)^{\frac{1}{r'}} \left( \intRn |u_a|^{2^{**}} \right)^{\frac{1}{r}} \right) \\
        & \le C \left(\norm{g}_{\frac{N}{4}}^{\frac{N}{4}}+ \norm{g}_{\frac{N r'}{4}}^{\frac{N}{4}}\left( \intRn (\De u_a)^2 \right)^{\frac{N}{(N-4)r}} \right),
    \end{split}
\end{equation*}
for some $C > 0$. Therefore, $h \in L^{\frac{N}{4}}(\RN)$. Further, $|f_a(u_a)| \le C(1+|u_a|^{\gamma -1})$ where $C=C(C_f,t_2,a_1)$ (using Remark \ref{bound}-$(i)$). Hence $g|f_a(u_a)| \le C h (1+ |u_a|).$
Moreover, $u_a \in W_{loc}^{2,2}(\RN)$ (by Proposition \ref{local}-$(ii)$). Thus, by taking $\tilde{f}(x,s)= g(|x|)f_a(s)$ we see that all the hypothesis of Theorem \ref{Mederski} are satisfied. Therefore, by Theorem \ref{Mederski}, $u_a \in W_{loc}^{4,q}(\RN)$ for any $q \in [1, \infty)$. 
\end{proof}

\begin{remark}\label{bounded1.1}
Let $a \in (0,a_1)$. Then for $q>\frac{N}{4}$, using Proposition \ref{Regularity2} and the Sobolev embedding we get $u_a \in C_{loc}^{3, \al}(\RN)$ where $\al \in (0,1)$. Hence it is evident that $u_a \in L_{loc}^{\infty}(\RN) \cap C(\RN)$.
\end{remark}

Next, we discuss the uniform boundedness of $(u_a)$. First, we prove that $\{u_a, a \in (0,a_2)\}$ (for some $a_2 \in (0,a_1)$) are uniformly bounded in $\Dtwo$. For that we require the following lemma.

\begin{lemma}\label{bound1}
Let $f, g,a_1$ be as given in Theorem \ref{Existence}. Let $u_a \in \M$, for all $a \in (0,a_1)$. Then there exists $C>0$ such that $I_a(u_a) \le C$ for all $a \in (0,a_1)$.
\end{lemma}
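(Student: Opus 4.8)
The plan is to bound $I_a(u_a)$ from above by evaluating the mountain pass level along a convenient path, using the characterization $I_a(u_a)=c_{a,\tilde v}=\inf_{\ga\in\Ga_{\tilde v}}\max_{s\in[0,1]}I_a(\ga(s))$ from \eqref{MP2}. The natural choice is the linear path $\ga(s)=s\,\tilde v$, where $\tilde v=t\phi$ is the function constructed in Lemma \ref{MPG}$(ii)$ with $\phi\in\C^2(\RN)$, $\phi\ge0$, $\norm{\De\phi}_2=1$. Since $c_{a,\tilde v}\le\max_{s\in[0,1]}I_a(s\tilde v)=\max_{\t\in[0,t]}I_a(\t\phi)$, it suffices to bound $\sup_{\t\ge0}I_a(\t\phi)$ uniformly in $a\in(0,a_1)$.

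The key computation is to estimate $I_a(\t\phi)$ from above in a way that is uniform in $a$. Using $F_a(\t\phi)=F(\t\phi)-a\t\phi\ge -a\t\phi\ge -a_1\t\phi$ together with the lower bound $F(\t\phi)\ge M(\t\phi)^2-C_M$ from Remark \ref{bound}$(ii)$ (for a fixed $M$ large enough, say $M>\left(2\int_{\RN}g\phi^2\right)^{-1}$, as in the proof of Lemma \ref{MPG}$(ii)$), I get
\begin{align*}
    I_a(\t\phi)=\frac{\t^2}{2}-\intRn gF_a(\t\phi)\le \t^2\left(\frac12-M\intRn g\phi^2\right)+a_1\t\intRn g\phi+C_M\norm{g}_1.
\end{align*}
The coefficient of $\t^2$ is a negative constant independent of $a$, and the coefficient of $\t$ is bounded by a constant independent of $a$ (since $a<a_1$ and $\int_{\RN}g\phi\le\norm{g}_\infty\norm{\phi}_1<\infty$, recalling $g$ satisfies \ref{g1}). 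Hence the quadratic in $\t$ on the right-hand side attains a finite maximum over $\t\in[0,\infty)$ that depends only on $M$, $\phi$, $g$, and $a_1$ — call it $C$. Therefore $I_a(u_a)=c_{a,\tilde v}\le\sup_{\t\ge0}I_a(\t\phi)\le C$ for every $a\in(0,a_1)$, which is the claim.

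The only subtlety — and the place requiring a little care rather than a genuine obstacle — is making sure the fixed data ($\phi$ and $M$) can be chosen once and for all, independently of $a$. This is fine because Remark \ref{bound}$(ii)$ gives, for each $M>0$, a constant $C_M$ with $F(t)>Mt^2-C_M$ for all $t\in\R^+$, with no reference to $a$; and the condition $M>\left(2\int_{\RN}g\phi^2\right)^{-1}$ depends only on $g$ and $\phi$. One should also note the path $s\mapsto s\tilde v$ indeed lies in $\Ga_{\tilde v}$ (it starts at $0$, ends at $\tilde v$, and is continuous), so it is admissible in the infimum defining $c_{a,\tilde v}$. With these observations the bound follows directly from the displayed inequality.
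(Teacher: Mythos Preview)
Your proof is correct and follows essentially the same route as the paper: both use the linear path $s\mapsto s\tilde v=st\phi$ in $\Ga_{\tilde v}$, apply the lower bound $F_a(\tau\phi)\ge M(\tau\phi)^2-C_M-a_1\tau\phi$ from Remark~\ref{bound}$(ii)$, and observe that the resulting upper bound for $I_a(\tau\phi)$ is a downward quadratic in $\tau$ with coefficients independent of $a$. One tiny cosmetic point: your justification $\int_{\RN}g\phi\le\norm{g}_\infty\norm{\phi}_1$ presupposes $\phi\in L^1$, which is not guaranteed for an arbitrary $\phi\in\C^2(\RN)\cap\Dtwo$; it is cleaner to invoke $\int_{\RN} g\phi\le\norm{g}_1^{1/2}\bigl(\int_{\RN} g\phi^2\bigr)^{1/2}<\infty$ via \ref{g1} and \eqref{cembed2}, or simply take $\phi\in\C_c^\infty(\RN)$.
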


\begin{proof}
For $\phi, t_1, \tilde{v}$ as given in the proof of Lemma \ref{MPG}, we define $\tilde{\ga}: [0,1] \ra \Dtwo$ by $\tilde{\ga}(s) = s\tilde{v}$. Recall that $\tilde{v}=t\phi$ for some $t>t_1$. Clearly, $\tilde{\ga} \in \Gamma_{\tilde{v}}$ and hence using Theorem \ref{Existence}, for $a \in (0,a_1)$,
\begin{align*}
    I_a(u_a) \le \max_{s \in [0,1]} I_a(\tilde{\ga}(s)) = \max_{s \in [0,1]} I_a(st\phi).
\end{align*}
Further, using Remark \ref{bound}-$(ii)$, $F_a(st\phi) = F(st\phi)- ast\phi \ge M(st\phi)^2-C_M-a_1st\phi$. Using this estimate we get
\begin{align*}
    \max_{s \in [0,1]} I_a(st\phi) \le \max_{s \in [0,1]} \left\{ s^2 t^2 \left( \frac{1}{2} - M \int_{\RN} g \phi^2 \right)  + st a_1 \int_{\RN} g \phi + C_M \norm{g}_1\right\} \le C.
\end{align*}
Therefore, $I_a(u_a) \le C$ for all $a \in (0, a_1)$.
\end{proof}

\begin{proposition}\label{bound solution}
Let $f, g,a_1,u_a$ be as given in Theorem \ref{Existence}. Then there exist $C>0$ and $a_2 \in (0,a_1)$ such that $\norm{\De u_a}_2 \le C$ for all $a \in (0, a_2)$. 
\end{proposition}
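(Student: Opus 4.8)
The plan is to argue by contradiction: suppose no such uniform bound exists, so there is a sequence $a_j \to a_* \in [0, a_1]$ and $u_j := u_{a_j} \in \M$ with $\norm{\De u_j}_2 \to \infty$. Since each $u_j$ is a mountain pass solution, $I_{a_j}'(u_j) = 0$, and by Lemma \ref{bound1} we have $I_{a_j}(u_j) \le C$ uniformly; moreover from \eqref{MP2} we know $I_{a_j}(u_j) = c_{a_j, \tilde v} \ge \be > 0$. Normalize by setting $w_j = u_j / \norm{\De u_j}_2$, so $\norm{\De w_j}_2 = 1$, and pass to a subsequence with $w_j \rightharpoonup w$ in $\Dtwo$. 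The strategy mirrors the Cerami condition argument in Proposition \ref{bounded}: first show $w \equiv 0$ a.e.\ (more precisely that $|\{w > 0\}| = 0$, and by an identical argument applied to $-w_j$ that $|\{w < 0\}| = 0$), then derive a contradiction from the mountain pass structure.

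First I would establish $|\Om| = 0$ where $\Om = \{x : w(x) > 0\}$, exactly as in Step 1 of Proposition \ref{bounded}. From $\langle I_{a_j}'(u_j), u_j \rangle = 0$ we get $\norm{\De u_j}_2^2 = \intRn g f_{a_j}(u_j) u_j$, hence
\begin{align*}
    1 = \frac{1}{\norm{\De u_j}_2^2} \intRn g f_{a_j}(u_j) u_j.
\end{align*}
Using Remark \ref{bound}-$(ii)$ (for any $M>0$, $f(t) \ge Mt - C_M$, so $f_{a_j}(u_j) \ge M u_j - (C_M + a_1)$ on $\Om_j^+ = \{u_j \ge 0\}$) together with $f_{a_j}(u_j) u_j \ge 0$ off $\Om_j^+$, Egorov's theorem, and the compactness of $G$ (Proposition \ref{compact map}-$(i)$), passing to the limit yields $1 \ge M \int_{\Om_1} g w^2$ for arbitrarily large $M$ whenever $|\Om| > 0$ — a contradiction. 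So $w = 0$ a.e.

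For the second step I would use the mountain pass characterization $I_{a_j}(u_j) = c_{a_j,\tilde v} \ge \be$. Since $\tilde\ga(s) = s\tilde v$ is an admissible path and $I_{a_j}(u_j) = c_{a_j,\tilde v} = \inf_\ga \max_s I_{a_j}(\ga(s))$, for every fixed $t > 0$ the point $t w_j$ lies on the ray along $w_j$, and one can compare $I_{a_j}(u_j)$ with $\sup_{0 \le t \le \norm{\De u_j}_2} I_{a_j}(t w_j)$: following Step 2 of Proposition \ref{bounded} verbatim, set $m_j = \sup\{I_{a_j}(t w_j) : 0 \le t \le \norm{\De u_j}_2\} = I_{a_j}(t_j w_j)$ with $t_j \in [0, \norm{\De u_j}_2]$. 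Using Proposition \ref{compact map}-$(iii)$ (which allows $a_j \to a_*$) together with $w = 0$ a.e., for fixed $M$ we get $I_{a_j}(M w_j) \to \frac{M^2}{2} - \intRn g F_{a_*}(0) = \frac{M^2}{2}$ since $F_{a_*}(0) = 0$, so $m_j \to \infty$. On the other hand, Proposition \ref{relation} gives the pointwise bound $F_{a_j}(u_j) - F_{a_j}(s_j u_j) \le \frac{1 - s_j^2}{2} u_j f_{a_j}(u_j) + C_R$ with $s_j = t_j / \norm{\De u_j}_2 \in [0,1]$, hence
\begin{align*}
    I_{a_j}(t_j w_j) - I_{a_j}(u_j) \le \frac{s_j^2 - 1}{2} \langle I_{a_j}'(u_j), u_j \rangle + C_R \norm{g}_1 = C_R \norm{g}_1,
\end{align*}
using $\langle I_{a_j}'(u_j), u_j \rangle = 0$. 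Thus $m_j - I_{a_j}(u_j)$ is bounded, while $I_{a_j}(u_j) \le C$ by Lemma \ref{bound1}, forcing $m_j$ bounded — contradicting $m_j \to \infty$.

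The main obstacle, and the place where this argument differs from Proposition \ref{bounded}, is handling the varying parameter $a_j$: one must ensure every compactness and convergence statement (the compactness of $G$, the limit of $N_{a_j}(M w_j)$, the pointwise inequality from Proposition \ref{relation} with $C_R$ independent of $a$) holds uniformly in $a \in (0, a_1)$ or survives passage to $a_j \to a_*$. Proposition \ref{compact map}-$(iii)$ is precisely designed for this, and the constant $C_R$ in Proposition \ref{relation} is uniform in $a$ because it depends only on $\ep$, $C$, $\ga$, $R$; so the contradiction goes through and the desired $a_2 \in (0, a_1)$ and uniform constant $C$ exist. (One technical point to check: since we only know $a_j \to a_* \in [0,a_1]$, if $a_* = a_1$ the mountain pass level $c_{a_j,\tilde v} \ge \be$ still holds for each $a_j \in (0,a_1)$, which is all that is used.)
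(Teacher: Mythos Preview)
Your approach is essentially identical to the paper's: argue by contradiction, normalize, reproduce Step~1 of Proposition~\ref{bounded} to get $|\{w>0\}|=0$, then reproduce Step~2 using Proposition~\ref{relation} and Lemma~\ref{bound1} to bound $m_j - I_{a_j}(u_j)$ while forcing $m_j\to\infty$. The paper does exactly this, with the only difference that it takes $a_j\to 0$ directly (which the negation of the statement actually forces: take $a_2=1/n$, $C=n$) rather than allowing $a_j\to a_*\in[0,a_1]$.

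One small slip to flag: your claim that $|\{w<0\}|=0$ follows ``by an identical argument applied to $-w_j$'' does not work, because $f_a$ is not odd --- on $\{u_j<0\}$ one has $f_{a_j}(u_j)u_j=-a_j u_j$, which carries no superlinear growth to exploit. This does not damage the proof, however. Either observe (as the paper does) that $a_*=0$, whence $F_0(Mw)=0$ on $\{w\le 0\}$ and the limit is exactly $M^2/2$; or keep your general $a_*$ and note that with only $w\le 0$ a.e.\ one gets $\int g F_{a_*}(Mw)=-a_*M\int gw$, which is linear in $M$, so $\tfrac{M^2}{2}-\int g F_{a_*}(Mw)\to\infty$ anyway and $m_j\to\infty$ still follows.
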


\begin{proof}
Our proof uses the method of contraction. On a contrary, assume that there no such $a_2$ exist. Then there exists a sequence $(a_j)$ in $(0,a_1)$, such that $ a_j \ra 0,$ and $\norm{\De u_{a_j}}_2 \ra \infty,$ as $j \ra \infty.$ Set $w_j= u_{a_j}\norm{\De u_{a_j}}^{-1}_2$. Then $w_j \rightharpoonup w$ in $\Dtwo$. 
For each $j \in \N$, since $ \left< I_{a_j}'(u_{a_j}), u_{a_j} \right>= 0$, we have 
\begin{align*}
   \norm{\De u_{a_j}}^2_2 = \intRn g f_{a_j}(u_{a_j})u_{a_j}. 
\end{align*}
Now following the same arguments as given in Step 1 of Proposition \ref{bounded}, we can obtain $\abs{\{w>0\}} = 0$. 
Set $m_{a_j}= \max \left\{ I_{a_j}(tw_j): 0 \le t \le \norm{\De u_{a_j}}_2 \right\}$. Then $m_{a_j} = I_{a_j}(t_{a_j} w_j)$ for some $ 0 \le t_{a_j} \le \norm{\De u_{a_j}}_2$. Since $(u_{a_j})$ is unbounded in $\Dtwo$, for any $M>0$ there exists $j_1 \in \N$ so that $\norm{\De u_{a_j}}_2 \ge M$ and $m_{a_j} \ge I_{a_j}(Mw_j)$ for $j \ge j_1$. Further, using Proposition \ref{compact map}-$(iii)$ and $\abs{\{w>0\}} = 0$, we get
\begin{align*}
    \lim_{j \ra \infty} I_{a_j}(Mw_j) = \frac{M^2}{2} - \lim_{j \ra \infty}\intRn gF_{a_j}(Mw_j) =  \frac{M^2}{2} - \intRn gF_0(Mw) = \frac{M^2}{2}.
\end{align*}
From the above identity it follows that $I_{a_j}(t_{a_j} w_j) \ra \infty$, and hence using Lemma \ref{bound1}, $I_{a_j}(t_{a_j} w_j) - I_{a_j}(u_{a_j}) \ra \infty$, as $j \ra \infty$ . On the other hand, for $s_j=t_{a_j} \norm{\De u_{a_j}}^{-1}_2$, using Proposition \ref{relation} and \eqref{PS5} with $a=a_j$, we obtain
\begin{align*}
    I_{a_j}(t_{a_j}w_j) - I_{a_j}(u_{a_j}) 
    & \le  \frac{s_j^2-1}{2} \left< I_{a_j}'(u_{a_j}), u_{a_j} \right> +C_R \norm{g}_1  =C_R\norm{g}_1,
    \end{align*}
a contradiction. Thus there must exists $a_2 \in (0,a_1)$ such that $\norm{\De u_a}_2 \le C$ for all $a \in (0, a_2)$.  
\end{proof}

In the following proposition, we prove that $\{u_a, a \in (0,a_2)\} \subset L^{\infty}(\RN)$ and $\{u_a\big|_{B_1^c}, a \in (0,a_2)\}$ is uniformly bounded. 

\begin{proposition}\label{Regularity1}
Let $f, g$ be as given in Theorem \ref{Existence}, and $a_2$ be as given in Proposition \ref{bound solution}.  Let $u_a \in \M$, for every $a \in (0,a_2)$. In addition, we assume that $g$ satisfies {\rm \ref{g2}}. Then $u_a \in L^{\infty}(\RN)$. Moreover, there exists $C>0$ such that $\norm{u_a}_{L^{\infty}(B_1^c)} \le C$, for all $a \in (0,a_2)$.
\end{proposition}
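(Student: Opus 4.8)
The plan is to run a Moser-type / De Giorgi bootstrap that is uniform in $a$, using the Riesz potential representation from Proposition~\ref{fundamental} together with the structural growth bound $|f_a(u_a)| \le C(1+|u_a|^{\ga-1})$ from Remark~\ref{bound}-$(i)$ and the uniform $\Dtwo$-bound $\norm{\De u_a}_2 \le C$ from Proposition~\ref{bound solution}. First I would observe that $g f_a(u_a) \in L^q(\RN)$ for every $q \in (1, \tfrac{2N}{N+4})$ by Remark~\ref{Riesz0.1}, so by Proposition~\ref{fundamental} we have the pointwise identity $u_a(x) = \mathcal{R}_4 \intRn \frac{g(y) f_a(u_a(y))}{|x-y|^{N-4}}\,\dy$ a.e. in $\RN$, hence $|u_a(x)| \le \mathcal{R}_4 \intRn \frac{g(y)(1+|u_a(y)|^{\ga-1})}{|x-y|^{N-4}}\,\dy$ with the constant $C$ here independent of $a \in (0,a_2)$. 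The term coming from the constant $1$ is handled by splitting $g = g\mathbf{1}_{B_1} + g\mathbf{1}_{B_1^c}$: on $B_1$ use $g \in L^\infty$, and on $B_1^c$ use \ref{g2} with $\de = 1$ to get a bound that is uniform in $x$; combined with \ref{g1} this contributes a fixed finite constant to $\norm{u_a}_\infty$.

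The main work is the nonlinear term $\intRn \frac{g(y)|u_a(y)|^{\ga-1}}{|x-y|^{N-4}}\,\dy$. Here I would feed the starting integrability $u_a \in L^{2^{**}}(\RN)$ (with norm controlled uniformly by $\norm{\De u_a}_2 \le C$, via $\Dtwo \hookrightarrow L^{2^{**}}$) into the convolution. Writing $h_a := g|u_a|^{\ga-1}$, H\"older with exponent $\tfrac{2^{**}}{\ga-1}$ on $|u_a|^{\ga-1}$ and the conjugate on $g$ (legitimate since $g \in L^s$ for all $s$ by \ref{g1}) gives $h_a \in L^{q_0}$ with $q_0 = \tfrac{2^{**}}{\ga-1} = \tfrac{2N}{(N-4)(\ga-1)} > 1$ (strict because $\ga < 2^{**}$), with norm uniformly bounded in $a$. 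If $q_0 < \tfrac{N}{4}$, Proposition~\ref{convolution weak} / Remark~\ref{Hardy potential} gives $|x|^{4-N} * h_a \in L^{r_0}$ with $r_0 = \tfrac{N q_0}{N - 4 q_0}$, i.e. $u_a \in L^{r_0}$ uniformly, and one checks the gain $r_0 > 2^{**}$ strictly, so iterating $u_a \in L^{r_k}$ with $r_{k+1} = \tfrac{N r_k/(\ga-1)}{N - 4 r_k/(\ga-1)}$ (again inserting the $g$-H\"older step at each stage) the exponents increase by a fixed multiplicative factor and reach, after finitely many steps, the regime $r_k/(\ga-1) \ge \tfrac{N}{4}$; at that last step the convolution of a weak-$L^{N/(N-4)}$ kernel with an $L^{p}$, $p > N/4$, function lands in $L^\infty$, and in fact one gets a bound $\norm{u_a}_{L^\infty(\RN)} \le C$ uniform over $a \in (0,a_2)$, since every constant in the iteration depended only on $N$, $\ga$, $\norm{g}$ in various $L^s$, and $C$ from Proposition~\ref{bound solution}. (The only delicate bookkeeping is checking the exponents never stall, i.e. that $r_{k+1}/r_k$ stays bounded below by a constant $>1$; this is a short monotonicity computation using $2 < \ga < 2^{**}$.)

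For the second assertion — uniform boundedness of $u_a$ on $B_1^c$, which is the claim actually used later for the passage-to-the-limit argument — I would reexamine the representation for $x \in B_1^c$ and exploit \ref{g2} with $\de = 2^{**}$. Split the integral $\intRn \frac{g(y)(1+|u_a(y)|^{\ga-1})}{|x-y|^{N-4}}\,\dy$ and apply H\"older in $y$ with exponents $(2^{**}, (2^{**})')$: the $|u_a|^{\ga-1}$ piece is controlled by $\norm{u_a}_{2^{**}}^{\ga-1} \le C$ (uniform), while $\left( \intRn \frac{g(y)^{(2^{**})'}}{|x-y|^{(N-4)(2^{**})'}}\,\dy \right)^{1/(2^{**})'}$ must be shown bounded uniformly in $x \in B_1^c$ — this is exactly the content of \ref{g2} (after checking $(2^{**})' = \tfrac{2N}{N+4}$ and adjusting the kernel exponent by a further H\"older split against $g \in L^s$ to match the precise power $(N-4)\de$ appearing in \ref{g2}, or by simply noting $(N-4)(2^{**})' \le (N-4)\cdot 2^{**}$ and interpolating). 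Actually the cleanest route is: once $\norm{u_a}_{L^\infty(\RN)} \le C$ is in hand from the first part, write $g f_a(u_a) \in L^1 \cap L^\infty$ uniformly, and for $x \in B_1^c$ estimate $|u_a(x)| \le \mathcal{R}_4 \int_{B_{|x|/2}(x)} \frac{|g f_a(u_a)|}{|x-y|^{N-4}}\,\dy + \mathcal{R}_4 \int_{B_{|x|/2}(x)^c} \cdots$, bounding the near part by $\norm{g f_a(u_a)}_\infty \cdot |x|^{4}$ — no good — so instead keep the weighted form and use \ref{g2} directly as above; I expect the $\de = 2^{**}$ case of \ref{g2} to be the precise hypothesis designed for this step. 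The main obstacle, therefore, is purely bookkeeping: making every constant in the Moser iteration manifestly independent of $a$ (which follows because $a < a_2 < a_1$ enters only through the harmless additive constant in $|f_a(u_a)| \le C(1+|u_a|^{\ga-1})$) and verifying the exponent recursion terminates; the analytic inputs (Riesz representation, weak Young, Sobolev embedding, the two $\de$-cases of \ref{g2}) are all already available.
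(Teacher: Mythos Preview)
Your bootstrap approach is correct and in fact yields more than the proposition claims: a \emph{uniform} bound $\norm{u_a}_{L^\infty(\RN)}\le C$ for all $a\in(0,a_2)$, from which the stated $L^\infty(B_1^c)$ bound follows trivially. This is a genuinely different route from the paper's. The paper does \emph{not} iterate: for $x\in B_1^c$ it splits the Riesz integral over $B_1(x)$ and $B_1^c(x)$, applies H\"older with the pair $(2^{**},(2^{**})')$ on the near piece and invokes \ref{g2} with $\de=2^{**}$ to control $\int_{B_1(x)} g(y)^{2^{**}}|x-y|^{-(N-4)2^{**}}\,\dy$, while the far piece is handled by $|x-y|^{4-N}\le 1$ together with $g\in L^1$ and the uniform $\Dtwo$-bound; this gives $\norm{u_a}_{L^\infty(B_1^c)}\le C$ in one step. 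The remaining assertion $u_a\in L^\infty(\RN)$ is then obtained not by any estimate in this proof but by invoking Remark~\ref{bounded1.1}, which rests on the Brezis--Kato type regularity of Mederski--Siemianowski (Theorem~\ref{Mederski}). So the paper's argument is shorter and non-iterative but leans on \ref{g2} and on that external regularity result, and never produces a uniform global bound; your iteration is self-contained, uses only \ref{g1} and the strict subcriticality $\ga<2^{**}$ (which is exactly what keeps $r_{k+1}/r_k>1$), and delivers the stronger uniform conclusion.

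One small gap to patch: your claim that ``the convolution of a weak-$L^{N/(N-4)}$ kernel with an $L^p$, $p>N/4$, function lands in $L^\infty$'' is false as a general statement---H\"older with $p>N/4$ controls only the near part $|x-y|<1$; for the tail $|x-y|\ge 1$ you also need $h_a\in L^q$ for some $q<N/4$ (e.g.\ $q=1$, which you have since $g\in L^1$ and $|u_a|^{\ga-1}\in L^{r_k/(\ga-1)}$ at every stage). Your second paragraph is unnecessarily tangled: once the uniform global bound is in hand, the $B_1^c$ statement is immediate and there is no need to revisit \ref{g2} at all.
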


\begin{proof}
Let $a \in (0,a_2)$. Since $u_a$ is a weak solution of \eqref{SP}, using Remark \ref{Riesz0.1} and Proposition \ref{fundamental}, we write
\begin{align*}
    u_a(x) & = \mathcal{R}_4 \intRn \frac{g(y) f_a(u_a(y))}{|x-y|^{N-4}} \, \dy \; \text{ a.e. in } \RN.
\end{align*}
We split 
\begin{align}\label{Riesz1}
    \intRn \frac{g(y) f_a(u_a(y))}{|x-y|^{N-4}} \, \dy = \int_{B_1(x)} \frac{g(y) f_a(u_a(y))}{|x-y|^{N-4}} \, \dy + \int_{B_1^c(x)} \frac{g(y) f_a(u_a(y))}{|x-y|^{N-4}} \, \dy.
\end{align}
Taking $\de=2^{**}$, we use the H\"{o}lder's inequality with the conjugate pair $(\de,\de')$ to estimate the first integral of \eqref{Riesz1} as
\begin{equation*}
    \begin{split}
        \int_{B_1(x)} \frac{g(y) |f_a(u_a(y))|}{|x-y|^{N-4}} \, \dy \le \left( \int_{B_1(x)} \frac{g(y)^{\de}}{|x-y|^{(N-4)\de}}  \, \dy\right)^{\frac{1}{\de}} \\ \left( \int_{B_1(x)} \left( |f_a(u_a(y))| \right)^{\de'} \, \dy \right)^{\frac{1}{\de'}}.
    \end{split}
\end{equation*}
Now since $g$ satisfies \ref{g2}, 
\begin{align*}
  \int_{B_1(x)} \frac{g(y)^{\de}}{|x-y|^{(N-4)\de}}  \, \dy \le \intRn \frac{g(y)^{\de}}{|x-y|^{(N-4)\de}}  \, \dy \le C_g|x|^{(4-N)\de} \le C_g, \text{ for  } x \in B^c_1.
\end{align*}
From Remark \ref{bound}-$(i)$, we have $|f_a(u_a)| \le C (1+ |u_a|^{2^{**}-1})$ where $C=C(C_f,t_2,a_2)$. Hence using the embedding $\Dtwo \hookrightarrow L^{2^{**}}(\RN)$ and Proposition \ref{bound solution} we get
\begin{align*}
     \int_{B_1(x)} \left( |f_a(u_a(y))| \right)^{\de'} \, \dy & \le C2^{\de'-1} \int_{B_1(x)} \left( 1 + |u_a(y)|^{2^{**}} \right) \, \dy \\
     & \le C2^{\de'-1} \left(|B_1(x)| + \norm{u_a}_{2^{**}}^{2^{**}} \right) \\
     & \le C2^{\de'-1} \left( |B_1(0)| + \norm{\De u_a}_2^{2^{**}} \right) \\
     & \le C,
\end{align*}
for some $C$ which is independent of $a$. Next we estimate the second integral of \eqref{Riesz1}. Using Remark \ref{classical embedding}, $\Dtwo \hookrightarrow L^{2^{**}}(\RN)$, and Proposition \ref{bound solution},
 \begin{align*}
     \int_{B_1^c(x)} \frac{g(y) |f_a(u_a(y))|}{|x-y|^{N-4}} \, \dy & \le  \int_{B_1^c(x)} g(y) |f_a(u_a(y))| \, \dy \\
     & \le C \int_{\RN} g(y) \left( 1 +  |u_a(y)|^{2^{**}-1} \right) \, \dy \\
     & \le C \left(\norm{g}_1 + \norm{g}_1^{\frac{1}{2^{**}}} \norm{g}_{\infty}^{\frac{1}{(2^{**})'}} \norm{\De u_a}_2 \right) \\
     & \le C,
 \end{align*}
where $C$ does not depend on $a$. Therefore, $\abs{u_a(x)} \le C$ for all $x \in B_1^c$. Hence $\norm{u_a}_{L^{\infty}(B_1^c)} \le C$ for all $a\in (0,a_2)$. Further using Remark \ref{bounded1.1} we conclude that $u_a \in L^{\infty}(\RN)$.
\end{proof}

Now we establish a uniform lower bound for the mountain pass solutions in $L^{\infty}(\RN)$.

\begin{proposition}\label{lowerbound}
Let $f,g,a_2,u_a$ be as given in Proposition \ref{Regularity1}. Then there exist $\tilde{a}_2 \in (0,a_2)$ and $\be_1 > 0$ such that $\norm{u_a}_{\infty} \ge \be_1$, for all $a \in (0,\tilde{a}_2)$.
\end{proposition}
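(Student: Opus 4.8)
The plan is to argue by contradiction: suppose no such $\tilde a_2$ and $\be_1$ exist, so there is a sequence $a_j \to 0$ in $(0,a_2)$ with $\norm{u_{a_j}}_{\infty} \to 0$. I want to show this forces $u_{a_j} \to 0$ in $\Dtwo$, and then derive a contradiction with the mountain pass level being bounded below by $\be > 0$ (the $\be$ from Lemma \ref{MPG}, which is uniform for $a < a_1$). Concretely, from \eqref{MP2} we have $I_{a_j}(u_{a_j}) = c_{a_j,\tilde v} \ge \be$ for all $j$, so it suffices to prove $I_{a_j}(u_{a_j}) \to 0$, which contradicts $\be>0$.

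First I would test the equation against $u_{a_j}$ itself: since $\langle I_{a_j}'(u_{a_j}), u_{a_j}\rangle = 0$, we get
\begin{align*}
    \norm{\De u_{a_j}}_2^2 = \intRn g f_{a_j}(u_{a_j}) u_{a_j}.
\end{align*}
Now I would split the right-hand side and estimate using $\norm{u_{a_j}}_\infty \to 0$ together with \ref{f1}: for $\ep>0$ fixed, once $\norm{u_{a_j}}_\infty$ is small enough (say $\le t_1(\ep)$ from Remark \ref{bound}-(i)), we have $|f(u_{a_j})| \le \ep |u_{a_j}|$ on the set where $u_{a_j} \ge 0$, and $|f_{a_j}(u_{a_j})| \le \ep|u_{a_j}| + a_j$ everywhere. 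Hence
\begin{align*}
    \norm{\De u_{a_j}}_2^2 \le \ep \intRn g u_{a_j}^2 + a_j \intRn g |u_{a_j}| \le \ep B_g \norm{g}_{\frac N4} \norm{\De u_{a_j}}_2^2 + a_j C \norm{\De u_{a_j}}_2,
\end{align*}
using \eqref{cembed2} for $\tilde\ga = 2$ and H\"older's inequality for the second term (as in \eqref{MP1}). Choosing $\ep$ so that $\ep B_g \norm{g}_{\frac N4} < \tfrac12$, this gives $\norm{\De u_{a_j}}_2 \le C a_j \to 0$, so $u_{a_j} \to 0$ in $\Dtwo$.

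It then remains to pass to the limit in $I_{a_j}(u_{a_j})$. Write $I_{a_j}(u_{a_j}) = \tfrac12 \norm{\De u_{a_j}}_2^2 - \intRn g F_{a_j}(u_{a_j})$; the first term goes to $0$, and for the second I would use $|F_{a_j}(u_{a_j})| \le C(u_{a_j}^2 + |u_{a_j}|^\ga) + a_j|u_{a_j}|$ from Remark \ref{bound}-(i) together with Remark \ref{classical embedding} to bound it by $C(\norm{\De u_{a_j}}_2^2 + \norm{\De u_{a_j}}_2^\ga) + a_j C \norm{\De u_{a_j}}_2 \to 0$. Therefore $I_{a_j}(u_{a_j}) \to 0$, contradicting $I_{a_j}(u_{a_j}) \ge \be > 0$. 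The main (minor) obstacle is making the threshold argument with \ref{f1} careful: one must fix $\ep$ first, then use that $\norm{u_{a_j}}_\infty \to 0$ eventually drops below $t_1(\ep)$, and keep track that all the embedding constants ($B_g$, $\norm{g}_{N/4}$, $\norm{g}_1$, $\norm{g}_\infty$) are independent of $a$; everything else is a routine reworking of estimates already appearing in Lemma \ref{MPG} and Proposition \ref{bounded}.
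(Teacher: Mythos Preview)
Your argument is correct. The contradiction scheme is clean: once $\norm{u_{a_j}}_\infty$ drops below the $t_1(\ep)$ threshold from \ref{f1}, the pointwise bound $|f_{a_j}(u_{a_j})|\le \ep|u_{a_j}|+a_j$ is valid on all of $\RN$ (both signs of $u_{a_j}$), and the Nehari identity then forces $\norm{\De u_{a_j}}_2 \le C a_j$, from which $I_{a_j}(u_{a_j})\to 0$ follows routinely and contradicts $I_{a_j}(u_{a_j})\ge \be$.

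The paper's proof is different in structure: it is direct rather than by contradiction. It first proves a \emph{uniform} lower bound $\norm{\De u_a}_2^2 \ge 2\be_0>0$ for $a<\tilde a_2$ (from $I_a(u_a)\ge \be$ and $F_a(t)\ge -a|t|$), and then combines the Nehari identity with the growth bound $|f_a(t)|\le C(1+|t|^{2^{**}-1})$ to estimate
\[
\be_0 \le \tfrac12\intRn g|f_a(u_a)u_a| \le C\norm{g}_1\bigl(\norm{u_a}_\infty + \norm{u_a}_\infty^{2^{**}}\bigr),
\]
which forces $\norm{u_a}_\infty\ge \be_1$. So the paper uses the \emph{upper} growth of $f$ at infinity, whereas you use the \emph{smallness} of $f$ near $0$ from \ref{f1}. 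A side benefit of your route is that it does not actually need the $L^\infty$ regularity from Proposition~\ref{Regularity1} or hypothesis \ref{g2}: the contradiction hypothesis $\norm{u_{a_j}}_\infty\to 0$ already supplies the $L^\infty$ control you use. The paper's argument, on the other hand, yields the intermediate fact $\norm{\De u_a}_2\ge \sqrt{2\be_0}$ uniformly, which may be of independent interest.
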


\begin{proof}
By the definition, $F_a(t) \ge -a|t|$, for all $t \in \R$. For $\beta$ as given in Lemma \ref{MPG} and using \eqref{MP2} we see that $I_a(u_a) \ge \be$, for all $a \in (0,a_2)$. Hence using Proposition \ref{bound solution}, we get for all $a < a_2$, 
\begin{align*}
    \frac{\norm{\De u_a}^2_2}{2} = I_a(u_a) + \intRn gF_a(u_a) \ge \be - a\intRn g|u_a| \ge \be - aC \left(\norm{g}_1 \norm{g}_{\frac{N}{4}} \right)^{\frac{1}{2}}.
\end{align*}
Choose $0 < \tilde{a}_2 < \min \left\{ \be C^{-1} \left(\norm{g}_1 \norm{g}_{\frac{N}{4}} \right)^{-\frac{1}{2}}, a_2 \right\}$. Then 
$$ \frac{\norm{\De u_a}^2_2}{2}  \ge \be_0 :=\be - \tilde{a}_2 C\left(\norm{g}_1 \norm{g}_{\frac{N}{4}}\right)^{\frac{1}{2}}>0, \, \forall \, a \in (0,\tilde{a}_2).$$ 
Hence using $|f_a(u_a)| \le C (1+ |u_a|^{2^{**}-1})$ (Remark \ref{bound}-$(i)$) and Proposition \ref{Regularity1}, we get
\begin{align*}
    \be_0 \le \frac{1}{2}\intRn g \abs{f_a(u_a)u_a} & \le C \intRn g \left( \abs{u_a} + |u_a|^{2^{**}} \right)  \le C \norm{g}_1 \left( \norm{u_a}_{\infty} + \norm{u_a}^{2^{**}}_{\infty} \right), 
\end{align*}
where $C$ does not depend on $a$. Therefore, there exists $\be_1 >0$ such that $\norm{u_a}_{\infty} \ge \be_1$, for all $ a \in (0,\tilde{a}_2)$.
\end{proof}

\subsection{Existence of positive solutions}
In this section, we prove the existence of positive solutions for \eqref{SP}. Let $(a_j)$ be a sequence in $\R^+$. Recall the following functional
\begin{align*}
  I_{a_j}(u) = \frac{1}{2} \intRn (\De u)^2 - N_{a_j}(u), \; \forall \, u \in \Dtwo.  
\end{align*}

\begin{definition}
A sequence $(u_j)$ in $\D^{2,2}(\RN)$ is called a Palais-Smale sequence (PS-sequence) for $I_{a_j}$, if $I_{a_j}(u_j) \rightarrow c \in \R$ and $I_{a_j}'(u_j) \rightarrow 0$ in $(\D^{2,2}(\RN))'$, as $j \ra \infty$.
\end{definition}

\begin{remark}\label{PS sequence}
Let $a_j \in (0,a_1)$.  By Theorem \ref{Existence}, $I_{a_j}'(u_{a_j}) = 0$, and from Lemma \ref{bound1} up to a subsequence $I_{a_j}(u_{a_j}) \ra c \in \R$. Hence $(u_{a_j})$ is a PS-sequence for $I_{a_j}$. Further, from Proposition \ref{bound solution}, the PS-sequence $(u_{a_j})$ is bounded in $\Dtwo$. 
\end{remark}

Now we are ready to obtain the positivity of solutions. Our proof broadly follows the idea used in \cite[Theorem 1.1]{AHS20}.

\begin{theorem}\label{positive}
Let $f$ satisfies {\rm \ref{f1}-\ref{f3}} and let $g$ be positive and satisfies {\rm \ref{g1}-\ref{g2}}. Let $u_a \in \M$ for all $a \in (0, \tilde{a}_2)$. Then there exists $a_3 \in (0,\tilde{a}_2)$ such that for each $a \in (0, a_3)$, $u_a \ge 0$ in $\RN$. Further, if $f$ satisfies {\rm \ref{f4}}, then $u_a >0$ a.e. on $\RN$. 
\end{theorem}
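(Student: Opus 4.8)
The plan is to argue by contradiction along a sequence $a_j\ra0$, extract a limit $u_0$, identify it as a positive solution of the positone problem $\De^2u_0=gf_0(u_0)$ in $\RN$ with $f_0(t):=f(t)$ for $t\ge0$ and $f_0(t):=0$ for $t<0$, and then play the decay condition \ref{g2} against the $-a$ defect through the Riesz representation of Proposition~\ref{fundamental}; this follows the scheme of \cite[Theorem~1.1]{AHS20}. Suppose the first assertion fails: there is a sequence $(a_j)$ in $(0,\tilde{a}_2)$ with $a_j\ra0$ and $\abs{\{u_{a_j}<0\}}>0$ for every $j$. By Proposition~\ref{bound solution} the sequence $(u_{a_j})$ is bounded in $\Dtwo$, so up to a subsequence $u_{a_j}\rightharpoonup u_0$ in $\Dtwo$, strongly in $L^\ga_{loc}(\RN)$ for $\ga\in[1,2^{**})$ and a.e.\ in $\RN$. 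Since $\langle I_{a_j}'(u_{a_j}),u_{a_j}\rangle=0$ we have $\norm{\De u_{a_j}}_2^2=\intRn gf_{a_j}(u_{a_j})u_{a_j}$; arguing as in the proof of Proposition~\ref{CC1} together with $a_j\ra0$ (cf.~Proposition~\ref{compact map}(iii)), the right-hand side converges to $\intRn gf_0(u_0)u_0$, and passing to the limit in the weak formulation shows $u_0$ solves the positone problem, so $\intRn gf_0(u_0)u_0=\norm{\De u_0}_2^2$ and hence $u_{a_j}\ra u_0$ strongly in $\Dtwo$. Therefore $I_0(u_0)=\lim_j I_{a_j}(u_{a_j})\ge\be>0$, so $u_0\not\equiv0$. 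Re-running the splitting of Proposition~\ref{Regularity1} for $u_{a_j}-u_0$ (inner ball via \ref{g2} with $\de=2^{**}$, exterior via \ref{g1}, using $f_{a_j}(u_{a_j})\ra f_0(u_0)$) gives $u_{a_j}\ra u_0$ uniformly on $B_1^c$, and with the uniform local $C^{3,\al}$ bounds (from Proposition~\ref{Regularity2} and $\norm{\De u_{a_j}}_2\le C$, see Remark~\ref{bounded1.1}) even $u_{a_j}\ra u_0$ in $L^\infty(\RN)$; in particular $\norm{u_0}_\infty\ge\be_1>0$ by Proposition~\ref{lowerbound}.

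The next step is to show that the limit is positive, with a decay rate: using $u_0(x)=\mathcal{R}_4\intRn|x-y|^{4-N}g(y)f_0(u_0(y))\,\dy$, the strict positivity of the kernel, $g>0$, and \ref{f1}--\ref{f3} together with $\norm{u_0}_\infty\ge\be_1$, I would deduce $u_0>0$ on $\RN$ and $\lim_{|x|\to\infty}|x|^{N-4}u_0(x)=\mathcal{R}_4\intRn gf_0(u_0)>0$, so that $u_0(x)\ge c_0|x|^{4-N}$ for $|x|\ge1$ and some $c_0>0$. Then the contradiction closes as follows. Fix $\rho\ge1$ large. On $\overline{B_\rho}$, $u_{a_j}\ra u_0>0$ uniformly, so $u_{a_j}>0$ there for $j$ large. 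On $B_\rho^c$, since $f_{a_j}=f_0-a_j$, Proposition~\ref{fundamental} gives
\begin{align*}
u_{a_j}(x)=\mathcal{R}_4\intRn\frac{g(y)f_0(u_{a_j}(y))}{|x-y|^{N-4}}\,\dy-a_j\,\mathcal{R}_4\intRn\frac{g(y)}{|x-y|^{N-4}}\,\dy;
\end{align*}
by \ref{g2} with $\de=1$ the last term is $\le a_j\mathcal{R}_4C_g|x|^{4-N}$, while $\norm{f_0(u_{a_j})-f_0(u_0)}_\infty\ra0$ and \ref{g2} give $\mathcal{R}_4\intRn\frac{g\,f_0(u_{a_j})}{|x-y|^{N-4}}\,\dy\ge u_0(x)-\mathcal{R}_4C_g\norm{f_0(u_{a_j})-f_0(u_0)}_\infty|x|^{4-N}\ge\tfrac{c_0}{2}|x|^{4-N}$ for $|x|\ge\rho$ and $j$ large. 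Hence $u_{a_j}(x)\ge\big(\tfrac{c_0}{2}-a_j\mathcal{R}_4C_g\big)|x|^{4-N}>0$ on $B_\rho^c$ for $j$ large, so $u_{a_j}\ge0$ on $\RN$, contradicting the choice of the sequence. This yields $a_3\in(0,\tilde{a}_2)$ with $u_a\ge0$ on $\RN$ for all $a\in(0,a_3)$.

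For the strict positivity under \ref{f4}: fix $a\in(0,a_3)$, so $u_a\ge0$ and $u_a\not\equiv0$ (Proposition~\ref{lowerbound}), and $u_a\in W^{4,q}_{loc}(\RN)$ (Proposition~\ref{Regularity2}). On the set $Z:=\{u_a=0\}$ all derivatives of $u_a$ of order $\le4$ vanish a.e., so $\De^2u_a=0$ a.e.\ on $Z$, whereas $gf_a(u_a)=g(f(0)-a)=-ag<0$ a.e.\ on $Z$ because $g>0$. Therefore $\abs{Z}=0$, i.e.\ $u_a>0$ a.e.\ on $\RN$; here \ref{f4} is what provides the regularity needed to justify this truncation argument.

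The step I expect to be the genuine obstacle is the positivity of the limiting positone solution together with the quantitative lower bound $u_0(x)\ge c_0|x|^{4-N}$ — equivalently, the strict inequality $\intRn gf_0(u_0)>0$ and its uniform-in-$j$ transfer to the potentials of $gf_0(u_{a_j})$. It is precisely here that the positivity of the biharmonic fundamental solution and the decay condition \ref{g2} must be combined to beat the $-a$ perturbation near infinity: without the bound $u_0(x)\gtrsim|x|^{4-N}$, the estimate in the previous paragraph only produces $u_{a_j}(x)\ge-\delta_j|x|^{4-N}$ with $\delta_j\ra0$, which is not enough to conclude $u_{a_j}\ge0$.
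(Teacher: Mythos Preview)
Your overall scheme is the paper's (both adapt \cite{AHS20}): pass $a_j\to0$, extract a limit $\tilde u$ solving the positone problem $\De^2\tilde u=gf_0(\tilde u)$, prove $\tilde u>0$ with decay $\tilde u(x)\gtrsim|x|^{4-N}$, and compare $u_{a_j}$ with $\tilde u$ through the Riesz representation and \ref{g2}. Two technical steps differ. For uniform convergence on $\overline{B_1}$ you invoke uniform local $C^{3,\al}$ bounds, which needs a quantitative form of Theorem~\ref{Mederski} that is not stated; the paper instead applies Egorov's theorem on $\overline{B_1}$ to obtain uniform convergence on a full-measure subset $A\subset\overline{B_1}$, and then the continuity of each $u_j$ (Remark~\ref{bounded1.1}) upgrades ``$u_j\ge0$ on $B_1^c\cup A$'' to ``$u_j\ge0$ on $\RN$''. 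For strict positivity, the paper uses \ref{f4} to bound $|f_j(u_j)-f_j(\tilde u)|\le M|u_j-\tilde u|$ and deduce $\sup_{B_1^c}|x|^{N-4}|u_j-\tilde u|\to0$; your Stampacchia-type argument (on $Z=\{u_a=0\}$ one has $\De^2u_a=0$ a.e., while the equation forces $\De^2u_a=-ag<0$) is genuinely different and more elementary --- and in fact it does \emph{not} use \ref{f4}, since the $W^{4,q}_{loc}$ regularity already follows from Proposition~\ref{Regularity2}. Likewise, your decay argument in the second paragraph controls $\|f_0(u_{a_j})-f_0(u_0)\|_\infty$ via uniform continuity of $f_0$ on a common $L^\infty$-bound rather than Lipschitz continuity, and so already yields $u_{a_j}>0$ (not merely $\ge0$) without \ref{f4}, provided the $L^\infty(\RN)$-convergence is in place.

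The point you single out as the obstacle --- strict positivity of the limit and the quantitative lower bound $u_0(x)\ge c_0|x|^{4-N}$ --- is resolved in the paper not by squeezing the Riesz formula for $u_0$ directly, but by the \emph{second} identity in Proposition~\ref{fundamental}: one gets
\[
-\De u_0(x)=\mathcal{R}_2\intRn\frac{g(y)f_0(u_0(y))}{|x-y|^{N-2}}\,\dy\ge0 \quad \text{a.e. in }\RN,
\]
and since $u_0\in W^{1,2}_{loc}(\RN)$ (Proposition~\ref{local}), the strong maximum principle \cite[Proposition~3.2]{KLP07} gives the dichotomy $u_0\equiv0$ or $u_0>0$; the first alternative is ruled out by passing Proposition~\ref{lowerbound} to the limit. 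The decay $\liminf_{|x|\to\infty}|x|^{N-4}u_0(x)\ge\mathcal{R}_4\intRn g f_0(u_0)>0$ then follows by dominated convergence in the Riesz formula, exactly as you outline. So the ingredient your sketch is missing at that step is the detour through superharmonicity of $u_0$ and the second-order strong maximum principle, rather than an attempt to read positivity off the biharmonic potential alone.
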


\begin{proof}
Let $(a_j)$ be a sequence in $(0,a_3)$ such that $a_j \ra 0$ as $j \ra \infty$. By Theorem \ref{Existence}, there exists mountain pass solution $u_{a_j} \in \Dtwo$ (we denote by $u_j$) for each $a_j$. It is enough to prove the existence of $\tilde{j} \in \N$ such that for each $j \ge \tilde{j}$, $u_j$ is strictly positive on $\RN$. From Remark \ref{PS sequence},  $(u_j)$ is a bounded PS-sequence in $\Dtwo$. By the reflexivity, $u_j \rightharpoonup u$ in $\Dtwo$. Now using the boundedness of $(a_j)$ and the similar set of arguments as given in Proposition \ref{CC1}, we obtain that $u_j \ra \tilde{u}$ in $\Dtwo$. Up to a subsequence $u_j(x) \ra \tilde{u}(x)$ a.e. in  $\RN$. Now we split the rest of our proof into three steps. In the first step, we prove that  $\tilde{u}$ is nonnegative and $(u_j)$ converges uniformly to $\tilde{u}$ except on a measure zero set in $\RN$. We obtain the non negativity of $u_j$ in Step 2. The last step provides the positivity of $u_j$ for large $j$. 

\noi  \textbf{\underline{Step 1:}} From Proposition \ref{Regularity1}, for each $j \in \N$, $u_j \in L^{\infty}(\RN)$. Set $r = (2^{**})'$. We denote $f_{a_j}$ by $f_j$. From Remark \ref{bound}-$(i)$, $f_j(u_j) \le C(1+|u_j|^{2^{**}-1})$. Hence using  Proposition \ref{bound solution}, we get
 \begin{align*}
    \intRn g^r |f_j(u_j)|^r \le C 2^{r-1} \intRn g^r\left( 1+ |u_j|^{2^{**}} \right) \le C,
\end{align*}
where $C$ does not depend on $j$. By the reflexivity of $L^r(\RN)$, let $g f_j(u_j) \rightharpoonup z$ in $L^r(\RN)$. We consider the following function:
\begin{align*}
    f_0(t) = \left\{\begin{array}{ll} 
            f(t) , & \text {if }  t \ge 0; \\ 
            0, & \text{if} \; t \le 0.  \\
             \end{array} \right.
\end{align*}
Notice that $f_j(t) \ra f_0(t)$, as $j \ra \infty$. Now 
\begin{align}\label{triangle}
    \abs{f_j(u_j)-f_0(\tilde{u})} \le \abs{f_j(u_j)-f_0(u_j)} + \abs{f_0(u_j)-f_0(\tilde{u})}.
\end{align}
Since $u_j(x) \ra \tilde{u}(x)$ a.e. in $\RN$ and $f_0 \in C(\R)$, we have $f_0(u_j(x)) \ra f_0(\tilde{u}(x))$ a.e. in $\RN$, as $j \ra \infty$. Moreover, since $\abs{f_j(u_j)-f_0(u_j)} \le a_j$, from \eqref{triangle} it follows that $f_j(u_j(x)) \ra f_0(\tilde{u}(x))$ a.e. in $\RN$, as $j \ra \infty$. Now applying the Mazur's lemma, we obtain $z= g(|x|) f_0(\tilde{u}(x))$ a.e. on $\RN$. Therefore, using $u_j \in \M$, we have the following identity for every $\phi \in \C_c^{\infty}(\RN)$:
\begin{align*}
    \intRn \De \tilde{u} \, \De \phi = \lim_{j \ra \infty} \intRn \De u_{j} \, \De \phi = \lim_{j \ra \infty} \intRn g f_j(u_j) \phi = \intRn z\phi = \intRn gf_0(\tilde{u}) \phi.
\end{align*}
Thus, by the density argument, $\tilde{u}$ is a weak solution of the following problem:
\begin{equation}\label{P2}
\begin{aligned} 
 \De^2 u =  g(x)f_0(u) \text{ in } \RN, \quad u \in \Dtwo.
 \end{aligned}
 \end{equation}
Therefore, using Remark \ref{Riesz0.1} and Proposition \ref{fundamental},
 \begin{align}\label{Riesz5}
     \tilde{u}(x) = \mathcal{R}_4 \intRn \frac{g(y)f_0(\tilde{u}(y))}{|x-y|^{N-4}} \, \dy \ge 0 \; \text{ a.e. in } \RN.
 \end{align}
Further, using the similar set of arguments as given in Proposition \ref{Regularity1} and Proposition \ref{Regularity2}, we get $\tilde{u} \in L^{\infty}(\RN) \cap C(\RN)$. Moreover, since $u_j \in \M$, we have 
\begin{align*}
    u_j(x) = \mathcal{R}_4 \intRn \frac{g(y)f_j(u_j(y))}{|x-y|^{N-4}} \, \dy  \text{ a.e. in } \RN.
\end{align*}
Therefore,
\begin{align}\label{Riesz3}
    \abs{u_j(x) - \tilde{u}(x)} \le \mathcal{R}_4 \intRn \frac{g(y)\abs{f_j(u_j(y))-f_0(\tilde{u}(y))}}{|x-y|^{N-4}}  \, \dy. 
\end{align}
For every $x\in B_1^c$, H\"{o}lder's inequality with the conjugate pair $(\de,\de')$ (where $\de =2^{**}$) and the condition \ref{g2} yield
\begin{align*}
    & \int_{B_1(x)} \frac{g(y) \abs{f_j(u_j(y))-f_0(\tilde{u}(y))}}{|x-y|^{N-4}} \, \dy \\
    & \le \left( \int_{B_1(x)} \frac{g(y)^{\de}}{|x-y|^{(N-4)\de}}  \, \dy\right)^{\frac{1}{\de}} \left( \int_{B_1(x)} \left( \abs{f_j(u_j(y))-f_0(\tilde{u}(y))} \right)^{\de'} \, \dy \right)^{\frac{1}{\de'}} \\
    & \le C_g^{\frac{1}{\de}} \left( \int_{B_1(x)} \left( \abs{f_j(u_j(y))-f_0(\tilde{u}(y))} \right)^{\de'} \, \dy \right)^{\frac{1}{\de'}}.
\end{align*}
Moreover, 
\begin{align}\label{gdominated}
     \abs{f_j(u_j(y))-f_0(\tilde{u}(y))}^{\de'} \le  2^{\de'-1} \left( a_j^{\de'} + \abs{f_0(u_j)-f_0(\tilde{u})}^{\de'} \right).
\end{align}
Now we show $\int_{B_1(x)} \abs{f_0(u_j)-f_0(\tilde{u})}^{\de'} \ra 0$ as $j\ra \infty$. Observe that $f_0(t) \le C\left(1+|t|^{2^{**}-1} \right)$ for all $t\in \R$ (from Remark \ref{bound}-$(i)$), where $C=C(C_f,t_2)$. Hence
\begin{align*}
    \abs{f_0(u_j)-f_0(\tilde{u})}^{\de'} \le 2^{\de'-1}\left( |f_0(u_j)|^{\de'} + |f_0(\tilde{u})|^{\de'} \right) & \le  C 2^{2(\de'-1)} \left(2+  |u_j|^{2^{**}} + |\tilde{u}|^{2^{**}} \right). 
\end{align*}
Further, using $\Dtwo \hookrightarrow L^{2^{**}}(\RN)$ we get
\begin{align*}
   \lim_{j \ra \infty} \int_{B_1(x)} \left(2+  |u_j(y)|^{2^{**}} + |\tilde{u}(y)|^{2^{**}} \right) \, \dy = 2 \int_{B_1(x)} \left( 1 + |\tilde{u}(y)|^{2^{**}}  \right) \, \dy.
\end{align*}
Therefore, by the generalized dominated convergence theorem, $$\lim_{j \ra \infty} \int_{B_1(x)} \abs{f_0(u_j(y))-f_0(\tilde{u}(y))}^{\de'} \, \dy = 0.$$
Now from \eqref{gdominated} and again using the generalized dominated convergence theorem, 
$$\lim_{j \ra \infty} \int_{B_1(x)} \abs{f_j(u_j(y))-f_0(\tilde{u}(y))}^{\de'} \, \dy = 0.$$ 
Using the similar estimate as given in the proof of Proposition \ref{Regularity1}, we can also show that 
$$\lim_{j \ra \infty} \int_{B_1^c(x)} g(y) \abs{f_j(u_j(y))-f_0(\tilde{u}(y))} \, \dy = 0.$$
Therefore, from \eqref{Riesz3} we conclude $u_j \ra \tilde{u}$ in $L^{\infty}(B_1^c)$. Moreover, $u_j(x) \ra \tilde{u}(x)$ (up to a subsequence) a.e. in $\overline{B_1}$. Hence by the Egorov's theorem, there exists $A \subset \overline{B_1}$ with $|\overline{B_1} \setminus A|=0$ such that $\norm{u_j-\tilde{u}}_{L^{\infty}(A)} \ra 0$. Thus, $u_j \ra \tilde{u}$ uniformly on $B^c_1 \cup A$ where $|(B^c_1 \cup A)^c|=0$. 

\noi  \textbf{\underline{Step 2:}} In this step, we show that $\tilde{u} >0$ on $\RN$. Then the non negativity of $u_j$ follows from the uniform convergence of $u_j$ into $\tilde{u}$. Since $\tilde{u}$ is a solution of \eqref{P2}, we have $\tilde{u} \ge 0$ a.e. in $\RN$ and also using Proposition \ref{fundamental},
\begin{align*}
    -\De \tilde{u}(x) = \mathcal{R}_2  \intRn \frac{g(y)f_0(\tilde{u}(y))}{|x-y|^{N-2}} \, \dy \; \text{ a.e. in } \RN. 
\end{align*}
Therefore, $-\De \tilde{u}  \geq 0$ a.e. in $\RN$. Moreover, from Proposition \ref{local}-$(ii)$ we have $\tilde{u} \in W_{loc}^{1,2}(\RN)$. Now, using the strong maximum principle \cite[Proposition 3.2]{KLP07}, we infer that 
\begin{align}\label{STM1}
    \text{ either }  \tilde{u} \equiv 0   \text{ or }  \tilde{u}>0  \text{ on } \RN.        \end{align}
Since $|(B^c_1 \cup A)^c|=0$, from Proposition \ref{lowerbound} there exists a positive constant $\be_2$ (independent of $j$) such that $\norm{u_j}_{L^{\infty}(B^c_1 \cup A)} \ge \be_2$ for all $j \in \N$. Further, since $u_j \ra \tilde{u}$ in $L^{\infty}(B^c_1 \cup A)$ (see Step 1), $\norm{\tilde{u}}_{L^{\infty}(B^c_1 \cup A)} \ge \be_3$ for some positive constant $\be_3$. Therefore, using \eqref{STM1} we must have $\tilde{u}>0$ on $\RN$. Thus, again from the uniform convergence of $(u_j)$ to $\tilde{u}$, there exists $j_1 \in \N$ such that for $j \ge j_1$, $u_j \ge 0$ on $B^c_1 \cup A$. Moreover, since $u_j \in C(\RN)$ we get  $u_j \ge 0$ on $\RN$.

\noi  \textbf{\underline{Step 3:}}  Since $f_j$ is locally Lipschitz (from \ref{f4}) and $0 \le u_j, u \le C$, we have $|f_j(u_j(y)) - f_j(\tilde{u}(y))| \le M |u_j(y)-\tilde{u}(y)|$ for some $M>0$.  Using \eqref{Riesz3} for every $x\in B^c_1$ we write
\begin{align*}
     \abs{u_j(x) - \tilde{u}(x)} & \le M \mathcal{R}_4  \intRn \frac{g(y)|u_j(y)-\tilde{u}(y)|}{|x-y|^{N-4}} \, \dy + a_j \mathcal{R}_4 \intRn \frac{g(y)}{|x-y|^{N-4}} \, \dy.
\end{align*}
Now 
\begin{align*}
  \intRn \frac{g(y)|u_j(y)-\tilde{u}(y)|}{|x-y|^{N-4}} \, \dy \le \norm{u_j-\tilde{u}}_{\infty} \intRn \frac{g(y)}{|x-y|^{N-4}} \, \dy.
\end{align*}
Therefore, using \ref{g2},
\begin{equation}\label{Riesz4}
    \begin{split}
        \abs{u_j(x) - \tilde{u}(x)} & \le \mathcal{R}_4 \left( M \norm{u_j-\tilde{u}}_{\infty} + a_j \right) \intRn \frac{g(y)}{|x-y|^{N-4}} \, \dy \\
        & \le \mathcal{R}_4 \left( M \norm{u_j-\tilde{u}}_{\infty} + a_j \right) \frac{C_g}{|x|^{N-4}}.
    \end{split}
\end{equation}
Hence 
\begin{align}\label{limit}
    \underset{{x \in B^c_1}}{\sup}\left\{ |x|^{N-4} \abs{u_j(x) - \tilde{u}(x)} \right\} \ra 0, \text{ as } j \ra \infty.
\end{align}
Now we show that $\underset{|x| \ra \infty}{\lim} |x|^{N-4} \tilde{u}(x)>0$. Using \eqref{Riesz5} we get
\begin{equation}\label{Riesz9}
    \begin{split}
        \lim_{|x| \ra \infty} |x|^{N-4} \tilde{u}(x) & = \mathcal{R}_4 \lim_{|x| \ra \infty} \intRn \frac{g(y)f_0(\tilde{u}(y))|x|^{N-4}}{|x-y|^{N-4}} \, \dy  \\
    & \ge \mathcal{R}_4 \lim_{|x| \ra \infty}  \int_{B_R} \frac{g(y)f_0(\tilde{u}(y))|x|^{N-4}}{|x-y|^{N-4}} \, \dy,
    \end{split}
\end{equation}
for any $R>0$. Choose $R>0$ arbitrarily. Then there exists $x \in \RN$ such that $|x|>2R+1$. Hence 
\begin{align*}
    |x-y|^{N-4} \ge \abs{|x|-|y|}^{N-4} \ge \abs{|x|-R}^{N-4} \ge 2^{4-N} \left(1+ \abs{x} \right)^{N-4}, \; \text{ for } y \in B_R.
\end{align*}
Using the above estimate, for $y \in B_R$ we get
\begin{align*}
    \frac{g(y)f_0(\tilde{u}(y))|x|^{N-4}}{|x-y|^{N-4}} \le 2^{N-4} \frac{g(y)f_0(\tilde{u}(y))|x|^{N-4}}{\left(1+ \abs{x} \right)^{N-4}} \le 2^{N-4} g(y)f_0(\tilde{u}(y)). 
\end{align*}
Further, $\frac{g(y)f_0(\tilde{u})|x|^{N-4}}{|x-y|^{N-4}} \ra g(y)f_0(\tilde{u})$ a.e. in $B_R$, as $|x| \ra \infty$. Therefore, by the dominated convergence theorem, 
\begin{align*}
    \lim_{|x| \ra \infty}  \int_{B_R} \frac{g(y)f_0(\tilde{u}(y))|x|^{N-4}}{|x-y|^{N-4}} \, \dy = \int_{B_R} g(y)f_0(\tilde{u}(y)) \, \dy.
\end{align*}
Hence from \eqref{Riesz9} we conclude that 
\begin{align*}
    \lim_{|x| \ra \infty} |x|^{N-4} \tilde{u}(x) \ge \mathcal{R}_4 \int_{B_R} g(y)f_0(\tilde{u}(y)) \, \dy.
\end{align*}
Since $R>0$ is arbitrary, using Fatous lemma we have
\begin{align*}
    \lim_{|x| \ra \infty} |x|^{N-4} \tilde{u}(x) \ge \mathcal{R}_4 \int_{\R^N} g(y)f_0(\tilde{u}(y)) \, \dy.
\end{align*}
Further, since $\tilde{u}>0$, from \eqref{Riesz5}
it follows that $gf_0(\tilde{u}) \gneqq 0$ on $\R^N$. Hence,  $\lim_{|x| \ra \infty} |x|^{N-4} \tilde{u}(x) >0.$
Therefore, from \eqref{limit} there exist $j_2 \in \N$ and $R_1>>1$ such that for $j \ge j_2$, $u_j > 0$ on $B_{R_1}^c$. Since $\tilde{u}, u_j \in C(\RN)$, using Step 1 we have $u_j \ra \tilde{u}$ in  $C((\overline{B_{R_1}} \setminus B_1) \cup A)$  with respect to the sup norm, and by Step 2, $\tilde{u} > \eta$ on $\overline{B_{R_1}}$ for some $\eta >0$. Further, note that $(\overline{B_{R_1}} \setminus B_1) \cup A \subset \overline{B_{R_1}}$.  Hence there exists $j_3 \in \N$ such that for $j \ge j_3$, $u_j>0$ on $(\overline{B_{R_1}}\setminus B_1) \cup A$. Set $\tilde{j}= \max\{ j_2,j_3\}$. Therefore, for $j \ge \tilde{j},$ $u_j>0$ on $B_1^c \cup A$ where $|(B^c_1 \cup A)^c|=0$. Thus, for $j \ge \tilde{j},$ $u_j>0$ a.e. on $\RN$. This completes our proof.
\end{proof}

\noi \textbf{Proof of Theorem \ref{result}:} Proof of $(i)$ follows combining Theorem \ref{Existence} and Proposition \ref{Regularity2}. Proof of $(ii)$ follows using Proposition \ref{bound solution} and Proposition \ref{Regularity1}. Proof of $(iii)$ follows from Theorem \ref{positive}.

\begin{example}\label{f and g}
Let $N \ge 5$ and $\de \ge 1$. Consider the following functions 
\begin{align*}
    f(t)=2t\ln (1+|t|); \quad g(y)= \chi_{A} |y|^{-d} \text{ where } \overline{A} \subset B_1 \setminus B_{\frac{1}{2}}, d>0.
\end{align*}
$(i)$ One can verify that $f$ satisfies \ref{f1}-\ref{f4}. \\
\noi $(ii)$ It is easy to see $g \in L^1(\RN) \cap L^{\infty}(\RN)$. Let $x \in B_1^c$ and $y \in A$. Then $|x-y| \ge d(A,B_1^c)>0$. Hence
\begin{align*}
    |x|^{(N-4) \de} \intRn \frac{g(y)^{\de}}{|x-y|^{(N-4)\de}} \, \dy & \le \int_{A} \frac{|y|^{-d \de}}{|x-y|^{(N-4)\de}} \, \dy \\
    & \le {d(A,B_1^c)}^{(4-N)\de} \int_{B_1 \setminus B_{\frac{1}{2}}} |y|^{-d \de} \, \dy \\
    & \le C.
\end{align*}
Thus $g$ satisfies \ref{g2}.
\end{example}

\begin{remark}
$(i)$ Let $g \in L^1(\RN) \cap L^{\infty}(\RN)$. Suppose $g$ satisfies the following assumption:
\begin{align}\label{g2a}
    |x|^{N-4} \intRn \frac{g(y)}{|x-y|^{N-4}} \, \dy \le C_g, \quad \text{for } x \in \RN \setminus \{ 0 \}, \text{ where } C_g>0.
\end{align}
Observe that, unlike the condition \ref{g2}, here we take $\de =1$ and domain as $\RN \setminus \{ 0 \}$. In this case, if $f$ satisfies \ref{f1} for $\ga < 2^{**}-1$, and \ref{f2}-\ref{f4}, then using the similar set of arguments as given in this article we can show that $u_a$ is uniformly bounded in $L^{\infty}(\RN)$ for all $a \in (0,a_2)$, and $u_a$ is positive on $\RN$ for all $a \in (0, a_3)$. 

$(ii)$ We stress that if $g$ satisfies \eqref{g2a} and $f$ satisfies \ref{f1} for $\ga \in [2^{**}-1, 2^{**})$, then the arguments given in Proposition \ref{Regularity1} fail to provide the uniform boundedness of $u_a$ (required for our proof of the positivity of $u_a$). Also, in order to consider $\ga \in (2,2^{**}-1 )$, we first require $2<2^{**}-1$ which holds only when $N < 12$ (as $N \ge 5$). Thus, under the assumption \eqref{g2a} we get the positivity of solutions to \eqref{SP} for restricted dimensions. 
\end{remark}

\section*{Acknowledgement}
The third author was supported by the DST-INSPIRE Grant DST/INSPIRE/04/2018/002208. Part of the research of the first and second authors was supported by the DST-INSPIRE Research Grant DST/INSPIRE/04/2018/002208 of A.S. The second author also acknowledges the support of the Israel Science Foundation (grant 637/19) founded by the Israel Academy of Sciences and Humanities.

\bibliographystyle{plainurl}

\begin{enumerate}
    \item[E-mail:] nirjan22@tifrbng.res.in
    \item[E-mail:] ujjal.rupam.das@gmail.com
    \item[E-mail:] abhisheks@iitj.ac.in
\end{enumerate}
\end{document}